\newtheorem{lemma}{Lemma}[section]
\newtheorem{theorem}{Theorem}[section]
\newtheorem{proposition}{Proposition}[section]
\newtheorem{remark}{Remark}[section]
\newtheorem{corollary}{Corollary}[section]
\numberwithin{equation}{section}
\newcommand{\dis}{\displaystyle}
\newcommand{\C}{\mathbb{C}}
\newcommand{\R}{\mathbb{R}}
\newcommand{\FI}{\mathbf{I}}
\newcommand{\FO}{\mathbf{O}}
\newcommand{\CD}{\mathcal{D}}
\newcommand{\CE}{\mathcal{E}}
\newcommand{\FR}{\mathfrak{R}}
\newcommand{\na}{\nabla}
\newcommand{\al}{\alpha}
\newcommand{\be}{\beta}
\newcommand{\ga}{\gamma}
\newcommand{\om}{\omega}
\newcommand{\la}{\lambda}
\newcommand{\de}{\delta}
\newcommand{\si}{\sigma}
\newcommand{\pa}{\partial}
\newcommand{\eps}{\epsilon}
\newcommand{\De}{\Delta}
\newcommand{\lng}{{\langle}}
\newcommand{\rng}{{\rangle}}
\begin{document}

%%%%%%%%%%%%%%%%%%%%% Publisher's Area please ignore %%%%%%%%%%%%%%%
%
%\catchline{}{}{}{}{}
%
%%%%%%%%%%%%%%%%%%%%%%%%%%%%%%%%%%%%%%%%%%%%%%%%%%%%%%%%%%%%%%%%%%%%

\title[The Compressible Euler-Maxwell System: Relaxation Case]{GLOBAL SMOOTH FLOWS FOR THE COMPRESSIBLE EULER-MAXWELL SYSTEM: RELAXATION CASE}

\author[R.-J. Duan]{Renjun Duan}
\address{ (RJD)
Department of Mathematics, The Chinese University of Hong Kong, Shatin\\
Hong Kong}
\email{rjduan@math.cuhk.edu.hk}

%\author{SECOND AUTHOR}

%\address{Group, Laboratory, Address\\
%City, State ZIP/Zone, Country\\
%author\_id@domain\_name }

\maketitle

%\begin{history}
%\received{(Day Mth. Year)}
%\revised{(Day Mth. Year)}
%\comby{[editor]}
%\end{history}

\begin{abstract}
The Euler-Maxwell system as a hydrodynamic model for plasma physics
to describe the dynamics of the compressible electrons  in a
constant charged non-moving ion background is studied. The global
smooth flow with small amplitude is constructed in three space
dimensions when the electron velocity relaxation is present. The
speed of the electrons flow trending to uniform equilibrium is
obtained. The pointwise behavior of solutions to the linearized homogeneous
system in the frequency space is also investigated in detail.
\end{abstract}

\medskip

Keywords: Euler-Maxwell system; global existence; large time behavior.

\section{Introduction}
The Euler-Maxwell system is a hydrodynamic model in plasma physics
to describe the dynamics of electrons and ions under the influence
of their self-consistent electromagnetic field \cite{RG,MRS}. Starting from the Euler-Maxwell system, some hierarchies of models such as the Dynamo hierarchy and the MHD hierarchy can be derived under the different situations about the state of the plasma \cite{BCD}. The Euler-Maxwell system in some cases can also be justified as the asymptotic limit of the kinetic Vlasov-Maxwell system by the so-called quasi-neutral regime \cite{BMP}.  In a
simple case when the constant positive charged ions do not move
providing only a uniform background and the electrons flow is
isentropic, the compressible Euler-Maxwell system takes the form of
\begin{equation}\label{s.o}
\left\{\begin{array}{l}
  \dis \pa_t n+\na \cdot (n u)=0,\\[3mm]
  \dis \pa_t u+ u\cdot \na u +\frac{1}{n}\na p(n)=-(E+u\times B)-\nu u,\\[3mm]
  \dis \pa_t E-\na \times B= nu,\\[3mm]
  \dis \pa_t B+\na \times E=0,\\[3mm]
  \dis \na\cdot E=n_{\rm b}-n,\ \ \na\cdot B=0.
\end{array}\right.
\end{equation}
Here, $n=n(t,x)\geq 0$, $u=u(t,x)\in \R^3$, $E=E(t,x)\in \R^3$ and $B=B(t,x)\in\R^3$, for $t>0$, $x\in \R^3$, denote the electron density, electron velocity, electric field and magnetic field, respectively. Initial data is given as
\begin{equation}\label{s.o.id}
    [n,u,E,B]|_{t=0}=[n_0,u_0,E_0,B_0],\ \ x\in \R^3,
\end{equation}
with the compatible condition
\begin{equation}\label{s.o.cc}
    \na\cdot E_0=n_{\rm b}-n_0,\ \ \na\cdot B_0=0,\ \ x\in \R^3.
\end{equation}
The pressure function $p(\cdot)$ of the flow depending only on the density  satisfies the  power law $p(n)=An^\ga$ with constants $A>0$ and $\ga>1$, where $\ga$ is the adiabatic exponent. Constants $\nu>0$ and $n_{\rm b}>0$ are the velocity relaxation frequency and the equilibrium-charged density of ions, respectively. Through this paper, we set $A=1$, $\nu=1$ and $n_{\rm b}=1$ without loss of generality. In addition, the case of $\ga=1$ can be considered in the same way.

There are some mathematical studies on the above Euler-Maxwell system. By using the fractional Godunov scheme as well as the compensated compactness argument, Chen-Jerome-Wang \cite{CJW} proved global existence of weak solutions to the initial-boundary value problem in one space dimension for arbitrarily large initial data in $L^\infty$. Jerome \cite{Je} provided a local smooth solution theory for the Cauchy problem over $\R^3$ by adapting the classical semigroup-resolvent approach of Kato \cite{Kato}. Peng-Wang \cite{PW} established convergence of the compressible Euler-Maxwell system to the incompressible Euler system for well-prepared smooth initial data. Much more studies have been made for the Euler-Poisson system when the magnetic field is absent; see \cite{Guo,LNX,DLYY,LS,CT} and references therein for discussion and analysis of the different issues such as the existence of global smooth irrotational flow \cite{Guo}, large time behavior of solutions \cite{LNX}, stability of star solutions \cite{DLYY,LS} and finite time blow-up \cite{CT}.

On the other hand, the existence and uniqueness of global solutions to the Euler-Maxwell system in three space dimensions remains an open problem. In this paper, we answer it in the framework of smooth solutions with small amplitude. The main result is stated as follows.

\begin{theorem}\label{thm.s.o}
Let $N\geq 4$ and \eqref{s.o.cc} hold. There are  $\de_0>0$, $C_0$ such that if
\begin{equation*}
   \|[n_0-1,u_0,E_0,B_0]\|_N\leq \de_0,
\end{equation*}
then, the Cauchy problem \eqref{s.o}-\eqref{s.o.id} admits a unique global solution $[n(t,x),\linebreak u(t,x),E(t,x),B(t,x)]$ with
\begin{equation*}
    [n-1,u,E,B] \in C([0,\infty);H^{N}(\R^3))\cap Lip([0,\infty);H^{N-1}(\R^3))
\end{equation*}
and
\begin{equation*}
    \sup_{t\geq 0}\|[n(t)-1,u(t),E(t),B(t)]\|_N\leq C_0 \|[n_0-1,u_0,E_0,B_0]\|_N.
\end{equation*}
Moreover, there are $\de_1>0$, $C_1$ such that if
\begin{equation*}
   \|[n_0-1,u_0,E_0,B_0]\|_{13}+\| [u_0,E_0,B_0]\|_{L^1}\leq \de_1,
\end{equation*}
then the solution $[n(t,x),u(t,x),E(t,x),B(t,x)]$ satisfies that for any $t\geq 0$,
\begin{eqnarray*}
% \nonumber to remove numbering (before each equation)
 && \|n(t)-1\|_{L^q}\leq C_1 (1+t)^{-\frac{11}{4}},\\
 &&\|[u(t),E(t)]\|_{L^q}\leq C_1 (1+t)^{-2+\frac{3}{2q}},\\
 &&      \|B(t)\|_{L^q}\leq C_1 (1+t)^{-\frac{3}{2}+\frac{3}{2q}},
\end{eqnarray*}
with $2\leq q\leq \infty$.
\end{theorem}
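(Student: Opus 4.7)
\medskip

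\noindent\textbf{Proof proposal.}
The plan is to prove the two assertions separately: first the global existence and uniform $H^N$ bound by a continuity argument built on a carefully designed Lyapunov functional, and second the time-decay rates via a spectral analysis of the linearized system combined with Duhamel's principle. Throughout we work with the perturbation $\rho=n-1$, rewriting \eqref{s.o} as a quasilinear symmetrizable hyperbolic system for $U=[\rho,u,E,B]$ whose linearization around $0$ reads
\begin{equation*}
\pa_t\rho+\na\cdot u=0,\quad \pa_t u+\na \rho+u=-E,\quad \pa_t E-\na\times B= u,\quad \pa_t B+\na\times E=0,
\end{equation*}
together with the constraints $\na\cdot E=-\rho$, $\na\cdot B=0$. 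Only $u$ is directly dissipated, so the main technical issue is to extract partial dissipation for $\rho$, $E$ and $B$ from the coupling.

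Local existence in $C([0,T];H^N)\cap\mathrm{Lip}([0,T];H^{N-1})$ for $N\ge 4$ follows from Kato's semigroup theory for symmetric hyperbolic systems (as cited via \cite{Je}), so the first assertion reduces to a uniform a priori bound under the continuity assumption $\sup_{[0,T]}\|U(t)\|_N\le \de\ll 1$. I would combine:
\begin{itemize}
\item a standard $H^N$ energy identity obtained by applying $\pa^\al$, $|\al|\le N$, to the symmetrized system, pairing with $\pa^\al U$ and absorbing commutator terms by Moser/Kato--Ponce estimates, which yields $\tfrac{d}{dt}\|U\|_N^2+\|u\|_N^2\lesssim \de\,\|\na U\|_{N-1}^2$;
\item an auxiliary \emph{interaction} functional of the form
\begin{equation*}
\CI(t)=\sum_{|\al|\le N-1}\left(-\lng\pa^\al u,\na\pa^\al\rho\rng+\ka_1\lng\pa^\al u,\pa^\al E\rng+\ka_2\lng\pa^\al(\na\times E),\pa^\al B\rng\right),
\end{equation*}
whose time derivative, after using each of the four equations, produces the missing dissipations $\|\na\rho\|_{N-1}^2$, $\|E\|_{N-1}^2$ and $\|\na B\|_{N-2}^2$ modulo a small multiple of $\|u\|_N^2$ plus cubic terms. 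Choosing $\ka_1,\ka_2$ small and forming $\CE=\|U\|_N^2+\la\,\CI$ with $\la$ small enough gives $\CE\sim\|U\|_N^2$ and
\begin{equation*}
\tfrac{d}{dt}\CE+c\bigl(\|u\|_N^2+\|\na\rho\|_{N-1}^2+\|E\|_{N-1}^2+\|\na B\|_{N-2}^2\bigr)\le 0,
\end{equation*}
which closes the continuity argument and yields global existence and the $N$-norm bound.
\end{itemize}
The Poisson constraint $\na\cdot E=-\rho$ is preserved by the flow and is used to trade $\|\na\cdot E\|$ for $\|\rho\|$, while $\na\cdot B=0$ is used to identify $\na\times(\na\times B)=-\De B$ in the dissipation recovery for $B$. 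The main obstacle here is recovering dissipation for $B$: since no term of the system damps $B$ directly, the estimate must pass through two derivatives (hence the asymmetry in the decay rates later), and the cross term involving $\na\times E$ has to be handled without destabilizing the $E$-dissipation---this is the delicate place where the choice of $\ka_1,\ka_2$ matters.

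For the second assertion I would analyze the Green's function of the linearized operator. Writing the above linearization in Fourier space as $\pa_t\hat U=\FA(\xi)\hat U$ (after eliminating $\rho$ via the constraint on $E$), one checks by direct diagonalization that the eigenvalues split into a purely imaginary branch associated with $B$ and dispersive/diffusive branches associated with $(\rho,u,E)$. A careful expansion in low frequencies $|\xi|\to 0$ shows the semigroup $e^{t\FA(\xi)}$ acts on $\hat B$ essentially like the heat kernel $e^{-c|\xi|^2 t}$, on $[\hat u,\hat E]$ with an extra factor $|\xi|$ (explaining the $+1/2$ improvement), and on $\hat\rho$ with a factor $|\xi|^{5/2}$-type gain coming from the constraint and the relaxation; high frequencies are handled by the energy estimate above, which gives exponential decay for $e^{-c t}$ of the high-frequency part. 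Interpolating in $L^p$--$L^q$ using $L^1$ data then yields the three rates in the theorem for the linear flow. For the full nonlinear problem I would apply Duhamel:
\begin{equation*}
U(t)=e^{t\FA}U_0+\int_0^t e^{(t-s)\FA}\FM(U(s))\,ds,
\end{equation*}
where $\FM(U)$ collects quadratic nonlinearities. Placing $\FM(U)$ in $L^1\cap L^2$ via Sobolev embedding using the already-obtained uniform $H^{13}$ bound, and running a time-weighted bootstrap on the norms $(1+t)^{11/4}\|\rho(t)\|_{L^q}$, $(1+t)^{2-3/(2q)}\|[u,E](t)\|_{L^q}$, $(1+t)^{3/2-3/(2q)}\|B(t)\|_{L^q}$, one closes the estimates provided the initial $L^1$ plus $H^{13}$ norm is small. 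The requirement $N\ge 13$ in the decay statement arises precisely from the Sobolev index needed to control $L^\infty$ of $\FM(U)$ together with enough regularity to feed back into the decay of $B$, which converges the slowest and therefore dictates the loss of derivatives. I expect the delicate step to be bookkeeping the anisotropic low-frequency behavior so that the nonlinear source $\FM(U)$ does not spoil the faster rate for $\rho$, for which one has to exploit the extra divergence structure hidden in $\pa_t\rho+\na\cdot u=\na\cdot(\rho u)$.
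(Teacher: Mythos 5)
Your first half (global existence and the uniform $H^N$ bound) is essentially the paper's argument: symmetrize, do the $H^N$ energy identity, and recover the missing dissipation of $\rho$, $E$, $B$ through exactly the three interaction functionals $\lng\pa^\al u,\na\pa^\al\rho\rng$, $\lng\pa^\al u,\pa^\al E\rng$, $\lng\na\times\pa^\al E,\pa^\al B\rng$ with a hierarchy of small constants; this part is sound. The decay half, however, contains two genuine gaps.

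First, your description of the linearized spectrum for $\rho$ is wrong, and the error is not cosmetic. Eliminating $u$ and using $\na\cdot E=-\rho$, the longitudinal part satisfies $\pa_{tt}\hat\rho+\pa_t\hat\rho+(1+\ga|k|^2)\hat\rho=0$, whose roots are $-\tfrac12\pm i\sqrt{3/4+\ga|k|^2}$: the linear $\rho$ (and the longitudinal parts of $u,E$) decays like $e^{-t/2}$ \emph{uniformly in $k$}, with no heat-kernel factor at all. There is no ``$|\xi|^{5/2}$-type gain''; moreover a low-frequency factor $|\xi|^{5/2}e^{-c|\xi|^2t}$ would only produce $(1+t)^{-3/2(1/p-1/q)-5/4}$, which for $q=2$ is $(1+t)^{-2}$, not the claimed $q$-independent rate $(1+t)^{-11/4}$. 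The $-11/4$ rate is forced by the quadratic source (e.g.\ $u\times B$, decaying like $(1+t)^{-5/4}\cdot(1+t)^{-3/2}$) convolved against the \emph{exponentially} decaying longitudinal propagator. The exponential decay in $\rho_0$ is also what lets you avoid assuming $\|\rho_0\|_{L^1}$ small --- note the hypothesis of the theorem only puts $[u_0,E_0,B_0]$ in $L^1$ --- whereas your plan of ``interpolating in $L^p$--$L^q$ using $L^1$ data'' for all components would silently require $\rho_0\in L^1$.

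Second, your claim that ``high frequencies are handled by the energy estimate, which gives exponential decay $e^{-ct}$ of the high-frequency part'' is false, and your own dissipation functional already shows it: $\|u\|_N^2+\|\na\rho\|_{N-1}^2+\|E\|_{N-1}^2+\|\na B\|_{N-2}^2$ does not control $\|\na^N[E,B]\|^2$ (nor $\|B\|^2$), so the Lyapunov inequality cannot be closed into $\tfrac{d}{dt}\CE+c\,\CE\le 0$ at high frequency. The correct pointwise bound is $|\hat U(t,k)|\lesssim e^{-\la|k|^2t/(1+|k|^2)^2}|\hat U_0(k)|$, i.e.\ only $e^{-\la t/|k|^2}$ for $|k|\ge 1$: the system is of regularity-loss type, and algebraic decay of the top-order norms costs extra derivatives on the data (e.g.\ $\dot H^{N+3}$ to get $(1+t)^{-5/4}$ for $\|\na^N[E,B]\|$). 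Closing the nonlinear bootstrap then requires either a time-weighted iteration in both the weight and the derivative order, or an equivalent device, to compensate for the fact that $\|B\|^2$ and $\|\na^N[E,B]\|^2$ are absent from the dissipation; this, rather than the Sobolev index for $L^\infty$ control of the nonlinearity alone, is the real source of the requirement $H^{13}$. As written, your time-weighted bootstrap would not close.
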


It is obvious that when $N$ is large enough, the solution is classical belonging to $C^1([0,\infty)\times \R^3)$ and particularly when initial perturbation is smooth, the solution is also smooth. Here we remark that the Euler-Maxwell system in the whole space $\R^3$ is dispersive. Notice that the usual homogeneous Maxwell system for the electromagnetic field conserves the energy. But when the electromagnetic field is generated by the compressible electron flow, it will show a weak dispersive property and thus decay in time with some algebraic rate, which is essentially due to the coupling of the Maxwell system with the Euler equations. Furthermore, the weak dispersive property of the Maxwell system also leads to the fact that the time-decay speed of the magnetic field $B$ is the slowest among all the components of the solution. Finally, it should be emphasized that the velocity relaxation term of the considered Euler-Maxwell system here plays a key role in the proof of Theorem \ref{thm.s.o}. We shall study in the other forthcoming work the case of non-relaxation for which the proof is much more complicated to carry out.

Let us introduce some notations for the use throughout this paper. $C$  denotes
some positive (generally large) constant and $\la$ denotes some positive (generally small) constant, where both $C$ and
$\la$ may take different values in different places. For two quantities $a$ and $b$,
$a\sim b$ means $\la a \leq b \leq \frac{1}{\la} a $ for a generic constant $0<\la<1$. For any
integer $m\geq 0$, we use $H^m$, $\dot{H}^m$ to denote the
usual Sobolev space $H^m(\R^3)$ and the corresponding $m$-order homogeneous Sobolev space, respectively. Set $L^2=H^m$ when $m=0$. For simplicity, the norm of $H^m$ is denoted by $\|\cdot\|_m$ with $\|\cdot\|=\|\cdot\|_0$.
We use $\langle\cdot,\cdot\rangle$ to denote the inner product over
the Hilbert space $L^2(\R^3)$, i.e.
\begin{equation*}
    \langle f,g\rangle=\int_{\R^3} f(x)g(x)dx,\ \ f=f(x),g=g(x)\in
    L^2(\R^3).
\end{equation*}
For a multi-index $\al=[\al_1,\al_2,\al_3]$, we denote
$\pa^{\al}=\pa_{x_1}^{\al_1}\pa_{x_2}^{\al_2}\pa_{x_3}^{\al_3}$.
The length of $\al$ is $|\al|=\al_1+\al_2+\al_3$. For simplicity,
we also set $\pa_j=\pa_{x_j}$ for $j=1,2,3$.

We conclude this section by stating the arrangement of the rest of this paper. In Section \ref{sec.reform}, we reformulate the Cauchy problem under consideration. In Section \ref{sec.exi}, we prove the global existence and uniqueness of solutions. In Section \ref{sec.lhs}, we investigate the linearized homogeneous system to obtain the $L^p$-$L^q$ time-decay property and the explicit representation of solutions. Finally, in Section \ref{sec.nonldecay}, we study the time-decay rates of solutions to the reformulated nonlinear system and finish the proof of Theorem \ref{thm.s.o}.

\medskip

%\newpage
\section{Reformulation of the problem}\label{sec.reform}

Let $[n,u,E,B]$ be a smooth solution to the Cauchy problem of the Euler-Maxwell system \eqref{s.o} with given initial data \eqref{s.o.id} satisfying \eqref{s.o.cc}. Set
\begin{equation}\label{trans.1}
   \left\{ \begin{array}{c}
      \dis \si(t,x)=\frac{2}{\ga-1}\{[n(\frac{t}{\sqrt{\ga}},x)]^{\frac{\ga-1}{2}}-1\},\ \
      \dis v(t,x)=\frac{1}{\sqrt{\ga}}u(\frac{t}{\sqrt{\ga}},x),\\[5mm] \dis\widetilde{E}(t,x)=\frac{1}{\sqrt{\ga}}E(\frac{t}{\sqrt{\ga}},x),\ \
      \widetilde{B}(t,x)=\frac{1}{\sqrt{\ga}}B(\frac{t}{\sqrt{\ga}},x).
    \end{array}\right.
\end{equation}
Then, $V:=[\si,v,\widetilde{E},\widetilde{B}]$ satisfies
\begin{equation}\label{s.e}
\left\{\begin{array}{l}
  \dis \pa_t \si+v\cdot \na \si+(\frac{\ga-1}{2}\si+1)\na\cdot v=0,\\[3mm]
  \dis \pa_t v+ v\cdot \na v +(\frac{\ga-1}{2}\si+1)\na \si=-(\frac{1}{\sqrt{\ga}}\widetilde{E}+v\times \widetilde{B})-\frac{1}{\sqrt{\ga}}v,\\[3mm]
  \dis \pa_t \widetilde{E}-\frac{1}{\sqrt{\ga}}\na \times \widetilde{B}=\frac{1}{\sqrt{\ga}}v+\frac{1}{\sqrt{\ga}}[\si+\Phi(\si)]v,\\[3mm]
  \dis \pa_t \widetilde{B}+\frac{1}{\sqrt{\ga}}\na \times \widetilde{E}=0,\\[3mm]
  \dis \na\cdot \widetilde{E}=-\frac{1}{\sqrt{\ga}}[\si+\Phi(\si)],\ \ \na\cdot \widetilde{B}=0,\ \ t>0,x\in \R^3,
\end{array}\right.
\end{equation}
with initial data
\begin{equation}\label{s.e.id}
    V|_{t=0}=V_0:=[\si_0,v_0,\widetilde{E}_0,\widetilde{B}_0],\ \ x\in \R^3.
\end{equation}
Here, $ \Phi(\cdot)$ is defined by
\begin{equation*}
    \Phi(\si)=(\frac{\ga-1}{2}\si+1)^{\frac{2}{\ga-1}}-\si-1,
\end{equation*}
and $V_0=[\si_0,v_0,\widetilde{E}_0,\widetilde{B}_0]$ is given from $[n_0,u_0,E_0,B_0]$ according to the transform \eqref{trans.1}, and hence $V_0$ satisfies
\begin{equation}\label{s.e.cc}
    \na\cdot \widetilde{E}_0=-\frac{1}{\sqrt{\ga}}[\si_0+\Phi(\si_0)],\ \ \na\cdot \widetilde{B}_0=0, \ \ x\in \R^3.
\end{equation}
In the rest of this paper, to prove Theorem \ref{thm.s.o}, we are reduced to mainly investigate the well-posedness and large-time behavior for solutions to the reformulated Cauchy problem \eqref{s.e}-\eqref{s.e.id} with the compatible condition \eqref{s.e.cc}. In addition, when the large-time behavior of  solutions is considered, it is more convenient to use another reformulation of the original  Cauchy problem \eqref{s.o}-\eqref{s.o.id}. In fact, by setting $\rho(t,x)=n(t,x)-1$, then $U:=[\rho,u,E,B]$ satisfies
\begin{equation}\label{s.cr}
\left\{\begin{array}{l}
  \dis \pa_t \rho+\na \cdot u=-\na\cdot (\rho u),\\[3mm]
  \dis \pa_t u+ \ga \na \rho+E+u=-u\cdot \na u -u\times B-\ga[(1+\rho)^{\ga-2}-1]\na \rho,\\[3mm]
  \dis \pa_t E-\na \times B-u= \rho u,\\[3mm]
  \dis \pa_t B+\na \times E=0,\\[3mm]
  \dis \na\cdot E=-\rho,\ \ \na\cdot B=0,\ \ t>0,x\in \R^3,
\end{array}\right.
\end{equation}
with initial data
\begin{equation}\label{s.cr.id}
   U|_{t=0}=U_0:=[\rho_0,u_0,E_0,B_0],\ \ x\in \R^3,
\end{equation}
satisfying
\begin{equation}\label{s.cr.cc}
    \na\cdot E_0=-\rho_0,\ \ \na\cdot B_0=0.
\end{equation}
Here, $\rho_0=n_0-1$.

In what follows, we suppose the integer $N\geq 4$. Besides, for $V=[\si,v,\widetilde{E},\widetilde{B}]$, we define the full instant energy functional $\CE_N(V(t))$, the high-order instant energy functional $\CE_N^{\rm h}(V(t))$, and the corresponding dissipation rates $\CD_N(V(t))$, $\CD_N^{\rm h}(V(t))$ by
\begin{eqnarray}
% \nonumber to remove numbering (before each equation)
  \CE_N(V(t)) &\sim& \|[\si,v,\widetilde{E},\widetilde{B}]\|_N^2,\label{def.ef}\\
 \CE_N^{\rm h}(V(t)) &\sim& \|\na [\si,v,\widetilde{E},\widetilde{B}]\|_{N-1}^2,\label{def.ef.h}
\end{eqnarray}
and
\begin{eqnarray}
% \nonumber to remove numbering (before each equation)
  \CD_N(V(t)) &=& \|[\si, v]\|_N^2+\|\na[\widetilde{E},\widetilde{B}]\|_{N-2}^2+\|\widetilde{E}\|^2,\label{def.ef.d}\\
  \CD_N^{\rm h}(V(t)) &=& \|\na [\si, v]\|_{N-1}^2+\|\na[\widetilde{E},\widetilde{B}]\|_{N-2}^2.\label{def.ef.h.d}
\end{eqnarray}
Then, concerning the reformulated Cauchy problem \eqref{s.e}-\eqref{s.e.id}, one has the following global existence result.

\begin{proposition}\label{prop.s.exi}
Suppose \eqref{s.e.cc} for given initial data $V_0=[\si_0,v_0,\widetilde{E}_0,\widetilde{B}_0]$. Then, there are $\CE_N(\cdot)$ and $\CD_N(\cdot)$ in the form of \eqref{def.ef} and \eqref{def.ef.d}  such that the following holds true. If $\CE_N(V_0)>0$ is sufficiently small, the Cauchy problem \eqref{s.e}-\eqref{s.e.id} admits a unique global  nonzero solution $V=[\si,v,\widetilde{E},\widetilde{B}]$ satisfying
\begin{equation}\label{prop.s.exi.1}
   V \in C([0,\infty);H^{N}(\R^3))\cap Lip([0,\infty);H^{N-1}(\R^3)),
\end{equation}
and
\begin{equation}\label{prop.s.exi.2}
   \CE_N(V(t)) +\la \int_0^t\CD_N(V(s))ds\leq \CE_N(V_0)
\end{equation}
for any $t\geq0$.
\end{proposition}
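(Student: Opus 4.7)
The plan is to combine the standard local-in-time solvability for symmetric-hyperbolic systems (available from Kato's semigroup method, as already used by Jerome in the cited local theory) with a uniform a priori estimate of the form
\begin{equation*}
\frac{d}{dt}\CE_N(V(t))+\la\CD_N(V(t))\leq C\sqrt{\CE_N(V(t))}\,\CD_N(V(t)),
\end{equation*}
so that smallness of $\CE_N(V_0)$ and a bootstrap/continuation argument yield \eqref{prop.s.exi.2} and global existence. The constraints $\na\cdot\widetilde{E}=-\frac{1}{\sqrt{\ga}}[\si+\Phi(\si)]$ and $\na\cdot\widetilde{B}=0$ are preserved by the flow, hence are available throughout the estimates. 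The main work is to construct $\CE_N$ as $\|V\|_N^2$ plus small multiples of three interactive correctors designed to generate every term of $\CD_N$ on the dissipation side.

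\textbf{Baseline energy.} Apply $\pa^\al$ for $|\al|\leq N$ to \eqref{s.e}, symmetrize the $\si$-$v$ block with the weight $(\frac{\ga-1}{2}\si+1)$, and take the $L^2$ inner product with the corresponding components of $\pa^\al V$. The quasilinear convective terms cancel after integration by parts up to Moser-type commutators, the Lorentz force $v\times\widetilde{B}$ drops out by orthogonality to $v$, the Maxwell block is antisymmetric, and the relaxation term produces $\frac{1}{\sqrt{\ga}}\|v\|_N^2$. All nonlinear remainders are bounded by $\sqrt{\CE_N}\,\CD_N$ using Sobolev embedding with $N\geq 4$. This yields $\frac{d}{dt}\|V\|_N^2+\la\|v\|_N^2\leq C\sqrt{\CE_N}\,\CD_N$ and accounts for the $v$-part of $\CD_N$ only.

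\textbf{Interactive functionals.} The remaining dissipation is extracted in three further steps. First, for $|\al|\leq N-1$, differentiate $\langle\pa^\al v,\pa^\al\na\si\rangle$ in time; using the momentum equation to substitute $\pa_t v$ and the continuity equation to integrate the $\pa_t\si$ contribution by parts, the $\na\si$ term in the momentum equation produces $\|\na\si\|_{N-1}^2$. Second, the compatible condition allows one to bound $\|\si\|$ by $\|\na\widetilde{E}\|$ plus a $\Phi(\si)$-remainder, which upgrades $\|\na\si\|_{N-1}^2$ to the full $\|\si\|_N^2$; pairing the momentum equation directly with $\widetilde{E}$ simultaneously produces $\frac{1}{\sqrt{\ga}}\|\widetilde{E}\|^2$ through the electric-force term, with $\langle\pa_t v,\widetilde{E}\rangle$ absorbed via the $\widetilde{E}$-equation as a time derivative. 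Third, and most delicately, for $|\al|\leq N-2$, differentiate $-\langle\pa^\al\na\times\widetilde{E},\pa^\al\widetilde{B}\rangle$: using $\pa_t\widetilde{B}=-\frac{1}{\sqrt{\ga}}\na\times\widetilde{E}$ and $\pa_t\widetilde{E}=\frac{1}{\sqrt{\ga}}\na\times\widetilde{B}+\cdots$ together with $\na\cdot\widetilde{B}=0$ (so $\|\na\widetilde{B}\|\sim\|\na\times\widetilde{B}\|$) extracts $\|\na\widetilde{B}\|_{N-2}^2$, with the cross terms absorbed by the previous layers.

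\textbf{Closing the estimate.} Forming $\CE_N=\|V\|_N^2+\sum_i\eps_i\CI_i$ with the three correctors $\CI_i$ weighted by sufficiently small constants $\eps_i$ preserves $\CE_N\sim\|V\|_N^2$, and summing the four differential inequalities gives the targeted bound $\frac{d}{dt}\CE_N+\la\CD_N\leq C\sqrt{\CE_N}\,\CD_N$. The smallness $\CE_N(V_0)\ll 1$ then absorbs the right-hand side into $\la\CD_N/2$, and integration in $t$ yields \eqref{prop.s.exi.2}; combined with local well-posedness this completes the global existence. I expect the third interactive functional to be the main obstacle: since $\widetilde{B}$ has no direct damping, its dissipation must come entirely from the Ampère--Maxwell coupling, which costs one derivative and explains why $\CD_N$ only contains $\na[\widetilde{E},\widetilde{B}]$ at the $H^{N-2}$ level; keeping careful book-keeping of derivative losses across all four pairings, and handling the quasilinear structure of the pressure and Lorentz terms without spoiling the cancellations, is the delicate point.
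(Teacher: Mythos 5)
Your proposal is correct and follows essentially the same route as the paper: a baseline $H^N$ energy estimate yielding only the $\|v\|_N^2$ dissipation, followed by the three interactive functionals $\sum\langle\pa^\al v,\pa^\al\na\si\rangle$, $\sum\langle\pa^\al v,\pa^\al\widetilde{E}\rangle$ and $\sum\langle\na\times\pa^\al\widetilde{E},\pa^\al\widetilde{B}\rangle$ (the last over $|\al|\leq N-2$, accounting for the regularity loss in $[\widetilde{E},\widetilde{B}]$), combined with small weights and closed by local existence plus continuation. The only detail worth noting is that the cross term $\|v\|_N\|\na\widetilde{B}\|_{N-2}$ arising in the $\widetilde{E}$-layer is absorbed by the \emph{subsequent} $\widetilde{B}$-layer, which forces the specific hierarchy $\kappa_2^{3/2}\ll\kappa_3\ll\kappa_2\ll\kappa_1\ll1$ of the weights; otherwise your bookkeeping matches the paper's.
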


\begin{remark}
From \eqref{prop.s.exi.2} and \eqref{def.ef.d}, $\si$, $v$ and $\widetilde{E}$ are time-space integrable but $\widetilde{B}$ is not so. For the derivatives, $[\si,v]$ is time-space integrable up to $N$-order but $[\widetilde{E},\widetilde{B}]$ is so up to $N-1$ order only. Therefore, the Euler-Maxwell system is not only degenerately dissipative but also of the regularity-loss type. The similar phenomenon has been noticed in \cite{DS-VMB} for the study of the optimal large-time behavior of solutions to the two-species Vlasov-Maxwell-Boltzmann system.

\end{remark}

Moreover, solutions obtained in Proposition \ref{prop.s.exi} indeed decay in time with some rates under some extra regularity and integrability conditions on initial data. For that, given $V_0=[\si_0,v_0,\widetilde{E}_0,\widetilde{B}_0]$, set $\eps_m(V_0)$ as
\begin{equation}\label{def.eps.id}
\eps_m(V_0)=\|V_0\|_{m}+\|[v_0,\widetilde{E}_0,\widetilde{B}_0]\|_{L^1},
\end{equation}
for the integer $m\geq 4$. Then, one has the following two propositions.

\begin{proposition}\label{prop.s.decay}
Suppose that $V_0=[\si_0,v_0,\widetilde{E}_0,\widetilde{B}_0]$ satisfies \eqref{s.e.cc}. If $\eps_{N+2}(V_0)>0$ is sufficiently small, then  the solution $V=[\si,v,\widetilde{E},\widetilde{B}]$ satisfies
\begin{equation}\label{prop.s.decay.1}
      \|V(t)\|_{N}\leq C \eps_{N+2}(V_0)(1+t)^{-\frac{3}{4}}
\end{equation}
for any $t\geq 0$. Furthermore, if $\eps_{N+6}(V_0)>0$ is sufficiently small, then  the solution $V=[\si,v,\widetilde{E},\widetilde{B}]$ also satisfies
\begin{equation}\label{prop.s.decay.2}
      \|\na V(t)\|_{N-1}\leq C \eps_{N+6}(V_0)(1+t)^{-\frac{5}{4}}
\end{equation}
for any $t\geq 0$.
\end{proposition}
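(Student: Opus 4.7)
The plan is to combine the linear $L^p$-$L^q$ time-decay estimates for the homogeneous system established in Section \ref{sec.lhs} with Duhamel's formula and a bootstrap argument on suitable time-weighted norms, while using Proposition \ref{prop.s.exi} at the higher regularity levels $N+2$ and $N+6$ as a high-regularity reservoir to absorb the regularity-loss structure highlighted in the remark following the proposition. Concretely, I would first apply Proposition \ref{prop.s.exi} with $N$ replaced by $N+2$ (respectively $N+6$): provided $\eps_{N+2}(V_0)$ (respectively $\eps_{N+6}(V_0)$) is small, this gives global-in-time control $\CE_{N+2}(V(t)) \lesssim \eps_{N+2}(V_0)^2$ (respectively at level $N+6$) together with the integrated dissipation bound.

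Next I would write \eqref{s.e} in the mild form
\begin{equation*}
 V(t) \;=\; e^{tL}V_0 \;+\; \int_0^t e^{(t-s)L}\,\CN(V(s))\,ds,
\end{equation*}
where $L$ is the linearization around the equilibrium $[0,0,0,0]$ and $\CN(V)$ collects the quadratic-and-higher nonlinearities such as $v\cdot\na\si$, $\tfrac{\ga-1}{2}\si\,\na\cdot v$, $v\cdot\na v$, $\tfrac{\ga-1}{2}\si\na\si$, $v\times\widetilde B$, $[\si+\Phi(\si)]v$ and the smooth-in-$\si$ correction $\Phi(\si)$ itself (which satisfies $\Phi(0)=\Phi'(0)=0$, so $\|\Phi(\si)\|_m\lesssim \|\si\|_m^2$ once $\|\si\|_{L^\infty}$ is small by Sobolev embedding). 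From Section \ref{sec.lhs} I expect linear decay estimates of the schematic form
\begin{equation*}
 \bigl\|e^{tL}V_0\bigr\|_{H^m} \;\lesssim\; (1+t)^{-3/4}\bigl(\|V_0\|_{L^1}+\|V_0\|_{H^{m+\ell_0}}\bigr),
\end{equation*}
\begin{equation*}
 \bigl\|\na\, e^{tL}V_0\bigr\|_{H^m} \;\lesssim\; (1+t)^{-5/4}\bigl(\|V_0\|_{L^1}+\|V_0\|_{H^{m+\ell_1}}\bigr),
\end{equation*}
with regularity losses $\ell_0=2$ and $\ell_1=6$ dictated by the slow decay of the magnetic component (consistent with the jump $N+2\mapsto N+6$ between the two conclusions). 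Introducing the time-weighted functional
\begin{equation*}
 X(t) \;:=\; \sup_{0\leq s\leq t}\Bigl\{(1+s)^{3/4}\|V(s)\|_N + (1+s)^{5/4}\|\na V(s)\|_{N-1}\Bigr\},
\end{equation*}
applying these linear estimates to both the free and Duhamel terms, and using product/Moser inequalities to bound $\|\CN(V)\|_{L^1}$ and $\|\CN(V)\|_{H^N}$ by combinations such as $\|V\|_N\|\na V\|_{N-1}$ and $\|V\|_{L^\infty}\|V\|_{L^2}$, I expect the time convolutions $\int_0^t(1+t-s)^{-3/4}(1+s)^{-3/2}\,ds \lesssim (1+t)^{-3/4}$ (and its $5/4$ analogue) to yield an inequality of the form $X(t)\lesssim \eps_{N+6}(V_0) + X(t)^2$. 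A standard continuity argument under the smallness of $\eps_{N+6}(V_0)$ then closes both \eqref{prop.s.decay.1} and \eqref{prop.s.decay.2}.

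The main obstacle is the careful accounting of the regularity loss: since $\CD_N(V)$ controls $\|\na[\widetilde E,\widetilde B]\|_{N-2}$ but neither $\|[\widetilde E,\widetilde B]\|_{L^2}$ nor the top-order derivatives, each additional factor $(1+t)^{-1/2}$ extracted from the linear semigroup costs several derivatives of initial data, which is exactly why \eqref{prop.s.decay.2} demands four more derivatives than \eqref{prop.s.decay.1}. In practice I would prove \eqref{prop.s.decay.1} first, using $\eps_{N+2}(V_0)$ and the $(1+t)^{-3/4}$ linear estimate, then feed that decay into the Duhamel integrand for $\na V$ — whose nonlinearity contains terms like $\na(v\cdot\na v)$ requiring control of $\|\na V\|_{L^\infty}$, hence of $\|V\|_{H^{N+1}}$ by embedding — to close \eqref{prop.s.decay.2} with the higher input $\eps_{N+6}(V_0)$. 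The delicate points are ensuring that the $L^1$-norms of nonlinear terms decay fast enough (this uses the $L^2\cdot L^2\hookrightarrow L^1$ trick together with the $3/4$ and $5/4$ rates) and that at every step the requisite high norm of $V(s)$ is uniformly bounded by $\eps_{N+6}(V_0)$ rather than by $X(s)$, so that the bootstrap remains quadratic in $X$.
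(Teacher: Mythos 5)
Your plan is a pure mild-form bootstrap: Duhamel on the whole vector $V$, the crude semigroup estimate of Theorem \ref{thm.lhs}, and a quadratic closure in a time-weighted norm $X(t)$. This is genuinely different from the paper, which runs a time-weighted \emph{energy} iteration on the Lyapunov inequality \eqref{lem.ief.1} (multiply by $(1+t)^{\ell}$, use $\CE_N(V)\leq C(\|\widetilde{B}\|^2+\CD_{N+1}(V))$, and iterate across the levels $N\to N+1\to N+2$ for $1<\ell<2$) and invokes Duhamel \emph{only} for the pieces that the dissipation rate $\CD_N$ misses, namely $\|\widetilde{B}(t)\|$ in the first part and $\|\na \widetilde{B}(t)\|+\|\na^N[\widetilde{E}(t),\widetilde{B}(t)]\|$ in the second. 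As written, your route has two genuine gaps. First, your schematic linear estimate carries $\|V_0\|_{L^1}$, hence $\|\si_0\|_{L^1}$, which is \emph{not} part of $\eps_m(V_0)$ as defined in \eqref{def.eps.id}; the paper explicitly flags this point and circumvents it with the refined component-wise estimates of Theorem \ref{thm.lhs.re} and Corollary \ref{cor.lhs.re}, in which the density component of the data enters only through $L^2$-based norms (and $\rho$ itself decays exponentially). Applying Theorem \ref{thm.lhs} to the full initial vector requires a hypothesis you do not have.

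Second, the derivative accounting in the high-frequency part of your Duhamel term does not close at the stated threshold $N+2$. To estimate $\|\na^N e^{(t-s)L}G(s)\|$ with $G$ the nonlinearity, the regularity-loss term in \eqref{thm.lhs.1} forces you to pay $(1+t-s)^{-\ell/2}\|\na^{N+\ell}G(s)\|$, and $\|\na^{N+\ell}G(s)\|\lesssim \|V(s)\|_{N+\ell+1}\|V(s)\|_{W^{1,\infty}}$, where the top norm $\|V(s)\|_{N+\ell+1}$ is only \emph{bounded} (by Proposition \ref{prop.s.exi} at a higher level), not decaying. The surviving decay factor is then only $(1+s)^{-3/4}$ from $\|V(s)\|_{W^{1,\infty}}$, so the convolution $\int_0^t(1+t-s)^{-\ell/2}(1+s)^{-3/4}ds$ yields $(1+t)^{-3/4}$ only when $\ell/2>1$, i.e.\ $\ell\geq 3$, which demands $\|V_0\|_{N+4}$ small rather than $\|V_0\|_{N+2}$ (with $\ell=2$ one is left with a logarithm, and with $\ell=3/2$ one only gets $(1+t)^{-1/2}$). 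In other words, a pure Duhamel argument is structurally wasteful for a regularity-loss system; the paper's weighted-energy iteration \eqref{ief.p1} is precisely the device that buys the rate $(1+t)^{-3/2}$ for $\CE_N(V(t))$ at the cost of only two extra derivatives, leaving Duhamel to handle the single undissipated quantity $\|\widetilde{B}(t)\|^2$ via Lemma \ref{lem.B2.bdd}. Your proof of \eqref{prop.s.decay.2} inherits the same two issues; the paper instead derives the high-order inequality \eqref{lem.ief.h.1}, whose dissipation controls $\CE^{\rm h}_N$ up to $\|\na\widetilde{B}\|^2+\|\na^N[\widetilde{E},\widetilde{B}]\|^2$, and bounds only those two quantities by Duhamel using \eqref{cor.linear.s.3} together with the already-proved first part at level $N+4$.
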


\begin{proposition}\label{prop.s.Lq}
Let $2\leq q\leq \infty$. Suppose that  $V_0=[\si_0,v_0,\widetilde{E}_0,\widetilde{B}_0]$ satisfies\eqref{s.e.cc} and $\eps_{13}(V_0)>0$ is sufficiently small. Then, the solution $V=[\si,v,\widetilde{E},\widetilde{B}]$ satisfies that for any $t\geq 0$,
\begin{eqnarray}
% \nonumber to remove numbering (before each equation)
 && \|\si(t)\|_{L^q}\leq C (1+t)^{-\frac{11}{4}},\label{prop.s.Lq.1}\\
 &&\|[v(t),\widetilde{E}(t)]\|_{L^q}\leq C (1+t)^{-2+\frac{3}{2q}},\label{prop.s.Lq.2}\\
 &&      \|\widetilde{B}(t)\|_{L^q}\leq C (1+t)^{-\frac{3}{2}+\frac{3}{2q}}.\label{prop.s.Lq.3}
\end{eqnarray}
\end{proposition}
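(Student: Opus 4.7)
The plan is to apply Duhamel's formula together with the linearized $L^p$--$L^q$ time-decay estimates of Section \ref{sec.lhs}, then close the argument by a time-weighted bootstrap. Since the decay analysis is more convenient in the formulation \eqref{s.cr}, I would work with $U=[\rho,u,E,B]$ and write the system as $\pa_t U=LU+G(U)$, where $L$ is the linearization at the constant state and $G(U)$ collects the quadratic nonlinearities $-\na\cdot(\rho u)$, $-u\cdot\na u-u\times B-\ga[(1+\rho)^{\ga-2}-1]\na\rho$, $\rho u$, and $0$; Duhamel's formula then reads $U(t)=e^{tL}U_0+\int_0^t e^{(t-s)L}G(U(s))\,ds$. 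The bounds on $V=[\si,v,\widetilde{E},\widetilde{B}]$ follow from the corresponding bounds on $U$ via the change of variables \eqref{trans.1}.

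Guided by the target rates, I would introduce the time-weighted functional
\begin{equation*}
\mathcal{M}(t)=\sup_{0\leq s\leq t}\bigl\{(1+s)^{\frac{11}{4}}\|\rho(s)\|_{L^q}+(1+s)^{2-\frac{3}{2q}}\|[u(s),E(s)]\|_{L^q}+(1+s)^{\frac{3}{2}-\frac{3}{2q}}\|B(s)\|_{L^q}\bigr\},
\end{equation*}
and aim to show $\mathcal{M}(t)\leq C\eps_{13}(V_0)$ for all $t\geq 0$. The linear term $e^{tL}U_0$ contributes the right decay directly from the $L^1$-$L^q$ estimates of Section \ref{sec.lhs}. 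For the nonlinear integral I would split $[0,t]=[0,t/2]\cup[t/2,t]$: on $[0,t/2]$ apply the $L^1$-$L^q$ decay with $\|G(U(s))\|_{L^1}\lesssim\|U(s)\|_{L^2}\|\na U(s)\|_{L^2}$, and on $[t/2,t]$ apply the $L^2$-$L^q$ decay with $\|G(U(s))\|_{L^2}$ controlled by H\"older's inequality and the Sobolev embedding $H^2\hookrightarrow L^\infty$. Feeding in the bounds $\|V(t)\|_N\lesssim(1+t)^{-3/4}$ and $\|\na V(t)\|_{N-1}\lesssim(1+t)^{-5/4}$ from Proposition \ref{prop.s.decay}, together with Gagliardo--Nirenberg interpolation between these and $\mathcal{M}(s)$ itself, should yield an inequality of the form $\mathcal{M}(t)\leq C\eps_{13}(V_0)+C\mathcal{M}(t)^2+C\eps_{13}(V_0)\,\mathcal{M}(t)$ which closes under smallness.

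The main obstacle is the very fast $(1+t)^{-11/4}$ rate for $\rho$, which is faster than any heat-like decay. To obtain it I would exploit Gauss's law $\na\cdot E=-\rho$ to replace $\rho$ by a spatial derivative of $E$, thereby picking up an extra factor of $(1+t)^{-1/2}$ from the linear $L^p$-$L^q$ decay at high frequencies, and iterate the Duhamel estimate so that the quadratic remainder is absorbed without costing decay. The high regularity requirement $\eps_{13}(V_0)$ is forced by the regularity-loss structure of the Maxwell block noted in the remark after Proposition \ref{prop.s.exi}, combined with the many Sobolev embeddings needed to pass from $L^2$-based energy bounds to $L^\infty$ bounds while leaving room for the bootstrap. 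Finally, the full range $2\leq q\leq\infty$ follows either by applying the linear $L^1$-$L^q$ estimates of Section \ref{sec.lhs} at each fixed $q$, or by interpolating between the $q=2$ and $q=\infty$ endpoints.
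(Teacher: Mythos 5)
Your overall framework (Duhamel's formula for $U=[\rho,u,E,B]$, the refined linear decay estimates of Section \ref{sec.lhs}, feeding in the rates of Proposition \ref{prop.s.decay}, a time-weighted bootstrap, and $L^2$--$L^\infty$ interpolation to cover all $q$) is the same as the paper's, and your treatment of $B$ and $[u,E]$ would go through essentially as in the paper. The genuine gap is in your mechanism for the rate $(1+t)^{-11/4}$ for $\rho$. Replacing $\rho$ by $-\na\cdot E$ and invoking an ``extra factor of $(1+t)^{-1/2}$ from high frequencies'' cannot work: the polynomial decay rate of the linear semigroup is governed by the low-frequency part, where one spatial derivative buys exactly one half power, so $\|\na\cdot E\|_{L^2}$ decays at best like $(1+t)^{-7/4}$ and $\|\na\cdot E\|_{L^\infty}$ like $(1+t)^{-5/2}$ at the linear level (see \eqref{thm.lhs.re.3} with $m=1$) --- both short of $(1+t)^{-11/4}$. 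Note also that the claimed rate for $\rho$ is independent of $q$, which already signals that it does not arise from a heat-kernel-type $L^1$--$L^q$ mechanism.

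The actual source of the $-11/4$ rate is that the $\rho$-component of the \emph{linear} flow decays exponentially, uniformly in frequency: eliminating $u$ and using Gauss's law $\na\cdot E=-\rho$ turns the linear density equation into the damped Klein--Gordon equation $\pa_{tt}\rho-\ga\De\rho+\rho+\pa_t\rho=0$, whence $|\hat\rho(t,k)|\leq Ce^{-t/2}|[\hat\rho_0(k),\hat u_0(k)]|$ (Lemma \ref{lem.4tfbd}, estimate \eqref{lem.4tfbd.1}). Consequently the decay of $\rho$ in the nonlinear problem is dictated entirely by the decay of the sources $g_1,g_2$ in $L^2\cap\dot H^2$. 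The paper then bootstraps twice: a first pass gives $\|[g_1,g_2]\|\lesssim(1+t)^{-5/2}$ and hence $\|\rho\|\lesssim(1+t)^{-5/2}$; re-estimating the sources with the improved bound on $\rho$ and with $\|u\|_{L^\infty}\lesssim(1+t)^{-2}$, $\|B\|\lesssim(1+t)^{-3/4}$ yields $\|[g_1,g_2]\|_{L^2\cap\dot H^2}\lesssim(1+t)^{-11/4}$, and the exponential linear decay converts this directly into $\|\rho(t)\|_{L^2\cap L^\infty}\lesssim(1+t)^{-11/4}$. Without identifying this exponential decay of the density mode, your scheme cannot close at the stated rate, so you should replace the $\na\cdot E$ substitution by the damped-oscillator analysis of the linearized density equation.
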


\begin{remark}
Proposition \ref{prop.s.decay} shows that for the slower time-decay rate described by \eqref{prop.s.decay.1}, initial data needs the extra $H^2$ space regularity, while for the faster decay rate as in \eqref{prop.s.decay.2}, initial data needs the extra $H^6$ space regularity. The regularity index $13$ from $\eps_{13}(V_0)>0$ in Proposition \ref{prop.s.Lq} comes out due to Proposition \ref{prop.s.decay} and the bootstrap argument. Notice that in terms of the definition \eqref{def.eps.id} of $\eps_{m}(V_0)>0$, we do not suppose that $\|\si_0\|_{L^1}$ is sufficiently small in both Proposition \ref{prop.s.decay} and Proposition \ref{prop.s.Lq}. This is non-trivial on the basis of the analysis of the time-decay property of solutions to the linearized homogeneous system; see Theorem \ref{thm.lhs.re} and Corollary \ref{cor.lhs.re}.

\end{remark}

Finally, it is easy to see that Theorem \ref{thm.s.o} follows from Proposition \ref{prop.s.exi} and Proposition \ref{prop.s.Lq}. Thus, the rest of this paper is to prove the stated-above three propositions.

%\newpage

\section{Global solutions for the nonlinear system}\label{sec.exi}

In this section, we shall prove Proposition \ref{prop.s.exi} for the global existence and uniqueness of solutions to the Cauchy problem  \eqref{s.e}-\eqref{s.e.id}.  In the first subsection, we obtain some uniform-in-time a priori estimates for any smooth solution. In the second subsection, we combine those a priori estimates with the local existence of solutions to extend the local solution up to infinite time with the help of the continuity argument.

\subsection{A priori estimates}

We begin to use the normal energy method to obtain some uniform-in-time a priori estimates for smooth solutions to the Cauchy problem  \eqref{s.e}-\eqref{s.e.id}. Notice that \eqref{s.e} is a quasi-linear symmetric hyperbolic system. The main goal of this subsection is to prove

\begin{theorem}[a priori estimates]\label{thm.ap}
Suppose
$$
V=[\si,v,\widetilde{E},\widetilde{B}]\in C([0,T);H^{N}(\R^3))
$$
is smooth for $T>0$ with
\begin{equation}\label{thm.ap.1}
    \sup_{0\leq t<T}\|\si(t)\|_{N}\leq 1,
\end{equation}
and assume that $V$ solves the system \eqref{s.e} for $t\in(0,T)$.
Then, there are $\CE_N(\cdot)$ and $\CD_N(\cdot)$ in the form of \eqref{def.ef} and \eqref{def.ef.d} such that
\begin{equation}\label{thm.ap.2}
    \frac{d}{dt}\CE_N(V(t))+\la \CD_N(V(t))\leq C \left[\CE_N(V(t))^{1/2}+\CE_N(V(t))\right]\CD_N(V(t))
\end{equation}
for any $0\leq t<T$.
\end{theorem}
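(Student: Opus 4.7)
The plan is to run a weighted energy-dissipation estimate on the symmetric-hyperbolic form of \eqref{s.e}, and then to compensate for the fact that only $v$ carries an immediate linear dissipation (the relaxation term $-v/\sqrt{\ga}$) by producing the missing $\na\si$, $\widetilde E$, and $\na\widetilde B$ dissipation through a family of cross-product (``interaction'') functionals. Throughout, the smallness \eqref{thm.ap.1} permits Moser-type product and commutator inequalities to absorb all quasilinear corrections into the cubic right-hand side $(\CE_N^{1/2}+\CE_N)\CD_N$ of \eqref{thm.ap.2}.

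First, for each multi-index $\al$ with $|\al|\le N$ I would apply $\pa^\al$ to the four evolution equations in \eqref{s.e} and pair with $\pa^\al\si$, $\pa^\al v$, $\pa^\al\widetilde E$, $\pa^\al\widetilde B$ respectively. The symmetry of the Euler block (the matching $\frac{\ga-1}{2}\si+1$ coefficients on $\na\cdot v$ in \eqref{s.e}$_1$ and on $\na\si$ in \eqref{s.e}$_2$), the antisymmetry of $\na\times$ between the Amp\`ere and Faraday equations, and the pointwise identity $(v\times\widetilde B)\cdot v\equiv 0$ together cancel the principal-order linear couplings, leaving on the dissipative side only $\la\|\pa^\al v\|^2$ from the relaxation. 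Commutators between $\pa^\al$ and the quasilinear coefficients are controlled by product estimates and \eqref{thm.ap.1}, yielding a basic bound $\frac{d}{dt}\CE_N(V)+\la\|v\|_N^2\le C(\CE_N^{1/2}+\CE_N)\CD_N$.

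Second, to upgrade to the full dissipation $\CD_N$ in \eqref{def.ef.d}, I would introduce three interaction functionals. Pairing $\pa^\al v$ with $\pa^\al\na\si$ for $|\al|\le N-1$ and using \eqref{s.e}$_1$ to eliminate the resulting $\pa_t\si$ gives $\frac{d}{dt}\lng\pa^\al v,\pa^\al\na\si\rng+\la\|\pa^\al\na\si\|^2\le C\|\pa^\al\na v\|^2+\cdots$, producing the $\na\si$ part of $\CD_N$. Pairing the $v$-equation with $\widetilde E$, rewriting $\pa_t v\cdot\widetilde E=\pa_t(v\cdot\widetilde E)-v\cdot\pa_t\widetilde E$, and substituting \eqref{s.e}$_3$ for $\pa_t\widetilde E$ yields $\frac{d}{dt}\lng v,\widetilde E\rng+\la\|\widetilde E\|^2\le C(\|v\|^2+\|\na\widetilde B\|^2+\cdots)$, which produces the $\|\widetilde E\|^2$ term. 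Finally, for $|\al|\le N-2$, pairing $\pa^\al$ of the Amp\`ere equation with $-\pa^\al\na\times\widetilde B$ and using $\pa_t\widetilde B=-\na\times\widetilde E/\sqrt{\ga}$ together with $\na\cdot\widetilde B=0$ (so that $\|\pa^\al\na\widetilde B\|=\|\pa^\al\na\times\widetilde B\|$) produces $\frac{d}{dt}\lng\pa^\al\widetilde E,-\pa^\al\na\times\widetilde B\rng+\la\|\pa^\al\na\widetilde B\|^2\le C\|\pa^\al\na\widetilde E\|^2+\cdots$; the $\pa^\al\na\widetilde E$ factor is in turn controlled (via \eqref{s.e}$_2$) by $\pa^\al\na\si$, $\pa^\al v$, and lower-order nonlinearities, closing the hierarchy. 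Taking a linear combination of the basic energy estimate with these three interaction estimates, with sufficiently small positive weights in front of the latter so that the cross terms $\lng\pa^\al v,\pa^\al\na\si\rng$, $\lng v,\widetilde E\rng$, $\lng\pa^\al\widetilde E,-\pa^\al\na\times\widetilde B\rng$ are all absorbed into $\CE_N$, delivers \eqref{thm.ap.2}.

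The main obstacle I anticipate is the regularity-loss structure reflected in \eqref{def.ef.d}: $\CD_N$ controls $\widetilde E$ and $\widetilde B$ only up to $\na$ in $H^{N-2}$, so at the top order the electromagnetic fields have no dissipation available. Consequently any quasilinear error in which $\widetilde E$ or $\widetilde B$ is differentiated $N$ times must be closed by putting that factor in $L^\infty$ via Sobolev embedding out of $\CE_N^{1/2}$ and leaving the remaining factor in $\CD_N$. The compatibility relation $\na\cdot\widetilde E=-[\si+\Phi(\si)]/\sqrt{\ga}$ is essential here: it trades the (missing) divergence part of $\pa^\al\widetilde E$ at top order for a term in $\si$ whose dissipation is available, and $\na\cdot\widetilde B=0$ plays the analogous role for $\widetilde B$. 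Careful bookkeeping of these top-order couplings, together with the tuning of the three weight parameters attached to the interaction functionals so that no indefinite cross term survives, is where I expect the bulk of the work to lie.
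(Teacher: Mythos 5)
Your strategy coincides with the paper's: a zeroth-level energy identity yielding only the relaxation dissipation $\la\|v\|_N^2$, followed by exactly the three interaction functionals $\sum\lng \pa^\al v,\pa^\al\na\si\rng$, $\sum\lng \pa^\al v,\pa^\al\widetilde{E}\rng$, $\sum\lng \na\times\pa^\al\widetilde{E},\pa^\al\widetilde{B}\rng$, combined with small weights $1\gg\kappa_1\gg\kappa_2\gg\kappa_3>0$. However, two points in your sketch, taken as written, would prevent the linear combination from closing, and both are resolved in the paper by devices you either misplace or omit.

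First, in the $v$--$\na\si$ step the coupling term $\frac{1}{\sqrt{\ga}}\lng\pa^\al\widetilde{E},\pa^\al\na\si\rng$ must be integrated by parts and converted, via the constraint $\na\cdot\widetilde{E}=-\frac{1}{\sqrt{\ga}}[\si+\Phi(\si)]$, into the \emph{good} term $+\frac{1}{\ga}\|\pa^\al\si\|^2$ plus a quadratic error. This is what places the zeroth-order $\|\si\|^2$ into $\CD_N$ (your version only produces $\|\na\si\|_{N-1}^2$), and, more importantly, if one instead estimates this coupling by $\epsilon\|\na\si\|_{N-1}^2+C\|\widetilde{E}\|_{N-1}^2$, the resulting $\kappa_1C\|\widetilde{E}\|_{N-1}^2$ can never be absorbed by the $\widetilde{E}$-dissipation, which carries the much smaller weight $\kappa_2$. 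You invoke this constraint only for ``top-order'' electromagnetic terms; its essential use is here, at orders $|\al|\le N-1$. Second, in the $v$--$\widetilde{E}$ step you record the magnetic contribution as $C\|\na\widetilde{B}\|^2$ with a weight-independent constant; since $\kappa_3\ll\kappa_2$, the term $\kappa_2C\|\na\widetilde{B}\|_{N-2}^2$ could then never be dominated by the available $\kappa_3\la\|\na\widetilde{B}\|_{N-2}^2$, and the hierarchy fails. The paper's stated ``key observation'' is that this contribution is the \emph{product} $C\|v\|_N\|\na\widetilde{B}\|_{N-2}$ of two dissipation norms, so that the weighted Cauchy--Schwarz inequality $2\kappa_2\|v\|_N\|\na\widetilde{B}\|_{N-2}\le\kappa_2^{1/2}\|v\|_N^2+\kappa_2^{3/2}\|\na\widetilde{B}\|_{N-2}^2$ applies, and the choice $\kappa_2^{3/2}\ll\kappa_3$ (still compatible with $\kappa_3\ll\kappa_2$) absorbs both pieces. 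With these two repairs your outline reproduces the paper's proof; without them the tuning of $\kappa_1,\kappa_2,\kappa_3$ that you defer to ``the bulk of the work'' has no admissible solution.
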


\begin{proof}
It is divided by five steps as follows.

\medskip

\noindent{\it Step 1.} It holds that
\begin{equation}\label{thm.ap.p01}
    \frac{1}{2}\frac{d}{dt}\|V\|_N^2+\frac{1}{\sqrt{\ga}}\|v\|_N^2\leq C\|V\|_N(\|v\|^2+\|\na[\si,v]\|_{N-1}^2).
\end{equation}
In fact, from the first two equations of \eqref{s.e}, energy estimates on $\pa^\al\si$ and $\pa^\al v$ for $|\al|\leq N$ give
\begin{equation}\label{thm.ap.p02}
    \frac{1}{2}\frac{d}{dt}\|\pa^\al [\si,v]\|^2+\frac{1}{\sqrt{\ga}}\|\pa^\al v\|^2+\frac{1}{\sqrt{\ga}}\langle\pa^\al \widetilde{E}, \pa^\al v\rangle=-\sum_{\be<\al}C^{\al}_{\be}I_{\al,\be}(t)+I_1(t),
\end{equation}
with
\begin{eqnarray*}
% \nonumber to remove numbering (before each equation)
  I_{\al,\be}(t) &=& \langle\pa^{\al-\be} v\cdot \na \pa^\be \si,\pa^\al \si\rangle
  +\langle\pa^{\al-\be} v\cdot \na \pa^\be v,\pa^\al v\rangle\\
  &&+\frac{\ga+1}{2}\langle\pa^{\al-\be} \si\na\cdot  \pa^\be v,\pa^\al \si\rangle
  +\frac{\ga+1}{2}\langle\pa^{\al-\be} \si\na\pa^\be v,\pa^\al \si\rangle\\
  &&+\langle\pa^{\al-\be} v\times \pa^\be \widetilde{B},\pa^\al v\rangle
\end{eqnarray*}
and
\begin{equation*}
% \nonumber to remove numbering (before each equation)
  I_1(t) = \frac{1}{2}\langle \na\cdot v, |\pa^\al \si|^2+|\pa^\al v|^2\rangle+\frac{\ga+1}{2}\langle\na\si\cdot \pa^\al v,\pa^\al \si\rangle-\langle v\times \widetilde{B}, \pa^\al v\rangle,
\end{equation*}
where integration by parts were used. When $|\al|=0$, it suffices to estimate $I_1(t)$ by
\begin{eqnarray*}
% \nonumber to remove numbering (before each equation)
  I_1(t) &\leq & C\|\na\cdot v\|_{L^2}(\|\si\|_{L^6}\|\si\|_{L^3}+\|v\|_{L^6}\|v\|_{L^3})\\
  &&+C\|\na\si\|_{L^2}\|\si\|_{L^6}\|v\|_{L^2}+C\|\widetilde{B}\|_{L^\infty}\|v\|_{L^2}^2\\
  &\leq &C\|[\si,v]\|_{H^1}\|\na [\si,v]\|^2+C\|\na \widetilde{B}\|_{H^1}\|v\|^2,
\end{eqnarray*}
which is further bounded by the r.h.s. term of \eqref{thm.ap.p01}. When $|\al|\geq 1$, since each term in $I_{\al,\be}(t)$ and $I_1(t)$ is the integration of the  three-terms product in which there is at least one term containing the derivative, one has
\begin{equation*}
|I_{\al,\be}(t)|+|I_1(t)|\leq C\|[\si,v,\widetilde{B}]\|_N\|\na [\si,v]\|_{N-1}^2,
\end{equation*}
which is also further bounded by the r.h.s. term of \eqref{thm.ap.p01}. On the other hand, from \eqref{s.e}, energy estimates on $\pa^\al\widetilde{E}$ and $\pa^\al \widetilde{B}$ with $|\al|\leq N$
give
\begin{equation}\label{thm.ap.p02.1}
    \frac{1}{2}\frac{d}{dt}\|\pa^\al [\widetilde{E},\widetilde{B}]\|^2-\frac{1}{\sqrt{\ga}}\langle\pa^\al v,\pa^\al \widetilde{E}\rangle \leq \frac{1}{\sqrt{\ga}}\langle \pa^\al [(\si+\Phi(\si))v],\pa^\al \widetilde{E}\rangle:=I_2(t).
\end{equation}
In a similar way as before, when $|\al|=0$,
\begin{equation*}
    I_2(t)\leq C\|\na \si\|\cdot \|v\|_1\|\widetilde{E}\|,
\end{equation*}
and when $|\al|>0$,
\begin{equation*}
    I_2(t)\leq C\|\na \si\|_{N-1}\|\na v\|_{N-1}\|\na \widetilde{E}\|_{N-1}.
\end{equation*}
Thus, for $|\al|\leq N$, one has
\begin{equation*}
    I_2(t)\leq C\|\widetilde{E}\|_{N}(\|\na [\si,v]\|_{N-1}^2+\|v\|^2),
\end{equation*}
which is bounded by the r.h.s. term of \eqref{thm.ap.p01}. Then, \eqref{thm.ap.p01} follows by taking summation of  \eqref{thm.ap.p02} and \eqref{thm.ap.p02.1} over $|\al|\leq N$. Here, we stop to remark that in this step, the time evolution of the full instant energy $\|V(t)\|_N^2$ has been obtained but its dissipation rate only contains the contribution from the explicit relaxation variable $v$. In the following three steps, by introducing some interactive functionals, the dissipation from contributions of the rest components $\si,\widetilde{E}$ and $\widetilde{B}$ can be recovered in turn.

\medskip

\noindent{\it Step 2.} It holds that
\begin{equation}\label{thm.ap.p03}
   \frac{d}{dt}\CE_{N,1}^{\rm int}(V)+\la \|\si\|_N^2\leq C\|\na v\|_{N-1}^2+ C\|[\si,v,\widetilde{B}]\|_N^2\|\na[\si,v]\|_{N-1}^2,
\end{equation}
where $\CE_{N,1}^{\rm int}(\cdot)$ is defined by
\begin{equation*}
    \CE_{N,1}^{\rm int}(V)=\sum_{|\al|\leq N-1}\lng \pa^\al v,\pa^\al \na \si\rng.
\end{equation*}
In fact, notice that the first two equations of \eqref{s.e} can be rewritten as
\begin{align}
&\pa_t \si +\na\cdot v=f_1,\ \ f_1:=-v\cdot \na\si-\frac{\ga+1}{2}\si \na\cdot v,\label{thm.ap.p05}\\
&\pa_t v+\na \si +\frac{1}{\sqrt{\ga}}\widetilde{E}=-\frac{1}{\sqrt{\ga}} v+f_2,\ \ f_2:=-v\cdot \na v-\frac{\ga+1}{2}\si \na\si-v\times \widetilde{B}.    \label{thm.ap.p06}
\end{align}
Let $|\al|\leq N-1$. Applying $\pa^\al$ to \eqref{thm.ap.p06}, multiplying it by $\pa^\al \na \si$, taking integrations in $x$ and then using integration by parts and also the final equation of \eqref{s.e} gives
\begin{multline*}
% \nonumber to remove numbering (before each equation)
 \frac{d}{dt}\lng \pa^\al v,\pa^\al\na \si\rng +\|\pa^\al \na \si\|^2+\frac{1}{\ga}\|\pa^\al\si\|^2=\lng \pa^\al v,\pa^\al \na\pa_t \si\rng\\
  -\frac{1}{\sqrt{\ga}}\lng \pa^\al v,\pa^\al \na \si\rng
-\frac{1}{\ga}\lng \pa^\al \Phi(\si),\pa^\al \si\rng +\lng \pa^\al f_2,\pa^\al \na \si\rng,
\end{multline*}
which further by replacing $\pa_t\si$ from \eqref{thm.ap.p05}, implies
\begin{multline*}
% \nonumber to remove numbering (before each equation)
 \frac{d}{dt}\lng \pa^\al v,\pa^\al\na \si\rng +\|\pa^\al \na \si\|^2+\frac{1}{\ga}\|\pa^\al\si\|^2\\
 =\|\pa^\al \na\cdot v\|^2
  -\frac{1}{\sqrt{\ga}}\lng \pa^\al v,\pa^\al \na \si\rng
-\frac{1}{\ga}\lng \pa^\al \Phi(\si),\pa^\al \si\rng\\
-\lng \pa^\al f_1,\pa^\al \na\cdot v\rng +\lng \pa^\al f_2,\pa^\al \na \si\rng.
\end{multline*}
Then, it follows from Cauchy-Schwarz inequality that
\begin{multline}
\label{thm.ap.p07}
 \frac{d}{dt}\lng \pa^\al v,\pa^\al\na \si\rng +\la(\|\pa^\al \na \si\|^2+\|\pa^\al\si\|^2)\\
 \leq C\|\pa^\al \na\cdot v\|^2+
  C(\|\pa^\al \Phi(\si)\|^2+\|\pa^\al f_1\|^2+\|\pa^\al f_2\|^2).
\end{multline}
Noticing that $\Phi(\si)$ is smooth in $\si$ with $\Phi(0)=\Phi'(0)=0$ and $f_1,f_2$ are quadratically nonlinear, one has from \eqref{thm.ap.1} that
\begin{equation*}
   \|\pa^\al \Phi(\si)\|^2+\|\pa^\al f_1\|^2+\|\pa^\al f_2\|^2
   \leq C\|[\si,v,\widetilde{B}]\|_{N}^2\|\na [\si,v]\|_{N-1}^2.
\end{equation*}
Plugging this into \eqref{thm.ap.p07} and taking summation over $|\al|\leq N-1$ yields \eqref{thm.ap.p03}.

\medskip

\noindent{\it Step 3.} It holds that
\begin{eqnarray}
\label{thm.ap.p08}
\frac{d}{dt}\CE_{N,2}^{\rm int}(V)+\la \|\widetilde{E}\|_{N-1}^2&\leq& C\|[\si,v]\|_N^2 +C\|v\|_{N}\|\na \widetilde{B}\|_{N-2}\\
&&+ C\|[\si,v,\widetilde{B}]\|_N^2\|\na[\si,v]\|_{N-1}^2,\nonumber
\end{eqnarray}
where $\CE_{N,2}^{\rm int}(\cdot)$ is defined by
\begin{equation*}
    \CE_{N,2}^{\rm int}(V)=\sum_{|\al|\leq N-1} \lng \pa^\al v,\pa^\al\widetilde{E}\rng.
\end{equation*}
In fact, for $|\al|\leq N-1$, applying $\pa^\al$ to \eqref{thm.ap.p06}, multiplying it by $\pa^\al \widetilde{E}$, taking integration in $x$ and then using the third equation of \eqref{s.e} gives
\begin{multline*}
 \frac{d}{dt}\lng \pa^\al v,\pa^\al\widetilde{E}\rng +\frac{1}{\sqrt{\ga}}\|\pa^\al \widetilde{E}\|^2\\
=\frac{1}{\sqrt{\ga}}\|\pa^\al v\|^2+\frac{1}{\sqrt{\ga}}\lng \pa^\al v,\na\times \pa^\al \widetilde{B} \rng
+\frac{1}{\ga}\lng \pa^\al v, \pa^\al [\si v+\Phi(\si) v]\rng\\
-\lng \na \pa^\al \si+\frac{1}{\sqrt{\ga}}\pa^\al v,\pa^\al \widetilde{E}\rng+\lng \pa^\al f_2, \pa^\al \widetilde{E}\rng,
\end{multline*}
which from Cauchy-Schwarz inequality further implies
\begin{multline*}
\frac{d}{dt}\lng \pa^\al v,\pa^\al\widetilde{E}\rng +\la \|\pa^\al \widetilde{E}\|^2 \leq C\|[\si,v]\|_{N}^2+C\|v\|_{N}\|\na\widetilde{B}\|_{N-2}\\
+ C\|[\si,v,\widetilde{B}]\|_N^2\|\na[\si,v]\|_{N-1}^2.
\end{multline*}
Thus, \eqref{thm.ap.p08} follows from taking summation of the above estimate over $|\al|\leq N-1$.

\medskip

\noindent{\it Step 4.} It holds that
\begin{equation}\label{thm.ap.p09}
   \frac{d}{dt}\CE_{N,3}^{\rm int}(V)+\la \|\na \widetilde{B}\|_{N-2}^2\leq C\|[v,\widetilde{E}]\|_{N-1}^2+ C\|\si\|_N^2\|\na v\|_{N-1}^2,
\end{equation}
where $\CE_{N,3}^{\rm int}(\cdot)$ is defined by
\begin{equation*}
    \CE_{N,3}^{\rm int}(V)= \sum_{|\al|\leq N-2}\lng\na\times \pa^\al \widetilde{E}, \pa^\al \widetilde{B}\rng.
\end{equation*}
In fact, for $|\al|\leq N-2$, applying $\pa^\al$ to the third equation of \eqref{s.e}, multiplying it by $\pa^\al \na\times \widetilde{B}$, taking integration in $x$ and then using the fourth equation of \eqref{s.e} implies
\begin{multline*}
\frac{d}{dt}\lng\na\times \pa^\al \widetilde{E}, \pa^\al \widetilde{B}\rng +\frac{1}{\sqrt{\ga}}\|\na\times \pa^\al\widetilde{B}\|^2\\
=\frac{1}{\sqrt{\ga}}\|\na\times \pa^\al \widetilde{E}\|^2-\frac{1}{\sqrt{\ga}}\lng \pa^\al v,\na\times \pa^\al\widetilde{B}\rng-\frac{1}{\sqrt{\ga}}\lng \pa^\al[\si v+\Phi(\si)v],\na\times  \pa^\al\widetilde{B}\rng,
\end{multline*}
which gives \eqref{thm.ap.p09} by further using Cauchy-Schwarz inequality and taking summation over $|\al|\leq N-2$, where we also used
\begin{equation*}
    \|\pa^\al\pa_i\widetilde{B}\|=\|\pa_i\De^{-1}\na\times (\na\times \pa^\al\widetilde{B})\|\leq C\|\na\times \pa^\al\widetilde{B}\|
\end{equation*}
for each $1\leq i\leq 3$, due to the fact that $\pa_i\De^{-1}\na$ is bounded from $L^p$ to itself for $1<p<\infty$; see \cite{Stein}.

\medskip

\noindent{\it Step 5.} Now, following four steps above, we are ready to  prove \eqref{thm.ap.2}. Here, we first remark that \eqref{thm.ap.p03} implies that the dissipation of $\si$ can be recovered from that of $v$, \eqref{thm.ap.p08} implies that the dissipation of $\widetilde{E}$ can be recovered from that of $v$, $\si$ and $\widetilde{B}$, and \eqref{thm.ap.p09} implies that the dissipation of $\widetilde{B}$ can be recovered from that of $v$ and $\widetilde{E}$. The key observation is that the second term on the r.h.s. of \eqref{thm.ap.p08} is the product of dissipations of $v$ and $\widetilde{B}$ so that it is possible to recover the full dissipation of $v,\si,\widetilde{E}$ and $\widetilde{B}$ by taking a proper linear combination of all estimates. In fact, let us define
\begin{equation*}
    \CE_N(V(t))=\|V(t)\|_N^2+\sum_{i=1}^3\kappa_i \CE_{N,i}^{\rm int}(V(t)),
\end{equation*}
that is,
\begin{eqnarray}
% \nonumber to remove numbering (before each equation)
  \CE_N(V(t)) &=& \|[\si,v,\widetilde{E},\widetilde{B}]\|^2_N+\kappa_1\sum_{|\al|\leq N-1}\lng\pa^\al \na \si, \pa^\al v\rng\nonumber\\
  &&+\kappa_2\sum_{|\al|\leq N-1} \lng \pa^\al v,\pa^\al\widetilde{E}\rng
  +\kappa_3\sum_{|\al|\leq N-2}\lng\na\times \pa^\al \widetilde{E}, \pa^\al \widetilde{B}\rng\label{def.energy}
\end{eqnarray}
for constants $0<\kappa_3\ll \kappa_2\ll \kappa_1\ll 1$ to be determined. Notice that as long as $0<\kappa_i \ll 1$ is small enough for $i=1,2,3$, then $ \CE_N(V)\sim \|V\|_N^2$ holds true. Moreover, by letting $0<\kappa_3\ll \kappa_2\ll \kappa_1\ll 1$ be small enough with $\kappa_2^{3/2}\ll \kappa_3$, the sum of \eqref{thm.ap.p01}, \eqref{thm.ap.p03}$\times \kappa_1$, \eqref{thm.ap.p08}$\times \kappa_2$ and \eqref{thm.ap.p09}$\times \kappa_3$ implies that there is $\la>0$, $C>0$ such that \eqref{thm.ap.2} also holds true with $\CD_N(\cdot)$ defined in \eqref{def.ef.d}. Here, we used the following Cauchy-Schwarz inequality
\begin{equation*}
   2\kappa_2\|v\|_{N}\|\na \widetilde{B}\|_{N-2}\leq \kappa_2^{1/2}\|v\|_N^2+\kappa_2^{3/2}\|\na \widetilde{B}\|_{N-2}^2,
\end{equation*}
and due to  $\kappa_2^{3/2}\ll \kappa_3$, both terms on the r.h.s. of the above inequality were absorbed. This completes the proof of Theorem \ref{thm.ap}.
\end{proof}

\begin{remark}\label{rem.hypo}
The main idea for the proof of Theorem \ref{thm.ap}, particularly construction of the interactive functionals, is inspired by the recent studies of some degenerately dissipative kinetic equations \cite{D-Hypo,DS-VMB} and \cite{Vi}. In fact, although the nonlinear system \eqref{s.e} is degenerately dissipative, interplay between the first-order linear conservative terms and the zero-order  degenerately dissipative terms indeed yields the dissipation of all the components in the solution. This is also easier to be seen from the Fourier analysis of the linearized homogeneous system; see Theorem \ref{thm.tff} and its proof later on.

\end{remark}

\subsection{Proof of global existence} In this subsection we shall prove Proposition \ref{prop.s.exi}. Since \eqref{s.e} is a quasi-linear symmetric hyperbolic system, short-time existence follows from much more general case showed in \cite[Theorem 1.2, Proposition 1.3 and Proposition 1.4 in Chapter 16]{Ta}; see also \cite{Kato}.

\begin{lemma}[local existence]\label{lem.local}
Suppose that $V_0\in H^N(\R^3)$ satisfies \eqref{s.e.cc}. Then, there is $T_0>0$ such that the Cauchy problem \eqref{s.e}-\eqref{s.e.id} admits a unique solution on $[0,T_0)$ with
\begin{equation*}
    V\in C([0,T_0);H^{N}(\R^3))\cap Lip([0,T_0);H^{N-1}(\R^3)).
\end{equation*}
\end{lemma}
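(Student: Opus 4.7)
The plan is to recognize \eqref{s.e} as a quasi-linear symmetric hyperbolic system with semilinear lower order terms, and then to invoke the classical Picard iteration / energy method. First I would rewrite the principal part in symmetric hyperbolic form: dividing the $\sigma$ and $v$ equations appropriately (or multiplying by the symmetrizer $\mathrm{diag}(1,\ldots,1)$ after noticing that the coefficient $\frac{\gamma-1}{2}\sigma+1$ appears symmetrically in front of $\nabla\cdot v$ in the $\sigma$-equation and in front of $\nabla\sigma$ in the $v$-equation), one puts the first two equations of \eqref{s.e} into the standard compressible-Euler symmetric form $A_0(\sigma)\partial_t[\sigma,v]+\sum_j A_j(v)\partial_j[\sigma,v]=\text{lower order}$, with $A_0$ uniformly positive definite under the smallness hypothesis that will be induced from $\|\sigma\|_N\leq 1$ via Sobolev embedding. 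The Maxwell block $(\widetilde{E},\widetilde{B})$ is automatically symmetric hyperbolic with constant coefficients since $\nabla\times$ is skew-adjoint, and the coupling $[\widetilde{E},v\times\widetilde{B}]$ is zeroth order. Hence the full system fits the Kato/Majda framework.

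Next, I would construct the solution by a standard iteration scheme: set $V^0\equiv V_0$ and, given $V^k$, define $V^{k+1}$ as the solution of the \emph{linear} symmetric hyperbolic system obtained by freezing the coefficients $A_j(V^k)$ and the lower-order source $G(V^k)$ in the principal equation, so that existence of $V^{k+1}\in C([0,T];H^N)\cap \mathrm{Lip}([0,T];H^{N-1})$ is immediate from linear theory. The energy calculation of Theorem \ref{thm.ap}, carried out on the linearized system (and now not trying to extract any dissipation, only a short-time Gr\"onwall bound), yields
\begin{equation*}
\tfrac{d}{dt}\|V^{k+1}(t)\|_N^2\leq C\,P(\|V^k(t)\|_N)\,\|V^{k+1}(t)\|_N^2+C\,P(\|V^k(t)\|_N)
\end{equation*}
with a polynomial $P$. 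Choosing $T_0=T_0(\|V_0\|_N)>0$ small enough, I can close an induction proving the iteration-uniform bound $\sup_{[0,T_0]}\|V^k(t)\|_N\leq 2\|V_0\|_N$, in particular maintaining $\|\sigma^k\|_N\leq 1$ so that the symmetrizer stays uniformly positive definite.

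Then I would show $\{V^k\}$ is Cauchy in $C([0,T_0];L^2)$ by energy estimates on the difference $W^{k+1}=V^{k+1}-V^k$, using that the coefficients are Lipschitz in $V$ together with the iteration-uniform $H^N$ bound. This produces a limit $V\in L^\infty([0,T_0];H^N)\cap C([0,T_0];L^2)$. Weak-$\ast$ lower semicontinuity gives the $H^N$ bound for $V$, interpolation upgrades convergence to $C([0,T_0];H^{N'})$ for any $N'<N$, and strong continuity $V\in C([0,T_0];H^N)$ follows in the standard Bona--Smith fashion (regularize initial data, pass to the limit, and use continuity of $\|V(t)\|_N$). The Lipschitz regularity in $H^{N-1}$ is then read off the equation \eqref{s.e} itself, since each term on the right-hand side lies in $L^\infty([0,T_0];H^{N-1})$. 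Uniqueness comes from the same $L^2$-energy argument applied to the difference of two putative solutions.

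Finally, I would verify that the constraint \eqref{s.e.cc} propagates to $\nabla\cdot\widetilde{E}(t)=-\frac{1}{\sqrt\gamma}[\sigma(t)+\Phi(\sigma(t))]$ and $\nabla\cdot\widetilde{B}(t)=0$ for $t\in[0,T_0)$: taking the divergence of the third and fourth equations of \eqref{s.e} and using the first equation to replace $\partial_t\sigma$ shows that $q(t):=\nabla\cdot\widetilde{E}+\frac{1}{\sqrt\gamma}[\sigma+\Phi(\sigma)]$ and $\nabla\cdot\widetilde{B}$ satisfy homogeneous transport-type equations with zero initial data, hence vanish identically. The main technical obstacle, as usual in quasi-linear hyperbolic existence, is the strong $H^N$ continuity of the limit (as opposed to weak-$\ast$), which requires the Bona--Smith mollification trick; the rest of the argument is entirely parallel to Kato's treatment cited in the paper.
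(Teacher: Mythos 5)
Your proposal is correct and follows essentially the same route as the paper, which simply observes that \eqref{s.e} is a quasi-linear symmetric hyperbolic system (the coefficient $\frac{\ga-1}{2}\si+1$ already appears symmetrically, so the symmetrizer is the identity) and cites the classical local existence theory of Taylor and Kato. What you have written is a correct sketch of the proof of that cited theorem, together with the (also correct) propagation of the divergence constraints, which the paper leaves implicit.
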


Moreover, the local solution can be extent as long as its $W^{1,\infty}$-norm is bounded; see \cite[Proposition 1.5 in Chapter 16]{Ta}.

\begin{lemma}[extension]\label{lem.extension}
Suppose that $V\in C([0,T);H^N(\R^3))$ solves the system \eqref{s.e} for $t\in (0,T)$ with $T>0$. Assume also that
\begin{equation*}
    \sup_{0\leq t<T}\|V(t)\|_{W^{1,\infty}}<\infty.
\end{equation*}
Then, there exists $T_1>T$ such that $V$ extends to a solution to  \eqref{s.e}, belonging to $C([0,T_1);H^N(\R^3))$.
\end{lemma}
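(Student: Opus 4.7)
The plan is to use the standard continuation argument for quasilinear symmetric hyperbolic systems: upgrade the assumed $W^{1,\infty}$ bound to a uniform $H^N$ bound via a tame energy estimate, then restart the local existence theory at a time just before $T$ to extend past $T$.

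First I would derive the tame a priori estimate
\[
\frac{d}{dt}\|V(t)\|_N^2 \leq C\bigl(1+\|V(t)\|_{W^{1,\infty}}\bigr)\|V(t)\|_N^2.
\]
For each $|\al|\leq N$, apply $\pa^\al$ to \eqref{s.e} and pair with $\pa^\al V$ in $L^2$. The quasilinear hyperbolic part is symmetric — the coefficient $\frac{\ga-1}{2}\si+1$ appears in front of both $\na\cdot v$ in the $\si$-equation and $\na\si$ in the $v$-equation — and the Maxwell operator $\na\times$ is skew-adjoint, so the principal-order contributions cancel after integration by parts. What remains are commutators of the form $[\pa^\al,v\cdot\na]V$ and $[\pa^\al,\si]\na v$, together with tame contributions from $\Phi(\si)$ and the Lorentz term $v\times\widetilde{B}$; all are controlled by the Moser--Kato--Ponce commutator estimate
\[
\|[\pa^\al,f]g\|_{L^2} \leq C\bigl(\|\na f\|_{L^\infty}\|g\|_{H^{N-1}} + \|f\|_{H^N}\|g\|_{L^\infty}\bigr)
\]
and the tame composition estimate for the smooth function $\Phi$ (which vanishes quadratically at $0$). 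The zero-order terms — relaxation $-v/\sqrt{\ga}$ and coupling $\widetilde{E}/\sqrt{\ga}$ — contribute linearly to $\|V\|_N^2$.

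Since $M := \sup_{0\leq t<T}\|V(t)\|_{W^{1,\infty}} < \infty$ by hypothesis, Gronwall's inequality applied to the above gives
\[
K := \sup_{0\leq t<T}\|V(t)\|_N \leq \|V(0)\|_N\exp\bigl(C(1+M)T\bigr) < \infty.
\]
Now I invoke Lemma \ref{lem.local}: the local existence time $\tau$ it produces depends monotonically on the $H^N$ size of the initial data, so there is a uniform $\tau=\tau(K)>0$ such that for any $t_0\in[0,T)$ the Cauchy problem with data $V(t_0)$ admits a unique solution in $C([t_0,t_0+\tau);H^N(\R^3))$. Choosing $t_0\in(T-\tau/2,T)$ and using the uniqueness clause of Lemma \ref{lem.local} to identify this local solution with $V$ on $[t_0,T)$, setting $T_1=t_0+\tau>T$ produces the desired extension in $C([0,T_1);H^N(\R^3))$.

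I expect the main obstacle to be the tame commutator estimate in the first step: the amplification factor on the right-hand side must involve only $\|V\|_{W^{1,\infty}}$ and not higher Sobolev norms, which requires the tame (as opposed to merely algebra-type) versions of the product, commutator, and composition inequalities, applied carefully to the quasilinear coefficients $\frac{\ga-1}{2}\si+1$ and to $\Phi(\si)$. Once that estimate is in hand, the Gronwall step and the restart via Lemma \ref{lem.local} are routine.
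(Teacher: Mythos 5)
Your argument is correct and is precisely the standard continuation proof for quasilinear symmetric hyperbolic systems; the paper does not prove this lemma itself but simply cites it from Taylor (Proposition 1.5 in Chapter 16), whose proof is exactly the tame energy estimate plus Gronwall plus restart-with-uniform-existence-time that you describe. The only cosmetic imprecision is that the Gronwall factor should be $C(M)\bigl(1+\|V(t)\|_{W^{1,\infty}}\bigr)$ with $C(M)$ depending on $\|V\|_{L^\infty}$ (through the composition $\Phi(\si)$ and the coefficient $\tfrac{\ga-1}{2}\si+1$, which must stay in the smooth/hyperbolic range) rather than a universal constant, but this is harmless since $M<\infty$ by hypothesis.
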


\noindent{\bf Proof of Proposition \ref{prop.s.exi}:} Let $\la>0, C>0$ be defined in \eqref{thm.ap.2} and $C_2>0$ be chosen such that
\begin{equation*}
    \|\si\|_N^2\leq C_2\CE_N(V)
\end{equation*}
for $V=[\si,v,\widetilde{E},\widetilde{B}]$. Fix $\de_2>0$ such that
\begin{equation*}
    C[(2\de_2)^{1/2}+2\de_2]\leq \frac{\la}{2},\ \ 2C_2\de_2\leq 1,
\end{equation*}
and let $V_0\in H^N(\R^3)$ satisfy \eqref{s.e.cc} and $\CE_N(V_0)\leq \de_2$.
Now, let us define
\begin{equation*}
    T_\ast=\sup\left\{t\geq 0
    \left|
    \begin{array}{c}
    \exists\,V\in C([0,t);H^N(\R^3))\ \text{to the Cauchy problem}\\
    \text{\eqref{s.e}-\eqref{s.e.id} with}\ \sup\limits_{0\leq s<t}\CE_N(V(s))\leq 2\de_2
    \end{array}
    \right.\right\}.
\end{equation*}
From Lemma \ref{lem.local} and continuity of $\CE_N(V(t))$ in time, $T_\ast>0$ holds true. Suppose that $T_\ast$ is finite. Then, there exists $V\in C([0,T_\ast);H^N(\R^3))$ to the Cauchy problem \eqref{s.e}-\eqref{s.e.id} with $\sup_{0\leq s<T_\ast}\CE_N(V(s))\leq 2\de_2$. Notice that the case when $\sup_{0\leq s<T_\ast}\CE_N(V(s))< 2\de_2$ can not occur due to the definition of $T_\ast$ and Lemma \ref{lem.extension} as well as continuity of $\CE_N(V(t))$. Thus, if $T_\ast$ is finite, then
\begin{equation}\label{prop.s.exi.p1}
    \sup_{0\leq s<T_\ast}\CE_N(V(s))= 2\de_2.
\end{equation}
On the other hand, by the choices of $\de_2$ and $V_0$, it follows from Theorem \ref{thm.ap} that
\begin{equation}\label{prop.s.exi.p2}
    \sup_{0\leq t<T_\ast}\CE_N(V(t))+\frac{\la}{2}\int_0^{T_\ast} \CD_N(V(t))dt\leq \de_2.
\end{equation}
This is a contradiction to \eqref{prop.s.exi.p1}. Then, $T_\ast=\infty$ holds true. Here, we remark that although Theorem \ref{thm.ap} holds for smooth solutions, \eqref{prop.s.exi.p2} is still true for  $V\in C([0,T_\ast);H^N(\R^3))$. Finally, uniqueness of solutions and  Lipschitz continuity in  \eqref{prop.s.exi.1} follow from Lemma \ref{lem.local}, and \eqref{prop.s.exi.2} holds for any $t\geq 0$ by Theorem \ref{thm.ap} and the choice of $\de_2$.
This completes the proof of Proposition \ref{prop.s.exi}.

%\newpage
\section{Linearized homogeneous system}\label{sec.lhs}

In this section, in order to study in the next section the time-decay property of solutions to the nonlinear system \eqref{s.e} or \eqref{s.cr}, we are concerned with the following Cauchy problem on the linearized homogeneous system corresponding to the reformulated version \eqref{s.cr}:
\begin{equation}\label{lhs}
\left\{\begin{array}{l}
  \dis \pa_t \rho+\na \cdot u=0,\\[3mm]
  \dis \pa_t u+ \ga \na \rho+E+u=0,\\[3mm]
  \dis \pa_t E-\na \times B-u= 0,\\[3mm]
  \dis \pa_t B+\na \times E=0,\\[3mm]
  \dis \na\cdot E=-\rho,\ \ \na\cdot B=0,\ \ t>0,x\in \R^3,
\end{array}\right.
\end{equation}
with given initial data
\begin{equation}\label{lhs.id}
   U|_{t=0}=U_0:=[\rho_0,u_0,E_0,B_0],\ \ x\in \R^3,
\end{equation}
satisfying the compatible condition
\begin{equation}\label{lhs.cc}
    \na\cdot E_0=-\rho_0,\ \ \na\cdot B_0=0.
\end{equation}
Here and through this section, we always denote $U=[\rho,u,E,B]$ as the solution to the first-order hyperbolic system \eqref{lhs}. As mentioned before, we remark that in the  case of the linearized  homogeneous system, it is more convenient to consider \eqref{lhs} than the linearized version from \eqref{s.e}, and on the other hand, since smooth solutions to the nonlinear systems \eqref{s.e} and \eqref{s.cr} are equivalent, time-decay properties of the solution to \eqref{s.cr} can be directly applied to \eqref{s.e}.

The rest of this section is arranged as follows. In Section \ref{sec.lhs.1}, we derive a time-frequency Lyapunov inequality, which leads to the pointwise time-frequency upper-bound of solutions. In Section \ref{sec.lhs.2}, based on this pointwise upper-bound, we obtain the elementary $L^p$-$L^q$ time-decay property of the linear solution operator for the Cauchy problem  \eqref{lhs}-\eqref{lhs.id}. In Section \ref{sec.lhs.3}, we study the representation of the Fourier transform of solutions. In  Section \ref{sec.lhs.4}, we apply results of Section \ref{sec.lhs.3} to obtain the refined $L^p$-$L^q$ time-decay property for each component in the linear solution $[\rho,u,E,B]$ to the Cauchy problem  \eqref{lhs}-\eqref{lhs.id}.

Through this section, we also introduce some additional notations.
For an integrable function $f:
\R^3\to\R$, its Fourier transform
%$\widehat{g}=\CF g$
is defined by
\begin{equation*}
  \hat{f}(k)= \int_{\R^3} e^{-i x\cdot k} f(x)dx, \quad
  x\cdot
   k:=\sum_{j=1}^3 x_jk_j,
   \quad
   k\in\R^3,
\end{equation*}
where $i =\sqrt{-1}\in \mathbb{C}$ is the imaginary
unit. For two complex numbers or vectors $a$ and $b$, $(a\mid
b)$ denotes the dot product of $a$ with the complex conjugate of $b$.

\subsection{Time-frequency Lyapunov functional}\label{sec.lhs.1}

In this subsection, we apply the energy method in the Fourier space to the Cauchy problem \eqref{lhs}-\eqref{lhs.cc} to show that there exists a time-frequency Lyapunov functional which is equivalent with $|\hat{U}(t,k)|^2$ and moreover its dissipation rate can also be characterized by the functional itself. The method of proof is similar to that for the proof of Theorem \ref{thm.ap} in the nonlinear case. Once again, as in Remark \ref{rem.hypo}, we mention \cite{D-Hypo,DS-VMB} and \cite{Vi} for the similar idea. Let us state the main result of this subsection as follows.

\begin{theorem}\label{thm.tff}
Let $U(t,x)$, $t> 0,x\in\R^3$,  be a well-defined solution to the system \eqref{lhs}. There is a time-frequency Lyapunov functional $\CE(\hat{U}(t,k))$ with
\begin{equation}\label{thm.tff.1}
  \CE(\hat{U})\sim |\hat{U}|^2:=|\hat{\rho}|^2+|\hat{u}|^2+|\hat{E}|^2+|\hat{B}|^2
\end{equation}
satisfying that there is $\la>0$ such that the Lyapunov inequality
\begin{equation}\label{thm.tff.2}
    \frac{d}{dt}\CE(\hat{U}(t,k))+\frac{\la |k|^2}{(1+|k|^2)^2}\CE(\hat{U}(t,k))\leq 0
\end{equation}
holds for any $t> 0$ and $k\in \R^3$.
\end{theorem}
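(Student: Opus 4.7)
The idea is to mimic the energy argument of Theorem \ref{thm.ap} at the Fourier level. Taking Fourier transform of \eqref{lhs} reduces the problem to a linear ODE in $t$ for each $k\in\R^3$; I would build $\CE(\hat U(t,k))$ as the sum of the basic energy $|\hat U|^2$ plus three interactive cross terms weighted by negative powers of $(1+|k|^2)$, so that the combined dissipation is comparable to $\tfrac{|k|^2}{(1+|k|^2)^2}|\hat U|^2$.

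First, pairing the Fourier-transformed equations with $\ga\bar{\hat\rho}$, $\bar{\hat u}$, $\bar{\hat E}$, $\bar{\hat B}$ and taking real parts, the $\hat\rho$--$\hat u$, $\hat E$--$\hat u$ and $\hat E$--$\hat B$ cross terms vanish by anti-Hermitian symmetry, yielding
\[
\tfrac{1}{2}\tfrac{d}{dt}\bigl(\ga|\hat\rho|^2+|\hat u|^2+|\hat E|^2+|\hat B|^2\bigr)+|\hat u|^2=0,
\]
which gives dissipation only in $\hat u$. I would then introduce the three interactive functionals
\[
F_1:=\rmre(\hat u\mid ik\hat\rho),\quad F_2:=\rmre(\hat u\mid\hat E),\quad F_3:=-\rmre(ik\times\hat E\mid\hat B),
\]
the Fourier analogues of $\CE_{N,1}^{\rm int}$, $\CE_{N,2}^{\rm int}$, $\CE_{N,3}^{\rm int}$ from Steps 2--4 of the proof of Theorem \ref{thm.ap}. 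Differentiating each in $t$, substituting from the Fourier equations, and using the constraints $ik\cdot\hat E=-\hat\rho$ (which yields $\rmre(\hat E\mid ik\hat\rho)=|\hat\rho|^2$) and $k\cdot\hat B=0$ (which yields $ik\times(ik\times\hat B)=|k|^2\hat B$), then bounding the leftover cross terms by Cauchy--Schwarz---with a frequency-adapted splitting $|k||\hat u||\hat B|\le\mu|k|^2|\hat B|^2/(1+|k|^2)+C_\mu(1+|k|^2)|\hat u|^2$ in the $F_2$ identity---gives
\[
\tfrac{d}{dt}F_1+\la(1+|k|^2)|\hat\rho|^2\le C(1+|k|^2)|\hat u|^2,
\]
\[
\tfrac{d}{dt}F_2+\la|\hat E|^2\le C_\mu(1+|k|^2)|\hat u|^2+\mu\tfrac{|k|^2|\hat B|^2}{1+|k|^2},
\]
\[
\tfrac{d}{dt}F_3+\la|k|^2|\hat B|^2\le|k|^2|\hat E|^2+C|\hat u|^2.
\]

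Dividing these by $(1+|k|^2)$, $(1+|k|^2)$, and $(1+|k|^2)^2$ respectively converts the $|\hat u|^2$ costs into absorbable ones and yields the clean dissipations $|\hat\rho|^2$, $|\hat E|^2/(1+|k|^2)$, $|k|^2|\hat B|^2/(1+|k|^2)^2$. I would then set
\[
\CE(\hat U):=\alpha\bigl(\ga|\hat\rho|^2+|\hat u|^2+|\hat E|^2+|\hat B|^2\bigr)+\tfrac{\kappa_1 F_1+\kappa_2 F_2}{1+|k|^2}+\tfrac{\kappa_3 F_3}{(1+|k|^2)^2},
\]
with $\alpha$ large and $0<\kappa_3\ll\kappa_2\ll\kappa_1\ll 1$ to be chosen. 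The elementary bound $|k|/(1+|k|^2)\le 1/2$ controls each weighted cross term by a small multiple of $|\hat U|^2$, so \eqref{thm.tff.1} follows. Summing the four weighted estimates and absorbing cross costs then produces
\[
\tfrac{d}{dt}\CE+\la\bigl[|\hat u|^2+|\hat\rho|^2+\tfrac{|\hat E|^2}{1+|k|^2}+\tfrac{|k|^2|\hat B|^2}{(1+|k|^2)^2}\bigr]\le 0,
\]
and since each bracketed term dominates $\tfrac{|k|^2}{(1+|k|^2)^2}$ times the corresponding square, \eqref{thm.tff.2} follows via \eqref{thm.tff.1}.

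The main obstacle will be the calibration of the weights so that the two remaining cross costs close: the $\kappa_2\mu|k|^2|\hat B|^2/(1+|k|^2)^2$ produced by $F_2$ must be absorbed into the $\kappa_3\la|k|^2|\hat B|^2/(1+|k|^2)^2$ dissipation from $F_3$ (forcing $\kappa_2\mu\ll\kappa_3\la$), while the $\kappa_3|k|^2|\hat E|^2/(1+|k|^2)^2\le\kappa_3|\hat E|^2/(1+|k|^2)$ from $F_3$ must be absorbed into the $\kappa_2\la|\hat E|^2/(1+|k|^2)$ from $F_2$ (forcing $\kappa_3\ll\kappa_2\la$). Both conditions are simultaneously attainable by first choosing $\mu$ small and then fixing the $\kappa_i$ hierarchy $0<\kappa_3\ll\kappa_2\ll\kappa_1\ll 1$ with $\kappa_3\sim\kappa_2\la$ and $\mu\ll\la^2$, exactly in the spirit of Step 5 of the proof of Theorem \ref{thm.ap}. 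This delicate balance is the Fourier-space manifestation of the regularity-loss structure noted in the Remark after Proposition \ref{prop.s.exi}.
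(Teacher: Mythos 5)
Your proposal is correct and follows essentially the same route as the paper: the same basic Fourier energy identity, the same three interactive functionals $\FR(\hat u\mid ik\hat\rho)$, $\FR(\hat u\mid\hat E)$, $\FR(-ik\times\hat B\mid\hat E)$, and the same hierarchy $0<\kappa_3\ll\kappa_2\ll\kappa_1\ll1$ with the $\hat u$--$\hat B$ cross term absorbed into the $\hat B$-dissipation. The only cosmetic differences are your weight $1/(1+|k|^2)$ on $F_2$ in place of the paper's $|k|^2/(1+|k|^2)^2$ (yielding the slightly stronger but equally sufficient dissipation $|\hat E|^2/(1+|k|^2)$) and your explicit parameter $\mu$ where the paper uses the split $\kappa_2^{1/2},\kappa_2^{3/2}$ with $\kappa_2^{3/2}\ll\kappa_3$.
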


\begin{proof}
It is based on the Fourier analysis of the system \eqref{lhs}. For that, after taking Fourier transform in $x$ for \eqref{lhs}, $\hat{U}=[\hat{\rho},\hat{u},\hat{E},\hat{B}]$ satisfies
\begin{equation}\label{lhs.f}
\left\{\begin{array}{l}
  \dis \pa_t \hat{\rho}+i k\cdot \hat{u}=0,\\[1mm]
  \dis \pa_t \hat{u}+ \ga i k \hat{\rho}+\hat{E}+\hat{u}=0,\\[1mm]
  \dis \pa_t \hat{E}-ik \times \hat{B}-\hat{u}= 0,\\[1mm]
  \dis \pa_t \hat{B}+i k \times \hat{E}=0,\\[1mm]
  \dis i k\cdot \hat{E}=-\hat{\rho},\ \ k\cdot \hat{B}=0,\ \ t>0,k\in \R^3.
\end{array}\right.
\end{equation}
First of all, it is straightforward to obtain from the first four equations of \eqref{lhs.f} that
\begin{equation}\label{thm.tff.p1}
    \frac{1}{2}\pa_t |[\sqrt{\ga} \hat{\rho},\hat{u},\hat{E},\hat{B}]|^2+|\hat{u}|^2=0.
\end{equation}
By taking the complex dot product of the second equation of \eqref{lhs.f} with $ik\hat{\rho}$, using integration by parts in $t$ and then replacing $\pa_t\hat{\rho}$ by the first equation of \eqref{lhs.f}, one has
\begin{equation*}
    \pa_t (\hat{u}\mid ik \hat{\rho})+(1+\ga |k|^2)|\hat{\rho}|^2 =|k\cdot \hat{u}|^2-(\hat{u}\mid i k\hat{\rho}),
\end{equation*}
which by taking the real part and using the Cauchy-Schwarz inequality, implies
\begin{equation*}
    \pa_t \FR (\hat{u}\mid ik \hat{\rho})+\la (1+|k|^2)|\hat{\rho}|^2 \leq C(1+|k|^2)|\hat{u}|^2.
\end{equation*}
Dividing it by $1+|k|^2$ gives
\begin{equation}\label{thm.tff.p2}
    \pa_t \frac{\FR (\hat{u}\mid ik \hat{\rho})}{1+|k|^2}+\la |\hat{\rho}|^2 \leq C|\hat{u}|^2.
\end{equation}
In a similar way, by taking the complex dot product of the second equation of \eqref{lhs.f} with $\hat{E}$, using integration by part in $t$ and then replacing $\pa_t\hat{E}$ by the third equation of \eqref{lhs.f}, one has
\begin{equation}\label{thm.tff.p3}
    \pa_t (\hat{u}\mid \hat{E})+\ga |k\cdot \hat{E}|^2+|\hat{E}|^2=-(\hat{u}\mid \hat{E})+(\hat{u}\mid ik\times \hat{B})+|\hat{u}|^2,
\end{equation}
where we used $ik\cdot \hat{E}=-\hat{\rho}$ to obtain
\begin{equation*}
    (\ga ik\hat{\rho}\mid \hat{E})=\ga(-ik\cdot \hat{E}\mid ik\cdot \hat{E})=\ga |k\cdot \hat{E}|^2.
\end{equation*}
Taking the real part of \eqref{thm.tff.p3} and using the Cauchy-Schwarz inequality implies
\begin{equation*}
    \pa_t \FR (\hat{u}\mid \hat{E})+\la (|k\cdot \hat{E}|^2+|\hat{E}|^2)\leq C|\hat{u}|^2+\FR (\hat{u}\mid ik\times \hat{B}),
\end{equation*}
which further multiplying it by $|k|^2/(1+|k|^2)^2$ gives
\begin{equation}\label{thm.tff.p4}
    \pa_t \frac{|k|^2\FR (\hat{u}\mid \hat{E})}{(1+|k|^2)^2}+\frac{\la |k|^2(|k\cdot \hat{E}|^2+|\hat{E}|^2)}{(1+|k|^2)^2}\leq C|\hat{u}|^2+\frac{ |k|^2\FR (\hat{u}\mid ik\times \hat{B})}{(1+|k|^2)^2}.
\end{equation}
Similarly, it follows from equations of the  electromagnetic field in \eqref{lhs.f} that
\begin{equation*}
    \pa_t (-ik\times \hat{B}\mid \hat{E})+|k\times \hat{B}|^2=|k\times \hat{E}|^2-(ik\times \hat{B}\mid \hat{u}),
\end{equation*}
which after using Cauchy-Schwarz and dividing it by $(1+|k|^2)^2$, implies
\begin{equation}\label{thm.tff.p5}
     \pa_t \frac{\FR (-ik\times \hat{B}\mid \hat{E})}{(1+|k|^2)^2}+\frac{\la |k\times \hat{B}|^2}{(1+|k|^2)^2}\leq
     \frac{ |k|^2|\hat{E}|^2}{(1+|k|^2)^2}+C|\hat{u}|^2.
\end{equation}
Finally, let us define
\begin{eqnarray*}
% \nonumber to remove numbering (before each equation)
  \CE(\hat{U}(t,k)) &=& |[\sqrt{\ga} \hat{\rho},\hat{u},\hat{E},\hat{B}]|^2+ \kappa_1  \frac{\FR (\hat{u}\mid ik \hat{\rho})}{1+|k|^2}+\kappa_2 \frac{\FR (|k|^2\hat{u}\mid \hat{E})}{(1+|k|^2)^2}\\
  &&+\kappa_3 \frac{\FR (-ik\times \hat{B}\mid \hat{E})}{(1+|k|^2)^2}
\end{eqnarray*}
for constants $0< \kappa_3\ll \kappa_2\ll \kappa_1\ll 1$ to be chosen. Let $0<\kappa_i\ll 1$, $i=1,2,3$, be small enough such that \eqref{thm.tff.1} holds true. On the other hand, by letting $0< \kappa_3\ll \kappa_2\ll \kappa_1\ll 1$ be further small enough with $\kappa_2^{3/2}\ll \kappa_3$, the sum of \eqref{thm.tff.p1}, \eqref{thm.tff.p2}$\times \kappa_1$, \eqref{thm.tff.p4}$\times \kappa_2$ and \eqref{thm.tff.p5}$\times \kappa_3$ gives
\begin{equation}\label{thm.tff.p6}
    \pa_t \CE(\hat{U}(t,k))+\la |[\hat{\rho},\hat{u}]|^2+\frac{\la |k|^2}{(1+|k|^2)^2}|[\hat{E},\hat{B}]|^2\leq 0,
\end{equation}
where we used the identity $|k\times \hat{B}|^2=|k|^2|\hat{B}|^2$ due to $k\cdot \hat{B}=0$ and also used the following Cauchy-Schwarz inequality
\begin{equation*}
    \frac{ \kappa_2|k|^2\FR (\hat{u}\mid ik\times \hat{B})}{(1+|k|^2)^2}\leq  \frac{ \kappa_2^{1/2}|k|^4|\hat{u}|^2}{2(1+|k|^2)^2}+\frac{ \kappa_2^{3/2}|k|^2|\hat{B}|^2}{2(1+|k|^2)^2}.
\end{equation*}
Therefore, \eqref{thm.tff.2} follows from \eqref{thm.tff.p6} by noticing $\CE(\hat{U}(t,k))\sim |\hat{U}|^2$ and
\begin{equation*}
|[\hat{\rho},\hat{u}]|^2+\frac{|k|^2}{(1+|k|^2)^2}|[\hat{E},\hat{B}]|^2\geq \frac{\la |k|^2}{(1+|k|^2)^2}|\hat{U}|^2.
\end{equation*}
This completes the proof of Theorem \ref{thm.tff}.
\end{proof}

Theorem \ref{thm.tff} directly leads to the pointwise time-frequency estimate on the modular $|\hat{U}(t,k)|$ in terms of initial data modular $|\hat{U}_0(k)|$.

\begin{corollary}
Let $U(t,x)$, $t\geq 0,x\in\R^3$,  be a well-defined solution to the Cauchy problem \eqref{lhs}-\eqref{lhs.cc}. Then, there are $\la>0$, $C>0$ such that
\begin{equation}\label{cor.ptf.est}
   |\hat{U}(t,k)|\leq C e^{-\frac{\la |k|^2 t}{(1+|k|^2)^2}}|\hat{U}_0(k)|
\end{equation}
holds for any $t\geq  0$ and $k\in \R^3$.
\end{corollary}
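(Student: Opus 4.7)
The corollary is essentially immediate from Theorem \ref{thm.tff}; I would present it as a direct consequence of the Lyapunov inequality combined with the equivalence between $\CE(\hat{U})$ and $|\hat{U}|^2$. The plan has three short steps: integrate the differential inequality, transfer the resulting decay estimate on the Lyapunov functional back to the modulus $|\hat{U}|$, and then take square roots.

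Specifically, fix $k\in\R^3$ and view \eqref{thm.tff.2} as a scalar ODE inequality in $t$ for the nonnegative quantity $\CE(\hat{U}(t,k))$, with coefficient $\mu(k):=\lambda|k|^2/(1+|k|^2)^2 \geq 0$. Gr\"onwall's inequality (equivalently, multiplying through by the integrating factor $e^{\mu(k)t}$) yields
\begin{equation*}
\CE(\hat{U}(t,k)) \leq e^{-\mu(k)t}\,\CE(\hat{U}_0(k))
\end{equation*}
for all $t\geq 0$. The dependence on $k$ is harmless because $\mu(k)$ is independent of $t$; no uniformity in $k$ is needed at this stage.

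Next, I invoke the two-sided equivalence \eqref{thm.tff.1} at both times $t$ and $0$: there exist constants $c_1,c_2>0$ (coming from the choice of small parameters $\kappa_1,\kappa_2,\kappa_3$ in the construction of $\CE$) such that $c_1|\hat{U}|^2 \leq \CE(\hat{U}) \leq c_2|\hat{U}|^2$. Combining this with the previous display gives
\begin{equation*}
|\hat{U}(t,k)|^2 \leq c_1^{-1}\CE(\hat{U}(t,k)) \leq c_1^{-1}e^{-\mu(k)t}\CE(\hat{U}_0(k)) \leq c_1^{-1}c_2\,e^{-\mu(k)t}|\hat{U}_0(k)|^2.
\end{equation*}
Taking square roots and setting $C:=(c_2/c_1)^{1/2}$ and redefining $\lambda$ to be $\lambda/2$ yields \eqref{cor.ptf.est}.

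There is no real obstacle here; the technical work has already been done inside the proof of Theorem \ref{thm.tff}, where the delicate choice of the interactive terms $\kappa_1,\kappa_2,\kappa_3$ produces a dissipation rate with exactly the frequency weight $|k|^2/(1+|k|^2)^2$ that reflects the degenerate and regularity-loss nature of the system. The corollary just repackages that information pointwise in $k$. One stylistic remark I would include: the weight $\mu(k)$ behaves like $|k|^2$ for low frequencies and like $|k|^{-2}$ for high frequencies, which is the feature that will be exploited in the subsequent $L^p$--$L^q$ decay estimates.
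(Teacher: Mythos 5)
Your proof is correct and is exactly the argument the paper intends (the paper presents the corollary as an immediate consequence of Theorem \ref{thm.tff} without writing out the details): Gr\"onwall applied pointwise in $k$ to \eqref{thm.tff.2}, followed by the two-sided equivalence \eqref{thm.tff.1} at times $t$ and $0$, and a square root with the attendant halving of $\la$. Nothing is missing.
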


\subsection{$L^p$-$L^q$ time-decay property}\label{sec.lhs.2}

In this subsection we study the $L^p$-$L^q$ time-decay property of the solution $U$ to the Cauchy problem \eqref{lhs}-\eqref{lhs.id} on the basis of the pointwise time-frequency estimate \eqref{cor.ptf.est}. The refined  $L^p$-$L^q$ estimates on each component in $U$ will be given Section \ref{sec.lhs.4}.
Formally, the solution to the Cauchy problem \eqref{lhs}-\eqref{lhs.id} is denoted by
\begin{equation}
% \nonumber to remove numbering (before each equation)
U(t)=e^{tL}U_0,\label{def.lu}
\end{equation}
where $e^{tL}$, $t\geq 0$, is called the linear solution operator. The main result of this subsection is stated as follows.

\begin{theorem}\label{thm.lhs}
Let $1\leq p,r\leq 2\leq q\leq \infty$, $\ell\geq 0$ and let $m\geq 0$ be an integer. Define
\begin{equation}\label{def.index}
% \nonumber to remove numbering (before each equation)
 [\ell+3(\frac{1}{r}-\frac{1}{q})]_+\\
 =\left\{
\begin{array}{ll}
  [\ell+3(\frac{1}{r}-\frac{1}{q})]_-+1 & \ \ \text{when $r\neq 2$ or $q\neq 2$}\\
  & \ \ \text{or $\ell$ is not an integer},\\[3mm]
  \ell & \ \ \text{when $r=q=2$}\\
  & \ \ \text{and $\ell$ is an integer},\\
\end{array}\right.
\end{equation}
where $[\cdot]_-$ denotes the integer part of the argument. Suppose $U_0$ satisfies \eqref{lhs.cc}. Then, $e^{tL}$ satisfies the following time-decay property:
\begin{multline}
\label{thm.lhs.1}
\|\na^m e^{tL}U_0\|_{L^q}
\leq C(1+t)^{-\frac{3}{2}(\frac{1}{p}-\frac{1}{q})-\frac{m}{2}}\|U_0\|_{L^p}\\
+C(1+t)^{-\frac{\ell}{2}}
\|\na^{m+[\ell+3(\frac{1}{r}-\frac{1}{q})]_+} U_0\|_{L^r}
\end{multline}
for any $t\geq 0$, where $C=C(p,q,r,\ell,m)$.
\end{theorem}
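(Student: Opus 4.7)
\medskip

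\noindent\textbf{Proof plan for Theorem \ref{thm.lhs}.}

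The plan is to derive \eqref{thm.lhs.1} by applying the pointwise bound \eqref{cor.ptf.est} from the previous corollary on the Fourier side, after a low/high frequency decomposition of $\hat{U}(t,k)$. Writing $\hat U = \hat U_L + \hat U_H$ with $\hat U_L = \hat U\,\chi_{\{|k|\le 1\}}$ and $\hat U_H = \hat U\,\chi_{\{|k|\ge 1\}}$, the key observation is that the exponent in \eqref{cor.ptf.est} satisfies
\begin{equation*}
\frac{\la |k|^2}{(1+|k|^2)^2}\;\ge\;\frac{\la}{4}|k|^2\quad\text{if }|k|\le 1,
\qquad
\frac{\la |k|^2}{(1+|k|^2)^2}\;\ge\;\frac{\la}{4}|k|^{-2}\quad\text{if }|k|\ge 1,
\end{equation*}
so the low frequencies see a heat-type kernel while the high frequencies see the slow decay characteristic of regularity-loss systems. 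The first term on the right-hand side of \eqref{thm.lhs.1} comes from $\hat U_L$, and the second (regularity-costly) term from $\hat U_H$.

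For the low-frequency part, I would use $|\widehat{\na^m U_L}(t,k)|\le C|k|^m e^{-\la |k|^2 t/4}|\hat U_0(k)|\chi_{|k|\le 1}$ and handle the $L^q\to L^q$ mapping for $q\in\{2,\infty\}$ by Plancherel and by Hausdorff--Young respectively, combined in each case with Hölder's inequality against $\|\hat U_0\|_{L^{p'}}\le\|U_0\|_{L^p}$ (valid for $1\le p\le 2$). A short computation of the resulting Gaussian-type integral $\int_{|k|\le 1}|k|^{ms}e^{-c|k|^2 t}dk$ gives the factor $(1+t)^{-\frac{3}{2}(\frac1p-\frac1q)-\frac m2}$ for $q=2$ and $q=\infty$, after which the general $2\le q\le\infty$ case follows by Riesz--Thorin interpolation.

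For the high-frequency part, the central trick is the elementary inequality
\begin{equation*}
e^{-\la t/(4|k|^2)}\;\le\;C_{\al}\,|k|^{2\al}\,(1+t)^{-\al}\qquad\text{for }|k|\ge 1,\ \al\ge 0,
\end{equation*}
coming from $e^{-s}\le C_\al s^{-\al}$ with $s=\la t/(4|k|^2)$. Taking $\al=\ell/2$ turns a piece of the exponential decay into the algebraic factor $(1+t)^{-\ell/2}$ at the cost of a weight $|k|^{\ell}$. To close the estimate I would again use Hausdorff--Young on both ends ($\|\na^m U_H\|_{L^q}\le\|\widehat{\na^m U_H}\|_{L^{q'}}$ and $\|\hat U_0\|_{L^{r'}}\le\|U_0\|_{L^r}$ for $1\le r\le 2\le q\le\infty$), and Hölder with $1/q'=1/s+1/r'$, i.e.\ $1/s=1/r-1/q$. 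The resulting scalar factor
\begin{equation*}
\Bigl\||k|^{m+\ell-M}\chi_{\{|k|\ge 1\}}\Bigr\|_{L^s}
\end{equation*}
is finite exactly when $(M-m-\ell)s>3$, i.e.\ $M>m+\ell+3\bigl(\tfrac1r-\tfrac1q\bigr)$, so the smallest admissible integer order of derivative placed on $U_0$ is precisely $m+[\ell+3(\tfrac1r-\tfrac1q)]_-+1=m+[\ell+3(\tfrac1r-\tfrac1q)]_+$ in the generic case; the sharper choice in the endpoint $r=q=2$ with $\ell\in\Z$ is obtained by instead using Plancherel, where no strict inequality is needed and $M=m+\ell$ suffices.

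The main obstacle is bookkeeping rather than anything truly subtle: tracking the Hausdorff--Young/Hölder indices precisely and separating the endpoint case $r=q=2$, $\ell\in\Z$ from the generic case so that the case distinction in the definition \eqref{def.index} of $[\ell+3(\tfrac1r-\tfrac1q)]_+$ comes out correctly. Once the low- and high-frequency estimates are added, the regularity-loss structure of the Euler--Maxwell system is encoded entirely through the additional $[\ell+3(\tfrac1r-\tfrac1q)]_+$ derivatives required in the second term of \eqref{thm.lhs.1}, which will in turn dictate the minimal Sobolev index needed in Propositions \ref{prop.s.decay} and \ref{prop.s.Lq}.
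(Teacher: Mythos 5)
Your plan is correct and follows essentially the same route as the paper's proof: Hausdorff--Young plus a low/high frequency split, the two lower bounds on $|k|^2/(1+|k|^2)^2$ applied to the pointwise estimate \eqref{cor.ptf.est}, conversion of the high-frequency exponential $e^{-\la t/(4|k|^2)}$ into $(1+t)^{-\ell/2}$ at the cost of $|k|^\ell$, and a H\"older step with $1/s=1/r-1/q$ whose integrability threshold produces exactly $[\ell+3(\frac1r-\frac1q)]_+$, with Plancherel handling the endpoint $r=q=2$, $\ell\in\Z$. The only cosmetic difference is that you compute the low-frequency Gaussian factor directly and interpolate in $q$, where the paper simply cites Kawashima and Hoff--Zumbrun for the same standard estimate.
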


\begin{proof}
Take $2\leq q\leq \infty$ and an integer $m\geq 0$. Set $U(t)=e^{t L}U_0$. From Hausdorff-Young inequality,
\begin{eqnarray}\label{thm.lhs.p1}
    \|\na^m U(t) \|_{L^q(\R^3_x)}&\leq& C\left\||k|^m\hat{U}(t)\right\|_{L^{q'}(\R^3_k)}\\
    &\leq& C \left\||k|^m\hat{U}(t)\right\|_{L^{q'}(|k|\leq 1)}+C\left\||k|^m\hat{U}(t)\right\|_{L^{q'}(|k|\geq 1)},\nonumber
\end{eqnarray}
where $\frac{1}{q}+\frac{1}{q'}=1$. Notice that using the lower bounds
\begin{equation*}
   \frac{|k|^2 }{(1+|k|^2)^2}\geq  \frac{|k|^2}{2} \ \ \text{if}\ |k|\leq 1, \ \text{and}\ \  \frac{|k|^2 }{(1+|k|^2)^2}\geq  \frac{1}{4|k|^2} \ \ \text{if}\ |k|\geq 1,
\end{equation*}
it follows from \eqref{cor.ptf.est} that
\begin{equation*}
    |\hat{U}(t,k)|\leq\left\{
    \begin{array}{ll}
      Ce^{- \frac{\la}{2} |k|^2 t} |\hat{U_0}(k)| & \ \ \text{if}\ |k|\leq 1,\\[3mm]
      Ce^{- \frac{\la}{4 |k|^2 t}} |\hat{U_0}(k)| & \ \ \text{if}\ |k|\geq 1.
    \end{array}\right.
\end{equation*}
Thus, as in \cite{Ka} or \cite{HZ},
\begin{equation}\label{thm.lhs.p2}
   \left\||k|^m\hat{U}(t)\right\|_{L^{q'}(|k|\leq 1)}\leq   C(1+t)^{-\frac{3}{2}(\frac{1}{p}-\frac{1}{q})-\frac{m}{2}}\|U_0\|_{L^p}
\end{equation}
for any $1\leq p\leq 2$. On the other hand, letting $\ell\geq 0$, one has
\begin{equation*}
 \left\||k|^m\hat{U}(t)\right\|_{L^{q'}(|k|\geq 1)}\leq \sup_{|k|\geq 1} \left(\frac{1}{|k|^\ell}e^{-\frac{\la t}{4|k|^2}}\right) \left\||k|^{m+\ell}\hat{U}_0\right\|_{L^{q'}(|k|\geq 1)}
\end{equation*}
Since
\begin{equation*}
    \sup_{|k|\geq 1} \left(\frac{1}{|k|^\ell}e^{-\frac{\la t}{4|k|^2}}\right) \leq C(1+t)^{-\frac{\ell}{2}},
\end{equation*}
it follows that
\begin{equation}\label{thm.lhs.p3}
 \left\||k|^m\hat{U}(t)\right\|_{L^{q'}(|k|\geq 1)}\leq C(1+t)^{-\frac{\ell}{2}} \left\||k|^{m+\ell}\hat{U}_0\right\|_{L^{q'}(|k|\geq 1)}.
\end{equation}
Now, take $1\leq r\leq 2$ and fix $\eps>0$ small enough. By H\"{o}lder inequality $1/q'=1/r'+(r'-q')/(r'q')$ with $\frac{1}{r}+\frac{1}{r'}=1$,
\begin{multline}\label{thm.lhs.p4}
% \nonumber to remove numbering (before each equation)
 \left\||k|^{m+\ell}\hat{U}_0\right\|_{L^{q'}(|k|\geq 1)}= \left\||k|^{-\frac{r'-q'}{r'q'}(3+\eps)}|k|^{m+\ell+\frac{r'-q'}{r'q'}(3+\eps)}\hat{U}_0\right\|_{L^{q'}(|k|\geq 1)}\\
 \leq \left\||k|^{-(3+\eps)}\right\|_{L^1(|k|\geq 1)}^{\frac{r'-q'}{r'q'}}
 \left\||k|^{m+\ell+\frac{r'-q'}{r'q'}(3+\eps)}\hat{U}_0\right\|_{L^{r'}(|k|\geq 1)}\\
 \leq C
 \left\||k|^{m+\ell+(\frac{1}{r}-\frac{1}{q})(3+\eps)}\hat{U}_0\right\|_{L^{r'}(|k|\geq 1)}.
\end{multline}
When $r=q=2$ and $\ell$ is an integer,
\begin{equation*}
 \left\||k|^{m+\ell+(\frac{1}{r}-\frac{1}{q})(3+\eps)}\hat{U}_0\right\|_{L^{r'}(|k|\geq 1)}=   \left\||k|^{m+\ell}\hat{U}_0\right\|_{L^{2}(|k|\geq 1)}\leq C\|\na^{m+\ell}U_0\|,
\end{equation*}
which after plugging into \eqref{thm.lhs.p4} and then \eqref{thm.lhs.p3}, together with \eqref{thm.lhs.p2} and \eqref{thm.lhs.p1}, implies \eqref{thm.lhs.1}. When $r\neq 2$ or $q\neq 2$ or $\ell$ is not an integer, by letting $\eps>0$ small enough, it follows from Hausdorff-Young inequality that
\begin{multline*}
\left\||k|^{m+\ell+(\frac{1}{r}-\frac{1}{q})(3+\eps)}\hat{U}_0\right\|_{L^{r'}(|k|\geq 1)}
\leq \left\||k|^{m+[\ell+3(\frac{1}{r}-\frac{1}{q})]_-+1}\hat{U}_0\right\|_{L^{r'}(|k|\geq 1)}\\
\leq C\|\na^{m+[\ell+3(\frac{1}{r}-\frac{1}{q})]_+} U_0\|_{L^r},
\end{multline*}
which, similarly after plugging into \eqref{thm.lhs.p4} and then \eqref{thm.lhs.p3}, together with \eqref{thm.lhs.p2}, implies \eqref{thm.lhs.1}. This completes the proof of Theorem \ref{thm.lhs}.
\end{proof}

%\newpage

\subsection{Representation of solutions}\label{sec.lhs.3}

In this subsection, we furthermore explore the explicit solution $U=[\rho,u,E,B]=e^{tL}U_0$ to the Cauchy problem \eqref{lhs}-\eqref{lhs.id} with the condition \eqref{s.cr.cc} or equivalently the system \eqref{lhs.f} in the time-frequency variables. The main goal is to prove Theorem \ref{thm.green} stated at the end of this subsection.

Taking the time derivative for the first equation of \eqref{lhs} and using the second equation of \eqref{lhs} to replace $\pa_tu$, it follows that
\begin{equation*}
    \pa_{tt}\rho-\ga \De\rho-\na\cdot E-\na\cdot u=0.
\end{equation*}
Further noticing $\na\cdot E=-\rho$ and $\na\cdot u=-\pa_t\rho$, one has
\begin{equation}\label{a.rho.p1}
    \pa_{tt}\rho-\ga\De\rho+\rho+\pa_t\rho=0.
\end{equation}
Initial data is given by
\begin{equation}\label{a.rho.p2}
 \rho|_{t=0}=\rho_0=-\na\cdot E_0,\ \ \pa_t\rho|_{t=0}=-\na\cdot u_0.
\end{equation}
By solving the Fourier transform of the second order ODE \eqref{a.rho.p1}-\eqref{a.rho.p2} as
\begin{equation*}
 \left\{\begin{array}{l}
    \dis \pa_{tt}\hat{\rho}+(1+\ga |k|^2)\hat{\rho}+\pa_t\hat{\rho}=0,\\[2mm]
     \dis \hat{\rho}|_{t=0}=\hat{\rho}_0=-ik\cdot \hat{E}_0,\\[2mm]
     \dis \pa_t\hat{\rho}|_{t=0}=-ik\cdot \hat{u}_0,
 \end{array}\right.
\end{equation*}
it is easy to obtain
\begin{eqnarray}
% \nonumber to remove numbering (before each equation)
  \hat{\rho}(t,k) &=& \hat{\rho}_0e^{-\frac{t}{2}}\cos(\sqrt{3/4+\ga|k|^2} t) \label{a.rho.p3}\\
  &&+(\frac{1}{2}\hat{\rho}_0-ik\hat{u}_0)e^{-\frac{t}{2}}\frac{\sin(\sqrt{3/4+\ga|k|^2} t)}{\sqrt{3/4+\ga|k|^2}}.\nonumber
\end{eqnarray}
Again using $\na\cdot E=-\rho$, \eqref{a.rho.p3} implies
\begin{eqnarray*}
% \nonumber to remove numbering (before each equation)
  \tilde{k}\cdot\hat{E}(t,k) &=& \tilde{k}\cdot \hat{E}_0e^{-\frac{t}{2}}\cos(\sqrt{3/4+\ga|k|^2} t) \\
  &&+\tilde{k}\cdot (\frac{1}{2}\hat{E}_0+\hat{u}_0)e^{-\frac{t}{2}}\frac{\sin(\sqrt{3/4+\ga|k|^2} t)}{\sqrt{3/4+\ga|k|^2}}.\nonumber
\end{eqnarray*}
Here and in the sequel we set $\tilde{k}=k/|k|$ for $|k|\neq 0$. Similarly, taking the time derivative for the second equation of \eqref{lhs} and then replacing $\pa_t \rho$, $\pa_t E$ by the first and third equations of \eqref{lhs}, it follows that
\begin{equation*}
    \pa_{tt}u-\ga \na \na\cdot u+\na\times B+u+\pa_tu=0.
\end{equation*}
Further taking the divergence, one has
\begin{equation}\label{a.u.div.p1}
    \pa_{tt}(\na\cdot u)-\ga \De \na\cdot u+\na\cdot u+\pa_t\na\cdot u=0.
\end{equation}
Notice
\begin{eqnarray}
% \nonumber to remove numbering (before each equation)
 \na\cdot u|_{t=0}&=&\na\cdot u_0,\label{a.u.div.p2}\\
\pa_t\na\cdot u|_{t=0}&=&-\ga\De\rho_0-\na\cdot E_0-\na\cdot u_0=-\ga\De\rho_0+\rho_0-\na\cdot u_0.\label{a.u.div.p3}
\end{eqnarray}
Similarly, by solving the Fourier transform of the second ODE \eqref{a.u.div.p1} with  \eqref{a.u.div.p2}-\eqref{a.u.div.p3} as
\begin{equation*}
 \left\{\begin{array}{l}
    \dis \pa_{tt}(\tilde{k}\cdot \hat{u})+(1+\ga |k|^2)(\tilde{k}\cdot \hat{u})+\pa_t(\tilde{k}\cdot \hat{u})=0,\\[2mm]
     \dis (\tilde{k}\cdot \hat{u})|_{t=0}=\tilde{k}\cdot \hat{u}_0,\\[2mm]
     \dis \pa_t(\tilde{k}\cdot \hat{u})|_{t=0}=\tilde{k}\cdot (-i\ga k\hat{\rho}_0-\hat{E}_0-\hat{u}_0),
 \end{array}\right.
\end{equation*}
one has
\begin{eqnarray*}
% \nonumber to remove numbering (before each equation)
  \tilde{k}\cdot \hat{u}(t,k) &=& \tilde{k}\cdot \hat{u}_0e^{-\frac{t}{2}}\cos(\sqrt{3/4+\ga|k|^2} t) \\
  &&+\tilde{k}\cdot (-\frac{1}{2}\hat{u}_0-i\ga k \hat{\rho}_0-\hat{E}_0)e^{-\frac{t}{2}}\frac{\sin(\sqrt{3/4+\ga|k|^2} t)}{\sqrt{3/4+\ga|k|^2}}.
\end{eqnarray*}

Next, we shall solve
\begin{equation*}
\left\{\begin{array}{l}
   M_1(t,k):= -\tilde{k}\times (\tilde{k}\times \hat{u}(t,k)),\\
    M_2(t,k):=-\tilde{k}\times (\tilde{k}\times \hat{E}(t,k)),\\
    M_3(t,k):=-\tilde{k}\times (\tilde{k}\times \hat{B}(t,k)),
\end{array}\right.
\end{equation*}
for $t>0$ and $|k|\neq 0$. Taking the curl for the equations of $\pa_tu,\pa_t E,\pa_tB$ in \eqref{lhs}, it follows that
\begin{equation*}
    \left\{\begin{array}{l}
      \pa_t (\na\times u)+\na\times E+\na\times u=0,\\
      \pa_t (\na\times E)-\na\times (\na\times B)-\na\times u=0,\\
      \pa_t (\na\times B)+\na\times (\na\times E)=0.
    \end{array}\right.
\end{equation*}
In terms of the Fourier transform in $x$,  one has
\begin{equation}\label{a.SM}
    \left\{\begin{array}{rrrr}
      \pa_tM_1 =& -M_1 & -M_2 &\\
      \pa_tM_2 =&M_1 &&+ik\times M_3\\
      \pa_tM_3 =& &-ik\times M_2& \\
    \end{array}\right.
\end{equation}
with initial data
\begin{equation}\label{a.SM.id}
    [M_1,M_2,M_3]|_{t=0}=[M_{1,0},M_{2,0},M_{3,0}].
\end{equation}
Here, we have defined
\begin{equation*}
    M_{1,0}= -\tilde{k}\times(\tilde{k}\times \hat{u}_0),\ \
    M_{2,0}= -\tilde{k}\times(\tilde{k}\times \hat{E}_0),\ \
    M_{3,0}= -\tilde{k}\times(\tilde{k}\times \hat{B}_0).
\end{equation*}
Taking the time derivative for the second equation of \eqref{a.SM} and then using the other two equations to replace $\pa_tM_1$ and $\pa_tM_3$ gives
\begin{equation*}
    \pa_{tt}M_2=-M_1-M_2+k\times (k\times M_2),
\end{equation*}
which from $k\times (k\times M_2)=-|k|^2M_2$ due to $k\cdot M_2=0$, implies
\begin{equation}\label{a.SM.p02}
    \pa_{tt}M_2+(1+|k|^2)M_2=-M_1.
\end{equation}
Further taking the time derivative for \eqref{a.SM.p02} and replacing $\pa_tM_1$ by the first equation of \eqref{a.SM}, one has
\begin{equation}\label{a.SM.p03}
    \pa_{ttt}M_2+(1+|k|^2)\pa_tM_2=-\pa_tM_1=M_1+M_2.
\end{equation}
The sum of \eqref{a.SM.p02} and \eqref{a.SM.p03} yields the following three order ODE for $M_2$:
\begin{equation}\label{a.SM.p04}
    \pa_{ttt}M_2+\pa_{tt}M_2+(1+|k|^2)\pa_tM_2+|k|^2M_2=0.
\end{equation}
Initial data is given as
\begin{equation}\label{a.SM.p04.id}
    \left\{\begin{array}{rl}
      M_2|_{t=0}=&M_{2,0},\\[2mm]
      \pa_tM_2|_{t=0}=&M_{1,0}+ik\times M_{3,0},\\[2mm]
      \pa_{tt}M_2|_{t=0}=&-M_{1,0}-(1+|k|^2)M_{2,0}.
    \end{array}\right.
\end{equation}
The characteristic equation of \eqref{a.SM.p04} reads
\begin{equation*}
    F(\chi):=\chi^3+\chi^2+(1+|k|^2)\chi+|k|^2=0.
\end{equation*}
For the roots of the above characteristic equation and their basic properties, one has

\begin{lemma}\label{a.lem.root}
Let $|k|\neq 0$. The equation $F(\chi)=0$, $\chi\in \C$, has a real root $\si=\si(|k|)\in (-1,0)$ and two conjugate complex roots $\chi_{\pm}=\be\pm i\om$ with $\be=\be(|k|)\in (-1/2,0)$ and $\om=\om(|k|)\in(\sqrt{6}/3,\infty)$ satisfying
\begin{equation}\label{a.lem.root.1}
    \be=-\frac{\si+1}{2},\ \ \om=\frac{1}{2} \sqrt{3\si^2+2\si+3+4|k|^2}.
\end{equation}
$\si,\be,\om$ are smooth over $|k|>0$, and $\si(|k|)$ is strictly decreasing in $|k|>0$ with
\begin{equation*}
    \lim\limits_{|k|\to 0}\si(|k|)=0,\ \ \ \lim\limits_{|k|\to \infty}\si(|k|)=-1.
\end{equation*}
Mover, the following asymptotic behaviors hold true:
\begin{eqnarray*}
% \nonumber to remove numbering (before each equation)
&\dis \si(|k|)=-O(1)|k|^2,\ \ \be(|k|)=-\frac{1}{2}+O(1)|k|^2,\ \ \om(|k|)=\frac{\sqrt{3}}{2}+O(1)|k|
\end{eqnarray*}
whenever $|k|\leq 1$ is small, and
\begin{eqnarray*}
% \nonumber to remove numbering (before each equation)
&\dis \si(|k|)=-1+O(1)|k|^{-2},\ \ \be(|k|)=-O(1)|k|^{-2},\ \ \om(|k|)=O(1)|k|
\end{eqnarray*}
whenever $|k|\geq 1$ is large. Here and in the sequel $O(1)$ denotes a generic strictly positive constant.
\end{lemma}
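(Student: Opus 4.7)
The cornerstone is that $F$ is strictly monotone on $\R$. Compute $F'(\chi) = 3\chi^2 + 2\chi + (1+|k|^2)$; its discriminant $4 - 12(1+|k|^2)$ is negative, so $F' > 0$ everywhere and $F$ is strictly increasing. Thus $F$ has exactly one real root $\si$, and the remaining two roots must form a conjugate complex pair $\chi_\pm = \be \pm i\om$ with $\om > 0$. Since $F(-1) = -1 < 0$ and $F(0) = |k|^2 > 0$, the intermediate value theorem places $\si \in (-1,0)$.

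The identities \eqref{a.lem.root.1} follow from equating
\[
F(\chi) = (\chi - \si)\bigl(\chi^2 - 2\be\chi + (\be^2 + \om^2)\bigr)
\]
with $\chi^3 + \chi^2 + (1+|k|^2)\chi + |k|^2$. Matching the $\chi^2$-coefficient gives $\si + 2\be = -1$, hence $\be = -(1+\si)/2$; matching the $\chi$-coefficient and substituting the formula for $\be$ gives $\om^2 = (3\si^2 + 2\si + 3 + 4|k|^2)/4$. The range $\be \in (-1/2, 0)$ is immediate from $\si \in (-1,0)$. For the lower bound on $\om$, minimize $3\si^2 + 2\si + 3$ on $[-1,0]$: the minimum is attained at $\si = -1/3$ with value $8/3$, and with the extra $|k|^2 > 0$ one gets $\om^2 > 2/3$, i.e.\ $\om > \sqrt{6}/3$.

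Smoothness of $\si$, and therefore of $\be$ and $\om$ via \eqref{a.lem.root.1}, follows from the implicit function theorem applied to $F(\si, |k|) = 0$, since $\pa_\chi F(\si, |k|) = F'(\si) > 0$. Implicit differentiation in $|k|$ yields
\[
\si'(|k|) = -\frac{2|k|(\si+1)}{F'(\si)} < 0,
\]
so $\si$ is strictly decreasing on $|k| > 0$. Monotonicity and boundedness then force $\si(|k|)$ to have limits at $0^+$ and $\infty$. The limit $\si_0 = \lim_{|k| \to 0} \si$ must satisfy $F(\si_0, 0) = \si_0(\si_0^2 + \si_0 + 1) = 0$, whence $\si_0 = 0$; for the limit $\si_\infty$ at infinity, writing $F(\si, |k|) = \si^3 + \si^2 + \si + |k|^2(\si + 1)$, the vanishing of $F(\si(|k|), |k|)$ as $|k| \to \infty$ together with boundedness of $\si$ forces $\si_\infty + 1 = 0$, i.e.\ $\si_\infty = -1$.

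The asymptotics come by substituting ansatzes into $F(\si) = 0$. For $|k| \ll 1$, set $\si = -c|k|^2 + o(|k|^2)$; the leading balance is $\si + |k|^2 = 0$, giving $c \to 1$, so $\si = -O(1)|k|^2$; then \eqref{a.lem.root.1} gives $\be = -1/2 + O(|k|^2)$ and $\om = \sqrt{3}/2 + O(|k|)$. For $|k| \gg 1$, write $\si = -1 + \eta$ with $\eta$ small; expanding,
\[
F(-1+\eta) = -1 + \eta(|k|^2 + 2) + O(\eta^2),
\]
so the leading balance $\eta(|k|^2 + 2) \sim 1$ gives $\eta = O(|k|^{-2})$, whence $\be = -O(|k|^{-2})$ and $\om = O(|k|)$ via \eqref{a.lem.root.1}. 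The only slightly delicate step is the algebraic derivation of the explicit formula for $\om^2$ and the verification of its strict lower bound; everything else is a routine combination of IVT, IFT, Vieta, and asymptotic matching.
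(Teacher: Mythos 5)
Your proof is correct and follows essentially the same route as the paper: strict monotonicity of $F$ on $\R$ plus the sign change on $(-1,0)$ for the real root, the implicit function theorem and implicit differentiation for smoothness and monotonicity of $\si$, and the factorization of $F$ to extract $\be$ and $\om$ (you use Vieta's relations where the paper completes the square in $\chi-\si$, which is the same algebra). Your derivation of the limits and asymptotics by leading-order balance is an equivalent rephrasing of the paper's rearrangements of $F(\si)=0$, and you additionally verify the lower bound $\om>\sqrt{6}/3$ explicitly, which the paper leaves implicit.
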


\begin{proof}
Suppose $|k|\neq 0$. Let us first find the possibly existing real root for equation $F(\chi)=0$ over $\chi\in\R$. Notice that
\begin{equation*}
    F'(\chi)=3\chi^2+2\chi +(1+|k|^2)=3(\chi+\frac{1}{3})^2+(\frac{2}{3}+|k|^2)>0
\end{equation*}
and $F(0)=|k|^2>0$, $F(-1)=-1<0$, then equation $F(\chi)=0$ indeed has one and only one real root denoted by $\si=\si(|k|)$ satisfying $-1<\si<0$. Since $F(\cdot)$ is smooth, then $\si(\cdot)$ is also smooth in $|k|>0$. By taking derivative of $F(\si(|k|))=0$ in $|k|$, one has
\begin{equation*}
    \si'(|k|)=\frac{-2|k|[1+\si(k)]}{ 3 [\si(k)]^2+2 \si(|k|)+|k|^2+1}<0,
\end{equation*}
so that $\si(\cdot)$ is strictly decreasing in $|k|>0$. Since $F(\si)=0$ can be re-written as
\begin{equation*}
    \si\left[\frac{\si (1+\si)}{1+|k|^2}+1\right]=-\frac{|k|^2}{1+|k|^2},
\end{equation*}
then $\si$ has limits $0$ and $-1$ as $|k|\to 0$ and $|k|\to \infty$, respectively, and moreover
$\si=-O(1)|k|^2$ whenever $|k|\leq 1$ is small.  $F(\si)=0$ is also equivalent with
\begin{equation*}
    \si+1=\frac{1}{\si^2+1+|k|^2}.
\end{equation*}
Therefore, it follows that $\si=-1+O(1)|k|^{-2}$ whenever $|k|\geq 1$ is large.

Next, let us find roots of $F(\chi)=0$ over $\chi\in \C$. Since $F(\si)=0$ with $\si\in \R$, $F(\chi)=0$ can be factored as
\begin{equation*}
    F(\chi)=(\chi-\si)[(\chi-\si)^2+(3\si+1)(\chi-\si)+3\si^2+2\si+|k|^2+1]=0.
\end{equation*}
Then, two conjugate complex roots $\chi_{\pm}=\be\pm i\om$ turn out to exist and satisfy
\begin{equation*}
  (\chi-\si)^2+(3\si+1)(\chi-\si)+3\si^2+2\si+|k|^2+1=0.
\end{equation*}
It follows that $\be=\be(|k|),\om=\om(|k|)$ take the form of \eqref{a.lem.root.1} by solving the above equation. Notice that the asymptotic behavior of $\om(|k|)$, $\be(|k|)$ at $|k|=0$ and $\infty$ directly results from that of $\si(|k|)$. This completes the proof of Lemma \ref{a.lem.root}.
\end{proof}

From Lemma \ref{a.lem.root}, one can set the solution of \eqref{a.SM.p04} as
\begin{equation}\label{def.M2}
    M_2(t,k)=c_1(k)e^{\si t}+e^{\be t}[c_2(k)\cos \om t+c_3(k)\sin \om t],
\end{equation}
where $c_i(k),1\leq i\leq 3$, are to be determined by \eqref{a.SM.p04.id} later. In fact, \eqref{def.M2} implies
\begin{equation}\label{a.SM.p06}
    \left[\begin{array}{r}
       M_2|_{t=0}\\ \pa_t M_2|_{t=0} \\ \pa_{tt}M_2|_{t=0}
    \end{array}\right]
    =A \left[\begin{array}{c}
       c_1\\ c_2 \\ c_3
    \end{array}\right],\ \ A:=
     \left[\begin{array}{rrr}
       \FI_3  &\ \  \FI_3   &\ \ \FO_3\\
       \si \FI_3 &\ \ \be \FI_3 &\ \ \om \FI_3\\
       \si^2 \FI_3 &\ \ (\be^2-\om^2)\FI_3 &\ \ 2\be\om\FI_3
    \end{array}\right].
\end{equation}
It is straightforward to check that
\begin{equation*}
    {\rm det}A=\om [\om^2+(\si-\be)^2]=\om(3\si^2+2\si+1+|k|^2)>0
\end{equation*}
and
\begin{equation*}
A^{-1}=\frac{1}{{\rm det}A}
     \left[\begin{array}{rrr}
      (\be^2+\om^2)\om \FI_3  &\ \   -2\be\om\FI_3   &\ \ \om\FI_3\\
        \si(\si-2\be)\om\FI_3 &\ \  2\be\om\FI_3 &\ \ -\om \FI_3\\
       \si(\be^2+\om^2-\si\be) \FI_3 &\ \  (\om^2+\si^2-\be^2)\FI_3 &\ \ (\be-\si)\FI_3
    \end{array}\right].
\end{equation*}
Notice that \eqref{a.SM.p06} together with \eqref{a.SM.p04.id} gives
\begin{equation*}
 \left[\begin{array}{c}
       c_1\\ c_2 \\ c_3
    \end{array}\right]=A^{-1}
     \left[\begin{array}{rrr}
       \FO_3  &\ \  \FI_3   &\ \ \FO_3\\
        \FI_3 &\ \ \FO_3 &\ \ ik\times\\
      -\FI_3 &\ \ -(1+|k|^2)\FI_3 &\ \ \FO_3
    \end{array}\right]
    \left[\begin{array}{c}
       M_{1,0}\\ M_{2,0} \\ M_{3,0}
    \end{array}\right],
\end{equation*}
which after plugging $A^{-1}$, implies
\begin{align*}
& [c_1,c_2,c_3]^{T}= \frac{1}{{\rm det}A}\\[2mm]
   & \left[\begin{array}{rrr}
       -(2\be+1)\om\FI_3  &   (\be^2+\om^2-|k|^2-1)\om\FI_3   &\ -2\be \om ik\times\\[3mm]
        (2\be+1)\om\FI_3 &  (\si^2-2\si\be+|k|^2+1)\om\FI_3 &  2\be \om ik\times\\[3mm]
        \begin{array}{r}
          (\si^2+\om^2-\be^2\ \,\\
           -\be+\si)\FI_3
        \end{array}
       &  \begin{array}{r}
       [\si(\be^2-\om^2-\si\be)\ \ \ \ \ \\
        -(\be-\si)(1+|k|^2)]\FI_3
        \end{array}
      &  \begin{array}{r}
             (\si^2+\om^2\ \ \ \ \\
              -\be^2)ik\times
         \end{array}
    \end{array}\right]
    \left[\begin{array}{c}
       M_{1,0}\\[4mm] M_{2,0} \\[8mm] M_{3,0}
    \end{array}\right].
\end{align*}
Here $[\cdot]^{T}$ denotes the transpose of a vector. Using the form of $\be$ and $\om$ to make further simplifications, one has
\begin{align}\label{def.C123}
&  [c_1,c_2,c_3]^{T}=\frac{1}{3\si^2+2\si+1+|k|^2}\\
    &\,
     \left[\begin{array}{rrr}
       \si \FI_3 &   \si(\si+1)\FI_3   & (\si+1)ik\times\\
        - \si\FI_3 &  (2\si^2+\si+|k|^2+1)\FI_3 & -(\si+1)ik\times\\
      \frac{\frac{3}{2}\si^2+\frac{3}{2}\si+1+|k|^2}{\om}\FI_3 & \frac{(\si+1)(\si+1+|k|^2)}{2\om}\FI_3 &\ \ \frac{\frac{3}{2}\si^2+\frac{1}{2}+|k|^2}{\om}ik\times
    \end{array}\right]
    \left[\begin{array}{c}
       M_{1,0}\\ M_{2,0} \\ M_{3,0}
    \end{array}\right].\nonumber
\end{align}
Now, in order to get $M_1(t,k)$ and $M_3(t,k)$ from $M_2(t,k)$, it follows from the first and third equations of \eqref{a.SM} that
\begin{eqnarray*}
% \nonumber to remove numbering (before each equation)
  M_1(t,k) &=& M_{1,0}(k)e^{-t}-\int_0^{t}e^{-(t-s)}M_2(s,k)ds,\\
  M_3(t,k) &=& M_{3,0}(k)-ik\times \int_0^tM_2(s,k)ds.
\end{eqnarray*}
Putting \eqref{def.M2} into the above equations and taking integrations in time gives
\begin{eqnarray*}
% \nonumber to remove numbering (before each equation)
  M_1(t,k) &=& [M_{1,0}(k)+c_4(k)]e^{-t}-\frac{c_1(k)}{1+\si}e^{\si t}\\
  &&-\frac{c_2(k)}{(1+\be)^2+\om^2}e^{\be t}\left[(1+\be)\cos \om t +\om\sin \om  t\right]\\
    &&-\frac{c_3(k)}{(1+\be)^2+\om^2}e^{\be t}\left[(1+\be)\sin \om t -\om\cos \om  t\right]
\end{eqnarray*}
and
\begin{eqnarray*}
% \nonumber to remove numbering (before each equation)
  M_3(t,k) &=& [M_{3,0}(k)+ik\times c_5(k)]-ik\times \frac{c_1(k)}{\si}e^{\si t}\\
  &&-ik\times\frac{c_2(k)}{\be^2+\om^2}e^{\be t}\left[\be\cos \om t +\om\sin \om  t\right]\\
    &&-ik\times \frac{c_3(k)}{\be^2+\om^2}e^{\be t}\left[\be\sin \om t -\om\cos \om  t\right]
\end{eqnarray*}
where $c_4(k)$, $c_5(k)$ are chosen such that $[M_1,M_3]|_{t=0}=[M_{1,0},M_{3,0}]$ by \eqref{a.SM.id}  and hence
\begin{eqnarray*}
% \nonumber to remove numbering (before each equation)
  c_4(k) &=& \frac{1}{(1+\si)[(1+\be)^2+\om^2]}\\
  &&[(1+\be)^2+\om^2, (1+\be)(1+\si),-\om(1+\si)][c_1,c_2,c_3]^{T}, \\
  c_5(k) &=&\frac{1}{\si(\be^2+\om^2)}[\be^2+\om^2, \si\be,-\si\om][c_1,c_2,c_3]^{T}.
\end{eqnarray*}
Notice that after tenuous computations, one can check that
\begin{eqnarray*}
% \nonumber to remove numbering (before each equation)
&&M_{1,0}(k)+c_4(k)=0,\\
 && M_{3,0}(k)+ik\times c_5(k)= 0,
\end{eqnarray*}
for all $|k|\neq 0$. Then,
\begin{eqnarray}\label{def.M1}
% \nonumber to remove numbering (before each equation)
  M_1(t,k) &=& -\frac{c_1(k)}{1+\si}e^{\si t}\\
  &&-\frac{c_2(k)}{(1+\be)^2+\om^2}e^{\be t}\left[(1+\be)\cos \om t +\om\sin \om  t\right]\nonumber\\
    &&-\frac{c_3(k)}{(1+\be)^2+\om^2}e^{\be t}\left[(1+\be)\sin \om t -\om\cos \om  t\right]\nonumber
\end{eqnarray}
and
\begin{eqnarray}\label{def.M3}
% \nonumber to remove numbering (before each equation)
  M_3(t,k) &=& -ik\times \frac{c_1(k)}{\si}e^{\si t}\\
  &&-ik\times\frac{c_2(k)}{\be^2+\om^2}e^{\be t}\left[\be\cos \om t +\om\sin \om  t\right]\nonumber\\
    &&-ik\times \frac{c_3(k)}{\be^2+\om^2}e^{\be t}\left[\be\sin \om t -\om\cos \om  t\right].\nonumber
\end{eqnarray}

Now, let us summarize the above computations on the explicit representation of Fourier transforms of the solution $U=[\rho, u, E,B]$.

\begin{theorem}\label{thm.green}
Let $U=[\rho, u, E,B]$ be the solution to the Cauchy problem \eqref{lhs}-\eqref{lhs.id} on the linearized homogeneous system with initial data $U_0=[\rho_0,u_0,E_0,B_0]$ satisfying  \eqref{lhs.cc}. For $t\geq 0$ and $k\in \R^3$ with $|k|\neq 0$, one has the decomposition
\begin{equation}\label{thm.green.1}
    \left[\begin{array}{c}
    \hat{\rho}(t,k)\\
    \hat{u}(t,k)\\
    \hat{E}(t,k)\\
    \hat{B}(t,k)
    \end{array}\right]=
     \left[\begin{array}{c}
    \hat{\rho}(t,k)\\
    \hat{u}_{\parallel}(t,k)\\
    \hat{E}_{\parallel}(t,k)\\
    0
    \end{array}\right]+
     \left[\begin{array}{c}
    0\\
    \hat{u}_\perp(t,k)\\
    \hat{E}_\perp(t,k)\\
    \hat{B}_\perp(t,k)
    \end{array}\right],
\end{equation}
where $ \hat{u}_{\parallel}, \hat{u}_\perp$ are defined by
\begin{equation*}
   \hat{u}_{\parallel}=\tilde{k}\tilde{k}\cdot\hat{u},\ \ \hat{u}_\perp=-\tilde{k}\times (\tilde{k}\times \hat{u})=(\FI_3-\tilde{k}\otimes \tilde{k})\hat{u},
\end{equation*}
and likewise for $\hat{E}_{\parallel},\hat{E}_\perp$ and $\hat{B}_\perp$. Denote
\begin{equation}\label{thm.green.2}
  \left[\begin{array}{c}
    M_1(t,k)\\
     M_2(t,k)\\
     M_3(t,k)
    \end{array}\right]:=\left[\begin{array}{c}
    \hat{u}_\perp(t,k)\\
    \hat{E}_\perp(t,k)\\
    \hat{B}_\perp(t,k)
    \end{array}\right],\ \
    \left[\begin{array}{c}
    M_{1,0}(k)\\
     M_{2,0}(k)\\
    M_{3,0}(k)
    \end{array}\right]:=\left[\begin{array}{c}
    \hat{u}_{0,\perp}(k)\\
    \hat{E}_{0,\perp}(k)\\
    \hat{B}_{0,\perp}(k)
    \end{array}\right]
\end{equation}
Then, there are matrices $G^{I}_{7\times 7}(t,k)$ and $G^{II}_{9\times 9}(t,k)$ such that
\begin{equation}\label{thm.green.3}
\left[\begin{array}{c}
    \hat{\rho}(t,k)\\
    \hat{u}_{\parallel}(t,k)\\
    \hat{E}_{\parallel}(t,k)
    \end{array}\right]= G^{I}_{7\times 7}(t,k)\left[\begin{array}{c}
    \hat{\rho}_0(k)\\
    \hat{u}_{0,\parallel}(k)\\
    \hat{E}_{0,\parallel}(k)
    \end{array}\right]
\end{equation}
and
\begin{equation*}
 \left[\begin{array}{c}
    M_1(t,k)\\
     M_2(t,k)\\
     M_3(t,k)
    \end{array}\right]= G^{II}_{9\times 9}(t,k)\left[\begin{array}{c}
    M_{1,0}(k)\\
     M_{2,0}(k)\\
    M_{3,0}(k)
    \end{array}\right],
\end{equation*}
where $G^{I}_{7\times 7}(t,k)$ is given by
\begin{eqnarray}
% \nonumber to remove numbering (before each equation)
  G^{I}_{7\times 7} &=& e^{-\frac{t}{2}}\cos(\sqrt{3/4+\ga|k|^2} t) \FI_3\label{thm.green.5}\\
  &&+e^{-\frac{t}{2}}\frac{\sin(\sqrt{3/4+\ga|k|^2} t)}{\sqrt{3/4+\ga|k|^2}}
  \left[\begin{array}{ccc}
     1/2 & -i k         & 0 \\
     -i \ga k    & -1/2 & -1  \\
     0           &    1         & 1/2
  \end{array}\right],\nonumber
\end{eqnarray}
and $G^{II}_{9\times 9}(t,k)$ is explicitly determined by representations \eqref{def.M1}, \eqref{def.M2}, \eqref{def.M3} for $M_1(t,k)$, $M_2(t,k)$, $M_3(t,k)$ with $c_1(k)$, $c_2(k)$ and $c_3(k)$ defined by \eqref{def.C123} in terms of $M_{1,0}(k)$, $M_{2,0}(k)$, $M_{3,0}(k)$.
\end{theorem}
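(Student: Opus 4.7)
The plan is to split the Fourier-transformed system \eqref{lhs.f} into a parallel component (along $\tilde{k}$) and a transverse component (orthogonal to $\tilde{k}$), solve each separately, and then collect the computations. Since $k\cdot\hat B=0$, only the parallel parts of $\hat u,\hat E$ and the scalar $\hat\rho$ appear in the first block, while $\hat B$ lives entirely in the transverse block together with $\hat u_\perp,\hat E_\perp$. A direct inspection of \eqref{lhs.f} shows that the gradient/divergence terms produce only longitudinal contributions and the curl terms produce only transverse contributions, so the two blocks decouple.

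For the parallel block, I would first differentiate the continuity equation in $t$, use the second equation to eliminate $\partial_t u$, and invoke the constraint $\nabla\cdot E=-\rho$ and $\nabla\cdot u=-\partial_t\rho$ to reach the damped Klein--Gordon equation $\partial_{tt}\hat\rho+\partial_t\hat\rho+(1+\gamma|k|^2)\hat\rho=0$ with data $\hat\rho_0=-ik\cdot\hat E_0$ and $\partial_t\hat\rho|_{t=0}=-ik\cdot\hat u_0$. Solving this ODE explicitly gives \eqref{a.rho.p3}, and then $\tilde k\cdot\hat E$ follows from $ik\cdot\hat E=-\hat\rho$. An analogous derivation produces the ODE for $\tilde k\cdot\hat u$ with initial velocity $\tilde k\cdot(-i\gamma k\hat\rho_0-\hat E_0-\hat u_0)$, yielding its closed form. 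Gathering these three formulas and reading off the coefficients of $\hat\rho_0,\hat u_{0,\parallel},\hat E_{0,\parallel}$ assembles the $7\times 7$ matrix \eqref{thm.green.5}.

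For the transverse block, I take curls of the equations for $\partial_t u,\partial_t E,\partial_t B$ and rewrite them in Fourier variables with the substitutions $M_1=\hat u_\perp$, $M_2=\hat E_\perp$, $M_3=\hat B_\perp$ to obtain the $3\times3$ first-order system \eqref{a.SM}. Differentiating the $M_2$-equation twice and using $k\cdot M_2=0$ (so that $k\times(k\times M_2)=-|k|^2 M_2$) eliminates $M_1,M_3$ and yields the third-order ODE \eqref{a.SM.p04} with the initial data \eqref{a.SM.p04.id}. Lemma \ref{a.lem.root} gives the three roots $\sigma,\beta\pm i\omega$ of the characteristic polynomial, so $M_2$ has the shape \eqref{def.M2}. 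The coefficients $c_1,c_2,c_3$ are then identified by inverting the $3\times 3$ matrix $A$ in \eqref{a.SM.p06}; after simplification using the relations $\beta=-(\sigma+1)/2$ and $4\omega^2=3\sigma^2+2\sigma+3+4|k|^2$ of \eqref{a.lem.root.1}, we obtain \eqref{def.C123}. Finally, $M_1$ and $M_3$ are recovered from the first and third equations of \eqref{a.SM} by the variation-of-constants formulas, and substituting \eqref{def.M2} and integrating produces \eqref{def.M1} and \eqref{def.M3}.

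The one delicate point I anticipate is the verification that the two constants of integration $c_4(k)$ and $c_5(k)$ which enforce $M_1|_{t=0}=M_{1,0}$ and $M_3|_{t=0}=M_{3,0}$ in fact satisfy $M_{1,0}+c_4=0$ and $M_{3,0}+ik\times c_5=0$, so that no residual $e^{-t}$ or constant terms remain in \eqref{def.M1}, \eqref{def.M3}. This is the step the text labels ``tenuous computations''; concretely it requires plugging $c_1,c_2,c_3$ from \eqref{def.C123} into the expressions for $c_4,c_5$ and using $F(\sigma)=0$, together with $\beta=-(\sigma+1)/2$, to produce systematic cancellations. Once this algebraic consistency is checked, collecting all transverse components into a $9\times 9$ block matrix acting on $[M_{1,0},M_{2,0},M_{3,0}]^T$ yields $G^{II}_{9\times 9}(t,k)$ and completes the proof of \eqref{thm.green.1}--\eqref{thm.green.5}.
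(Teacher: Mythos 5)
Your proposal is correct and follows essentially the same route as the paper: reduction of the longitudinal part to a damped second-order ODE for $\hat\rho$ and $\tilde k\cdot\hat u$ via the constraints, and reduction of the transverse part to the third-order ODE \eqref{a.SM.p04} for $M_2$ solved through Lemma \ref{a.lem.root}, with $M_1,M_3$ recovered by variation of constants. You also correctly single out the only delicate verification, namely the cancellations $M_{1,0}+c_4=0$ and $M_{3,0}+ik\times c_5=0$, which is exactly the ``tenuous computations'' step in the paper.
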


\subsection{Refined $L^p$-$L^q$ time-decay property}\label{sec.lhs.4}

In this subsection, we use Theorem \ref{thm.green} to obtain some refined  $L^p$-$L^q$ time-decay property for each component in the solution $[\rho,u,E,B]$. For that, we first find the delicate time-frequency pointwise estimates on the Fourier transforms $\hat{\rho},\hat{u},\hat{E}$ and $\hat{B}$ in the following

\begin{lemma}\label{lem.4tfbd}
Let $U=[\rho,u,E,B]$ be the solution to the linearized homogeneous system \eqref{lhs} with initial data $U_0=[\rho_0,u_0,E_0,B_0]$ satisfying \eqref{lhs.cc}. Then, there are constants $\la>0$, $C>0$ such that for all $t\geq 0, k\in\R^3$,
\begin{equation}\label{lem.4tfbd.1}
    |\hat{\rho}(t,k)|\leq C e^{-\frac{t}{2}}|[\hat{\rho}_0(k),\hat{u}_0(k)]|,
\end{equation}
\begin{multline}
\label{lem.4tfbd.2}
 |\hat{u}(t,k)|\leq   C e^{-\frac{t}{2}}|[\hat{\rho}_0(k),\hat{u}_0(k),\hat{E}_0(k)]|\\[2mm]
 +C|[\hat{u}_0(k),\hat{E}_0(k),\hat{B}_0(k)]|\cdot
\left\{\begin{array}{ll}
   \left(e^{-\la t}+|k|e^{-\la |k|^2 t}\right) &\ \ \text{if}\ |k|\leq 1\\[2mm]
    \left(e^{-\la t}+\frac{1}{|k|}e^{-\frac{\la t}{|k|^2}}\right) &\ \ \text{if}\ |k|\geq 1,
 \end{array}\right.
\end{multline}
\begin{multline}
\label{lem.4tfbd.3}
 |\hat{E}(t,k)|\leq   C e^{-\frac{t}{2}}|[\hat{u}_0(k),\hat{E}_0(k)]|\\[2mm]
 +C|[\hat{u}_0(k),\hat{E}_0(k),\hat{B}_0(k)]|\cdot
\left\{\begin{array}{ll}
   \left(e^{-\la t}+|k|e^{-\la |k|^2 t}\right) &\ \ \text{if}\ |k|\leq 1\\[2mm]
    \left(\frac{1}{|k|^2}e^{-\la t}+e^{-\frac{\la t}{|k|^2}}\right) &\ \ \text{if}\ |k|\geq 1,
 \end{array}\right.
\end{multline}
and
\begin{equation}\label{lem.4tfbd.4}
    |\hat{B}(t,k)|\leq C|[\hat{u}_0(k),\hat{E}_0(k),\hat{B}_0(k)]| \cdot
\left\{\begin{array}{ll}
   \left(|k|e^{-\la t}+e^{-\la |k|^2 t}\right) &\ \ \text{if}\ |k|\leq 1\\[2mm]
    \left(\frac{1}{|k|}e^{-\la t}+e^{-\frac{\la t}{|k|^2}}\right) &\ \ \text{if}\ |k|\geq 1.
    \end{array}\right.
\end{equation}

\end{lemma}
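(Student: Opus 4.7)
The strategy is to feed the explicit representation from Theorem \ref{thm.green} into the pointwise estimates regime by regime, using the asymptotic information on the roots $\sigma,\beta,\omega$ from Lemma \ref{a.lem.root} to track how the coefficients $c_1,c_2,c_3$ and the algebraic prefactors $1/(1+\sigma),\,1/\sigma,\,1/((1+\beta)^2+\omega^2),\,1/(\beta^2+\omega^2)$ depend on $|k|$.

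First, \eqref{lem.4tfbd.1} is essentially immediate from \eqref{a.rho.p3}: the factors $\cos(\sqrt{3/4+\gamma|k|^2}t)$, $\sin(\sqrt{3/4+\gamma|k|^2}t)/\sqrt{3/4+\gamma|k|^2}$, and $k/\sqrt{3/4+\gamma|k|^2}$ are uniformly bounded in $k$, so the $e^{-t/2}$ prefactor gives the claim. Next I would decompose $\hat{u}=\hat{u}_\parallel+\hat{u}_\perp$ and $\hat{E}=\hat{E}_\parallel+\hat{E}_\perp$, and note $\hat{B}=\hat{B}_\perp$ since $k\cdot\hat{B}=0$. The parallel components are handled by \eqref{thm.green.3}--\eqref{thm.green.5}: the same boundedness argument as for $\hat{\rho}$ yields the $Ce^{-t/2}|[\hat{\rho}_0,\hat{u}_0,\hat{E}_0]|$ contribution to \eqref{lem.4tfbd.2} and the $Ce^{-t/2}|[\hat{u}_0,\hat{E}_0]|$ contribution to \eqref{lem.4tfbd.3}.

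The core of the work is the three perpendicular components $M_1,M_2,M_3$ given by \eqref{def.M1}, \eqref{def.M2}, \eqref{def.M3}. I would process them through two cases. In the low-frequency regime $|k|\leq 1$, Lemma \ref{a.lem.root} gives $\sigma\sim-|k|^2$, $\beta\sim-1/2$, $\omega\sim\sqrt{3}/2$, so $e^{\sigma t}\lesssim e^{-\lambda|k|^2 t}$ and $e^{\beta t}\lesssim e^{-\lambda t}$, while the denominator $3\sigma^2+2\sigma+1+|k|^2$ is bounded above and below. Inspecting \eqref{def.C123}, the factor $\sigma+1$ and the cross product $ik\times$ contribute a gain of $|k|$ (or $|k|^2$) on the $c_1$ entries, giving $|c_1|\leq C|k|\,|[M_{1,0},M_{2,0},M_{3,0}]|$ and $|c_2|,|c_3|\leq C\,|[M_{1,0},M_{2,0},M_{3,0}]|$. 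Combined with the prefactors $1/(1+\sigma),\,1/\sigma,\,1/((1+\beta)^2+\omega^2),\,1/(\beta^2+\omega^2)$ (all bounded or in the case of $1/\sigma$ producing a $1/|k|^2$ that is absorbed by the $|k|$ in front of $M_3$'s $ik\times$), this yields the claimed $|k|\leq 1$ bounds, including the extra $|k|$ factor on $e^{-\lambda|k|^2 t}$ appearing in \eqref{lem.4tfbd.2}, \eqref{lem.4tfbd.3} and $\hat{B}$.

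In the high-frequency regime $|k|\geq 1$, Lemma \ref{a.lem.root} gives $\sigma+1,\beta=O(|k|^{-2})$ and $\omega\sim|k|$, so $e^{\sigma t}\lesssim e^{-\lambda t}$ and $e^{\beta t}\lesssim e^{-\lambda t/|k|^2}$, while the denominator is $\sim|k|^2$. A similar bookkeeping yields $|c_1|\leq C|k|^{-2}\,|[M_{1,0},M_{2,0},M_{3,0}]|$ and $|c_2|,|c_3|\leq C\,|[M_{1,0},M_{2,0},M_{3,0}]|$. The algebraic multipliers now behave as $1/(1+\sigma)\sim|k|^2$, $1/\sigma\sim 1$, and $1/((1+\beta)^2+\omega^2)\sim|k|^{-2}$; the trigonometric combinations involving $\omega$ contribute a factor $|k|$. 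Putting these together produces exactly the two competing exponentials $e^{-\lambda t}$ and $e^{-\lambda t/|k|^2}$, with the prefactor powers of $|k|$ matching \eqref{lem.4tfbd.2}--\eqref{lem.4tfbd.4}, in particular $1/|k|$ on the $e^{-\lambda t/|k|^2}$ piece of $M_1$ and $M_3$, and $1/|k|^2$ on the $e^{-\lambda t}$ piece of $M_2$.

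The main obstacle is purely bookkeeping: I will need to check all nine entries of the $c_i$ matrix \eqref{def.C123} against the four algebraic multipliers in both frequency regimes to be sure that every cross product $ik\times M_{3,0}$ is compensated by a sufficiently negative power of $|k|$ coming from $\sigma+1$ or the $1/((1+\beta)^2+\omega^2)$ factor, so that no term produces a worse power of $|k|$ than those appearing on the right-hand sides of \eqref{lem.4tfbd.2}--\eqref{lem.4tfbd.4}. Once this is in place, adding the parallel and perpendicular bounds and invoking $|\hat{u}_\parallel|+|\hat{u}_\perp|\sim|\hat{u}|$ (and likewise for $\hat{E}$) completes the proof.
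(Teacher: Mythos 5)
Your proposal follows essentially the same route as the paper: decompose into parallel and perpendicular parts, bound the parallel part $[\hat{\rho},\hat{u}_\parallel,\hat{E}_\parallel]$ by the uniformly bounded entries of $G^{I}_{7\times 7}$ times $e^{-t/2}$, and then estimate $M_1,M_2,M_3$ by feeding the asymptotics of $\si,\be,\om$ from Lemma \ref{a.lem.root} into the coefficient matrix \eqref{def.C123} and the algebraic prefactors, separately for $|k|\leq 1$ and $|k|\geq 1$. Your bookkeeping of the powers of $|k|$ (e.g.\ $|c_1|\lesssim|k|$ for small $|k|$ and $|c_1|\lesssim|k|^{-2}$ for large $|k|$, with $\om/((1+\be)^2+\om^2)\sim|k|^{-1}$ producing the $\frac{1}{|k|}e^{-\la t/|k|^2}$ term) agrees with the paper's computation, so the plan is correct.
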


\begin{proof}
Recall the decomposition \eqref{thm.green.1} of $[\hat{\rho},\hat{u},\hat{E},\hat{B}]$. It is straightforward to obtain upper bounds of each component in the first part $[\hat{\rho},\hat{u}_\parallel,\hat{E}_\parallel,0]$ due to \eqref{thm.green.3} and \eqref{thm.green.5}, which lead to \eqref{lem.4tfbd.1} and the first term on the r.h.s. of both \eqref{lem.4tfbd.2} and \eqref{lem.4tfbd.3}. The rest is to find the upper bounds of the second part $[0,\hat{u}_\perp,\hat{E}_\perp,\hat{B}_\perp]$ or equivalently $[M_1,M_2,M_3]$ in terms of $[\hat{u}_0,\hat{E}_0,\hat{B}_0]$ by \eqref{thm.green.2}. Next, let us consider the upper bound of $M_1(t,k)$ defined in \eqref{def.M1}. In fact, by Lemma \ref{a.lem.root}, it is straightforward to check \eqref{def.C123}  to obtain
\begin{align*}
&  \left[\begin{array}{c}
     c_1 \\ c_2 \\ c_3
   \end{array}\right]=
     \left[\begin{array}{rrr}
      -O(1)|k|^2\FI_3 &   -O(1)|k|^2\FI_3    &  O(1)|k|i\tilde{k}\times\\
      -O(1)|k|^2\FI_3 &         O(1)\FI_3    & -O(1)|k|i\tilde{k}\times\\
            O(1)\FI_3 &         O(1)\FI_3    &  O(1)|k|i\tilde{k}\times
    \end{array}\right]
    \left[\begin{array}{c}
       M_{1,0}\\ M_{2,0} \\ M_{3,0}
    \end{array}\right]
\end{align*}
as $|k|\to 0$, and
\begin{align*}
&  \left[\begin{array}{c}
     c_1 \\ c_2 \\ c_3
   \end{array}\right]=
     \left[\begin{array}{rrr}
      -O(1)|k|^{-2}\FI_3 &   -O(1)|k|^{-4}\FI_3    &  O(1)|k|^{-3}i\tilde{k}\times\\
      -O(1)|k|^{-2}\FI_3 &            O(1)\FI_3    & -O(1)|k|^{-3}i\tilde{k}\times\\
       O(1)|k|^{-1}\FI_3 &    O(1)|k|^{-3}\FI_3    &          O(1)i\tilde{k}\times
    \end{array}\right]
    \left[\begin{array}{c}
       M_{1,0}\\ M_{2,0} \\ M_{3,0}
    \end{array}\right]
\end{align*}
as $|k|\to \infty$. Moreover, one has
\begin{equation*}
    \frac{1+\be}{(1+\be)^2+\om^2}=\left\{
    \begin{array}{ll}
      O(1) & \ \ \text{as $|k|\to 0$},\\
      O(1)|k|^{-2} & \ \ \text{as $|k|\to \infty$},
    \end{array}\right.
\end{equation*}
and
\begin{equation*}
    \frac{\om}{(1+\be)^2+\om^2}=\left\{
    \begin{array}{ll}
      O(1) & \ \ \text{as $|k|\to 0$},\\
      O(1)|k|^{-1} & \ \ \text{as $|k|\to \infty$}.
    \end{array}\right.
\end{equation*}
Therefore, after plugging the above computations into  \eqref{def.M1}, it holds that
\begin{align*}
% \nonumber to remove numbering (before each equation)
  M_1(t,k) =& -\left(-O(1)|k|^2M_{1,0}-O(1)|k|^2M_{2,0}+O(1)|k|i\tilde{k}\times M_{3,0}\right)\\
  &\qquad\qquad\qquad\qquad\qquad\qquad \cdot O(1) e^{\si(k) t}\\
  & -\left(O(1)|k|^2M_{1,0}+O(1)M_{2,0}-O(1)|k|i\tilde{k}\times M_{3,0}\right)\\
  & \qquad\qquad\qquad\qquad\qquad\qquad \cdot \left(O(1)\cos \om t + O(1)\sin \om t\right)e^{\be(k)  t}\\
  & -\left(O(1)M_{1,0}+O(1)M_{2,0}+O(1)|k|i\tilde{k}\times M_{3,0}\right)\\
  & \qquad\qquad\qquad\qquad\qquad\qquad \cdot \left(O(1)\sin t - O(1)\cos \om t\right)e^{\be(k)  t},
\end{align*}
as $|k|\to 0$, and
\begin{align*}
% \nonumber to remove numbering (before each equation)
  M_1(t,k) =& -\left(-O(1)|k|^{-2}M_{1,0}-O(1)|k|^{-4}M_{2,0}+O(1)|k|^{-3}i\tilde{k}\times M_{3,0}\right)\\
  &\qquad\qquad\cdot O(1)|k|^2e^{\si(k) t}\\
  & -\left(O(1)|k|^{-2}M_{1,0}+O(1)M_{2,0}-O(1)|k|^{-3}i\tilde{k}\times M_{3,0}\right)\\
  & \qquad\qquad\cdot \left(O(1)|k|^{-2}\cos \om t + O(1)|k|^{-1}\sin \om t\right)e^{\be(k)  t}\\
  & -\left(O(1)|k|^{-1}M_{1,0}+O(1)|k|^{-3}M_{2,0}+O(1)i\tilde{k}\times M_{3,0}\right)\\
  & \qquad\qquad\cdot \left(O(1)|k|^{-2}\sin\om t - O(1)|k|^{-1}\cos \om t\right)e^{\be(k)  t},
\end{align*}
as $|k|\to \infty$. Notice that due to Lemma \ref{a.lem.root} again, there is $\la>0$ such that
\begin{equation*}
    \left\{\begin{array}{ll}
       \dis \si(k)\geq -\la |k|^2,\ \ \be(k)=-\frac{\si(k)+1}{2}\geq -\la &\ \ \text{over $|k|\leq 1$},\\[3mm]
        \dis \si(k)\geq -\la,\ \ \be(k)=-\frac{\si(k)+1}{2}\geq -\frac{\la t}{|k|^2} &\ \ \text{over $|k|\geq 1$}.
    \end{array}\right.
\end{equation*}
Therefore, it follows that for $|k|\leq 1$,
\begin{equation*}
    |M_1(t,k)|\leq C (e^{-\la t}+|k|e^{-\la |k|^2t})|[M_{1,0},M_{2,0},M_{3,0}]|,
\end{equation*}
and for $|k|\geq 1$,
\begin{equation*}
    |M_1(t,k)|\leq C \left(e^{-\la t}+\frac{1}{|k|}e^{-\frac{\la t} {|k|^2}}\right)|[M_{1,0},M_{2,0},M_{3,0}]|.
\end{equation*}
Furthermore, since $|[M_{1,0},M_{2,0},M_{3,0}]|\leq |[\hat{u}_0(k),\hat{E}_0(k),\hat{B}_0(k)]|$, one has
\begin{equation*}
     |M_1(t,k)|\leq C|[\hat{u}_0(k),\hat{E}_0(k),\hat{B}_0(k)]|\cdot
\left\{\begin{array}{ll}
   \left(e^{-\la t}+|k|e^{-\la |k|^2 t}\right) &\ \ \text{if}\ |k|\leq 1\\[2mm]
    \left(e^{-\la t}+\frac{1}{|k|}e^{-\frac{\la t}{|k|^2}}\right) &\ \ \text{if}\ |k|\geq 1,
 \end{array}\right.
\end{equation*}
that is the upper bound of $\hat{u}_\perp(t,k)$ corresponding to the second term on the r.h.s. of \eqref{lem.4tfbd.2}. Hence, \eqref{lem.4tfbd.2} is proved. Finally, \eqref{lem.4tfbd.3} and \eqref{lem.4tfbd.4} can be proved in the completely same way as for  \eqref{lem.4tfbd.2}. Here, we only mention that to estimate $M_3(t,k)$ defined in \eqref{def.M3}, we need to use
\begin{equation*}
    \frac{\be}{\be^2+\om^2}=\left\{
    \begin{array}{ll}
      -O(1) & \ \ \text{as $|k|\to 0$},\\
      -O(1)|k|^{-4} & \ \ \text{as $|k|\to \infty$},
    \end{array}\right.
\end{equation*}
and
\begin{equation*}
    \frac{\om}{\be^2+\om^2}=\left\{
    \begin{array}{ll}
      O(1) & \ \ \text{as $|k|\to 0$},\\
      O(1)|k|^{-1} & \ \ \text{as $|k|\to \infty$}.
    \end{array}\right.
\end{equation*}
All the rest details are omitted for simplicity. This completes the proof of Lemma \ref{lem.4tfbd}.
\end{proof}

Based on Lemma \ref{lem.4tfbd}, the time-decay property for the full solution $[\rho,u,E,B]$ obtained in Theorem \ref{thm.lhs} can be improved as follows.

\begin{theorem}\label{thm.lhs.re}
Let $1\leq p,r\leq 2\leq q\leq \infty$, $\ell\geq 0$ and let $m\geq 0$ be an integer. Suppose $U(t)=e^{tL}U_0$ is the solution to the Cauchy problem \eqref{lhs}-\eqref{lhs.id} with initial data $U_0=[\rho_0,u_0,E_0,B_0]$ satisfying \eqref{lhs.cc}. Then, $U=[\rho,u,E,B]$ satisfies the following time-decay property:
\begin{equation}\label{thm.lhs.re.1}
\|\na^m \rho(t)\|_{L^q}\leq C e^{-\frac{t}{2}} \left(\|[\rho_0,u_0]\|_{L^p}+\|\na^{m+[3(\frac{1}{r}-\frac{1}{q})]_+}[\rho_0,u_0]\|_{L^r}\right),
\end{equation}
\begin{multline}
\label{thm.lhs.re.2}
\|\na^m u(t)\|_{L^q}
\leq C e^{-\frac{t}{2}}\left(\|\rho_0\|_{L^p}+\|\na^{m+[3(\frac{1}{r}-\frac{1}{q})]_+}\rho_0\|_{L^r}\right)\\
\qquad+C(1+t)^{-\frac{3}{2}(\frac{1}{p}-\frac{1}{q})-\frac{m+1}{2}}\|[u_0,E_0,B_0]\|_{L^p}\\
+C(1+t)^{-\frac{\ell+1}{2}}
\|\na^{m+[\ell+3(\frac{1}{r}-\frac{1}{q})]_+} [u_0,E_0,B_0]\|_{L^r},
\end{multline}
\begin{multline}
\label{thm.lhs.re.3}
\|\na^m E(t)\|_{L^q}
\leq C(1+t)^{-\frac{3}{2}(\frac{1}{p}-\frac{1}{q})-\frac{m+1}{2}}\|[u_0,E_0,B_0]\|_{L^p}\\
+C(1+t)^{-\frac{\ell}{2}}
\|\na^{m+[\ell+3(\frac{1}{r}-\frac{1}{q})]_+} [u_0,E_0,B_0]\|_{L^r},
\end{multline}
and
\begin{multline}
\label{thm.lhs.re.4}
\|\na^m B(t)\|_{L^q}
\leq C(1+t)^{-\frac{3}{2}(\frac{1}{p}-\frac{1}{q})-\frac{m}{2}}\|[u_0,E_0,B_0]\|_{L^p}\\
+C(1+t)^{-\frac{\ell}{2}}
\|\na^{m+[\ell+3(\frac{1}{r}-\frac{1}{q})]_+} [u_0,E_0,B_0]\|_{L^r},
\end{multline}
for any $t\geq 0$, where $C=C(p,q,r,\ell,m)$ and $[\ell+3(\frac{1}{r}-\frac{1}{q})]_+$ is defined in \eqref{def.index}.
\end{theorem}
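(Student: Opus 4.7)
\medskip

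\noindent\textbf{Proof plan for Theorem \ref{thm.lhs.re}.}
The plan is to follow the same Fourier template used for Theorem \ref{thm.lhs}, but to apply it component-by-component, feeding in the sharper pointwise bounds from Lemma \ref{lem.4tfbd} in place of the crude bound \eqref{cor.ptf.est}. Concretely, for each $\Psi\in\{\rho,u,E,B\}$ I would start from Hausdorff--Young,
$\|\na^m\Psi(t)\|_{L^q}\leq C\||k|^m\widehat{\Psi}(t,k)\|_{L^{q'}}$ with $1/q+1/q'=1$, split the integration as
$\||k|^m\widehat{\Psi}\|_{L^{q'}}\leq \||k|^m\widehat{\Psi}\|_{L^{q'}(|k|\leq 1)}+\||k|^m\widehat{\Psi}\|_{L^{q'}(|k|\geq 1)},$
and then on each of the two frequency regions plug in the corresponding line of \eqref{lem.4tfbd.1}--\eqref{lem.4tfbd.4}. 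The $L^p$ half of the final estimate will come from the low-frequency region via H\"older plus Hausdorff--Young (exactly as in \eqref{thm.lhs.p2}); the $L^r$ half together with the regularity loss index $[\ell+3(\frac{1}{r}-\frac{1}{q})]_+$ will come from the high-frequency region via the interpolation done in \eqref{thm.lhs.p3}--\eqref{thm.lhs.p4} of the proof of Theorem \ref{thm.lhs}.

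For $\rho$ the bound \eqref{lem.4tfbd.1} is a pure exponential $Ce^{-t/2}|[\hat\rho_0,\hat u_0]|$ with no frequency weight, so the factor $e^{-t/2}$ factors out of both regions and the two halves give respectively $\|[\rho_0,u_0]\|_{L^p}$ (using $|k|^m\leq 1$ on $|k|\leq 1$) and $\|\na^{m+[3(1/r-1/q)]_+}[\rho_0,u_0]\|_{L^r}$, producing \eqref{thm.lhs.re.1}. For $u$ the first term on the right of \eqref{lem.4tfbd.2} is handled exactly as for $\rho$ and furnishes the first line of \eqref{thm.lhs.re.2}; the interesting contribution is the second term. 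In the low-frequency region, the factor $|k|e^{-\la|k|^2 t}$ combines with $|k|^m$ to give $|k|^{m+1}e^{-\la|k|^2 t}$, and the standard heat-kernel estimate then gains a factor of $(1+t)^{-1/2}$ compared to \eqref{thm.lhs.p2}, yielding the rate $(1+t)^{-\frac{3}{2}(\frac{1}{p}-\frac{1}{q})-\frac{m+1}{2}}$ multiplied by $\|[u_0,E_0,B_0]\|_{L^p}$. In the high-frequency region, the factor $|k|^{-1}e^{-\la t/|k|^2}$ yields $\sup_{|k|\geq 1}|k|^{-(\ell+1)}e^{-\la t/|k|^2}\leq C(1+t)^{-(\ell+1)/2}$, which trades a power of $|k|$ for an additional half-power of $t$, and the rest of the argument of \eqref{thm.lhs.p4} produces the $L^r$ norm at the derivative order $m+[\ell+3(\frac{1}{r}-\frac{1}{q})]_+$. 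Summing these contributions gives \eqref{thm.lhs.re.2}.

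The treatment of $E$ and $B$ is structurally identical. For $E$, the first term in \eqref{lem.4tfbd.3} is exponential in $t$ and absorbed into the remaining estimates; the low-frequency factor $|k|e^{-\la|k|^2 t}$ again gives the improved rate $-\frac{m+1}{2}$ appearing in \eqref{thm.lhs.re.3}, while the high-frequency bracket $|k|^{-2}e^{-\la t}+e^{-\la t/|k|^2}$ contributes the standard $\sup_{|k|\geq 1}|k|^{-\ell}e^{-\la t/|k|^2}\leq C(1+t)^{-\ell/2}$ decay with no extra $|k|$ gain, consistent with the $\ell/2$ rate in \eqref{thm.lhs.re.3}. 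For $B$, the bound \eqref{lem.4tfbd.4} contains no improvement factor at either frequency scale, so the same Fourier template reproduces \eqref{thm.lhs.re.4} at the generic rate $-\frac{3}{2}(\frac{1}{p}-\frac{1}{q})-\frac{m}{2}$ with the same regularity index $[\ell+3(\frac{1}{r}-\frac{1}{q})]_+$.

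The main obstacle I expect is purely bookkeeping: one has to interpolate correctly between $L^{q'}$ and $L^{r'}$ at high frequency using the cascade H\"older inequality of \eqref{thm.lhs.p4} in order to recover the index $[\ell+3(\frac{1}{r}-\frac{1}{q})]_+$ defined by \eqref{def.index}, carefully distinguishing the case $r=q=2$ with $\ell\in\mathbb{N}$ (where no $\varepsilon$-loss is needed and the index is exactly $\ell$) from the generic case (where an arbitrarily small $\varepsilon>0$ is absorbed into the ceiling $[\cdot]_+$). Once this is set up uniformly, the component-by-component split above is mechanical; the fundamental new input beyond Theorem \ref{thm.lhs} is only the differential structure of \eqref{lem.4tfbd.1}--\eqref{lem.4tfbd.4}, which explains both the faster decay of $\rho,u,E$ and the slower decay of $B$ in the conclusion.
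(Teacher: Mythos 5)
Your proposal is correct and follows essentially the same route as the paper's proof: Hausdorff--Young plus the low/high frequency splitting of Theorem \ref{thm.lhs}, with the pointwise bounds of Lemma \ref{lem.4tfbd} substituted for \eqref{cor.ptf.est}, the extra factor $|k|$ turned into the heat-kernel gain $(1+t)^{-1/2}$ at low frequency, and $\sup_{|k|\ge 1}|k|^{-(\ell+1)}e^{-\la t/|k|^2}\le C(1+t)^{-(\ell+1)/2}$ together with the H\"older cascade of \eqref{thm.lhs.p4} at high frequency. The only bookkeeping point to watch is that the $e^{-t/2}|[\hat u_0,\hat E_0]|$ portion of the first term in \eqref{lem.4tfbd.2} must be absorbed into the polynomially decaying terms of \eqref{thm.lhs.re.2} (only the $\rho_0$ contribution survives in the exponential line), which is immediate since $e^{-t/2}$ dominates any polynomial rate.
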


\begin{proof}
Take $1\leq p,r\leq 2\leq q\leq \infty$ and an integer $m\geq 0$. Similar to \eqref{thm.lhs.p1}, it follows from \eqref{lem.4tfbd.1} that
\begin{equation*}
    \|\na^m\rho(t)\|_{L^q_x}\leq C e^{-\frac{t}{2}}\left(\left\||k|^{m}[\hat{\rho}_0,\hat{u}_0]\right\|_{L^{q'}(|k|\leq 1)}+\left\||k|^{m}[\hat{\rho}_0,\hat{u}_0]\right\|_{L^{q'}(|k|\geq 1)}\right).
\end{equation*}
It further holds that
\begin{equation*}
  \left\||k|^{m}[\hat{\rho}_0,\hat{u}_0]\right\|_{L^{q'}(|k|\leq 1)}\leq C\|[\rho_0,u_0]\|_{L^p}
\end{equation*}
and
\begin{equation*}
  \left\||k|^{m}[\hat{\rho}_0,\hat{u}_0]\right\|_{L^{q'}(|k|\geq 1)}\leq C  \|\na^{m+[3(\frac{1}{r}-\frac{1}{q})]_+}[\rho_0,u_0]\|_{L^r}
\end{equation*}
where we obtained the second inequality by using the similar method as for \eqref{thm.lhs.p4} which can be applied with $\ell=0$. Then, \eqref{thm.lhs.re.1} follows. To prove \eqref{thm.lhs.re.2}, it similarly holds that
\begin{equation*}
    \|\na^m u(t)\|_{L^q_x}\leq C \left\||k|^{m}\hat{u}(t)\right\|_{L^{q'}(|k|\leq 1)}+C\left\||k|^{m}\hat{u}(t)\right\|_{L^{q'}(|k|\geq 1)}.
\end{equation*}
where from \eqref{lem.4tfbd.2}, the first part is bounded by
\begin{multline*}
% \nonumber to remove numbering (before each equation)
 \left\||k|^{m}\hat{u}(t)\right\|_{L^{q'}(|k|\leq 1)} \leq C e^{-\frac{t}{2}}\|\rho_0\|_{L^p}+Ce^{-\la t}\|[u_0,E_0,B_0]\|_{L^p}\\
 +C(1+t)^{-\frac{3}{2}(\frac{1}{p}-\frac{1}{q})-\frac{m+1}{2}}\|[u_0,E_0,B_0]\|_{L^p},
\end{multline*}
and the second part is bounded by
\begin{multline*}
% \nonumber to remove numbering (before each equation)
 \left\||k|^{m}\hat{u}(t)\right\|_{L^{q'}(|k|\geq 1)} \leq C e^{-\frac{t}{2}}\left\||k|^{m+(\frac{1}{r}-\frac{1}{q})(3+\eps)}
 [\hat{\rho}_0,\hat{u}_0,\hat{E}_0]\right\|_{L^{r'}(|k|\geq 1)}\\
 +Ce^{-\la t}\left\||k|^{m+(\frac{1}{r}-\frac{1}{q})(3+\eps)}
 [\hat{u}_0,\hat{E}_0,\hat{B}_0]\right\|_{L^{r'}(|k|\geq 1)}\\
 +C(1+t)^{-\frac{\ell+1}{2}}\left\||k|^{m+\ell+(\frac{1}{r}-\frac{1}{q})(3+\eps)}
 [\hat{u}_0,\hat{E}_0,\hat{B}_0]\right\|_{L^{r'}(|k|\geq 1)}.
\end{multline*}
Here, $\ell\geq 0$, $\eps>0$ is a small enough constant, and also we used
\begin{equation*}
    \sup_{|k|\geq 1} \left(\frac{1}{|k|^{\ell+1}}e^{-\frac{\la t}{4|k|^2}}\right) \leq C(1+t)^{-\frac{\ell+1}{2}}.
\end{equation*}
Collecting the above estimates on $u$ yields \eqref{thm.lhs.re.2}. In the completely same way, \eqref{thm.lhs.re.3} and \eqref{thm.lhs.re.4} follows from \eqref{lem.4tfbd.3} and \eqref{lem.4tfbd.4}, respectively and details of proof are omitted for simplicity. Here, we only remark that the first term on the r.h.s. of \eqref{lem.4tfbd.3} results from the fact that the term decaying in the slowest time rate over $|k|\leq 1$ on the r.h.s. of \eqref{lem.4tfbd.3} is
\begin{equation*}
C|[\hat{u}_0(k),\hat{E}_0(k),\hat{B}_0(k)]|
\cdot |k|e^{-\la |k|^2 t}
\end{equation*}
and hence
\begin{multline*}
   \left\||k|^{m+1}e^{-\la |k|^2 t}[\hat{u}_0(k),\hat{E}_0(k),\hat{B}_0(k)]\right\|_{L^{q'}(|k|\leq 1)}\\
   \leq   C(1+t)^{-\frac{3}{2}(\frac{1}{p}-\frac{1}{q})-\frac{m+1}{2}}\|[u_0,E_0,B_0]\|_{L^p}.
\end{multline*}
This completes the proof of Theorem \ref{thm.lhs.re}.
\end{proof}

For later use, from Theorem \ref{thm.lhs.re}, let us list some special cases in the following

\begin{corollary}\label{cor.lhs.re}
Suppose $U(t)=e^{tL}U_0$ is the solution to the Cauchy problem \eqref{lhs}-\eqref{lhs.id} with initial data $U_0=[\rho_0,u_0,E_0,B_0]$ satisfying \eqref{lhs.cc}. Then, $U=[\rho,u,E,B]$ satisfies the following time-decay property:
\begin{equation}\label{cor.linear.s.1}
    \left\{\begin{array}{l}
      \dis \|\rho(t)\|\leq Ce^{-\frac{t}{2}} \|[\rho_0,u_0]\|,\\[1mm]
\dis \|u(t)\| \leq  C e^{-\frac{t}{2}}\|\rho_0\|+C (1+t)^{-\frac{5}{4}}\|[u_0,E_0,B_0]\|_{L^1\cap \dot{H}^2},\\[1mm]
\dis \|E(t)\|\leq C(1+t)^{-\frac{5}{4}}\|[u_0,E_0,B_0]\|_{L^1\cap\dot{H}^3},\\[1mm]
\dis   \|B(t)\| \leq  C(1+t)^{-\frac{3}{4}}\|[u_0,E_0,B_0]\|_{L^1\cap\dot{H}^2},
    \end{array}\right.
\end{equation}
and
\begin{equation}\label{cor.linear.s.2}
    \left\{\begin{array}{l}
     \dis\|\rho(t)\|_{L^\infty}\leq Ce^{-\frac{t}{2}} \|[\rho_0,u_0]\|_{L^2\cap \dot{H}^2},\\[1mm]
\dis\|u(t)\|_{L^\infty} \leq  C e^{-\frac{t}{2}}\|\rho_0\|_{L^2\cap \dot{H}^2}+C (1+t)^{-2}\|[u_0,E_0,B_0]\|_{L^1\cap \dot{H}^5},\\[1mm]
\dis\|E(t)\|_{L^\infty}\leq C(1+t)^{-2}\|[u_0,E_0,B_0]\|_{L^1\cap\dot{H}^6},\\[1mm]
\dis  \|B(t)\|_{L^\infty} \leq  C(1+t)^{-\frac{3}{2}}\|[u_0,E_0,B_0]\|_{L^1\cap\dot{H}^5},
    \end{array}\right.
\end{equation}
and moreover,
\begin{equation}\label{cor.linear.s.3}
    \left\{\begin{array}{l}
     \dis \|\na B(t)\|\leq  C(1+t)^{-\frac{5}{4}}\|[u_0,E_0,B_0]\|_{L^1\cap \dot{H}^4},\\[1mm]
\dis \|\na^N[E(t),B(t)]\|\leq C(1+t)^{-\frac{5}{4}}\|[u_0,E_0,B_0]\|_{L^2\cap \dot{H}^{N+3}}.
    \end{array}\right.
\end{equation}

\end{corollary}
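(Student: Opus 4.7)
The plan is to obtain each inequality in Corollary \ref{cor.lhs.re} as a direct specialization of Theorem \ref{thm.lhs.re}, with appropriate choices of the exponents $p, q, r$, the decay parameter $\ell$, and the derivative order $m$. The one subtlety is that several of the target inequalities pair an exponentially decaying term involving the $L^2\cap \dot{H}^2$ norm of $\rho_0$ with a polynomially decaying term involving the $L^1$ norm of $[u_0,E_0,B_0]$, which cannot come from a single application of the theorem (which fixes one value of $p$ throughout). To handle this, I would return one step to Lemma \ref{lem.4tfbd}, whose pointwise bounds split each of $\hat\rho,\hat u,\hat E,\hat B$ into an exponentially decaying summand and a (low/high-frequency) polynomially decaying summand, and apply the Hausdorff--Young argument from the proof of Theorem \ref{thm.lhs.re} separately to each summand with its own choice of $(p,r,\ell)$.

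For \eqref{cor.linear.s.1} I would take $q=2$, $r=2$, $m=0$. The $\rho$ bound is immediate from \eqref{thm.lhs.re.1} with $p=r=2$, since $[3(1/r-1/q)]_+=0$. For the $u$, $E$, $B$ bounds the exponential summand is treated with $p=2$ (so no derivative loss occurs on $\rho_0$), while the polynomial summand is treated with $p=1$ and $\ell$ chosen so that the polynomial rate matches the target: $\ell=3/2$ for $u$ and $B$ gives rates $(1+t)^{-5/4}$ and $(1+t)^{-3/4}$, and $\ell=5/2$ for $E$ gives $(1+t)^{-5/4}$, with $[\ell]_-+1$ producing the claimed $\dot{H}^2$ or $\dot{H}^3$ regularity on $[u_0,E_0,B_0]$.

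For \eqref{cor.linear.s.2} I would set $q=\infty$, $r=2$, $m=0$, and again split the role of $p$: in the exponential summand take $p=r=2$, so $[3(1/r-1/q)]_+=[3/2]_+=2$ delivers the $L^2\cap \dot{H}^2$ norm on $\rho_0$; in the polynomial summand take $p=1$, producing the baseline rates $(1+t)^{-2}$ for $u$ and $E$ and $(1+t)^{-3/2}$ for $B$, and choose $\ell$ so that the high-frequency contribution matches the same rate while $m+[\ell+3/2]_+$ gives $\dot{H}^5, \dot{H}^6, \dot{H}^5$ respectively. For \eqref{cor.linear.s.3} take $q=r=2$: the $\|\nabla B(t)\|$ bound follows from \eqref{thm.lhs.re.4} with $p=1$, $m=1$, $\ell=5/2$, and the $\|\nabla^N[E,B](t)\|$ bound from \eqref{thm.lhs.re.3}--\eqref{thm.lhs.re.4} with $p=r=2$, $m=N$, $\ell=5/2$, where the polynomial base rate $(1+t)^{-(N+1)/2}$ or $(1+t)^{-N/2}$ is dominated by $(1+t)^{-5/4}$ since $N\geq 4$.

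The main obstacle is essentially bookkeeping: correctly evaluating $[\ell+3(1/r-1/q)]_+$ according to the piecewise definition \eqref{def.index} (integer part when $r=q=2$ and $\ell$ is an integer, $[\cdot]_-+1$ otherwise), and tracking which choice of $p$ governs the exponential versus the polynomial contribution in each line. No new analytic input beyond Theorem \ref{thm.lhs.re} and Lemma \ref{lem.4tfbd} is needed.
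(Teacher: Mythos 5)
Your proposal is correct and is essentially the paper's intended argument: the corollary is stated there without proof as a list of special cases of Theorem \ref{thm.lhs.re}, and your choices of $(p,q,r,\ell,m)$ and your evaluations of $[\ell+3(\frac1r-\frac1q)]_+$ all check out. Your observation that the exponential $\rho_0$-term and the polynomial $[u_0,E_0,B_0]$-term require different values of $p$, resolved by splitting at the level of Lemma \ref{lem.4tfbd} and the Hausdorff--Young step, is a legitimate and correctly handled detail that the paper leaves implicit.
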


\section{Decay in time for the nonlinear system}\label{sec.nonldecay}

In this section, we shall prove Proposition \ref{prop.s.decay} and Proposition \ref{prop.s.Lq} by bootstrap argument. Concerning the solution $V=[\si,v,\widetilde{E},\widetilde{B}]$ to the nonlinear Cauchy problem \eqref{s.e}-\eqref{s.e.id}, the first two subsections are devoted to obtaining the time-decay rates of the full instant energy $\|V(t)\|_N^2$ and the high-order instant energy $\|\na V(t)\|_{N-1}^2$, respectively, and in the last subsection, we investigate the time-decay rates in $L^q$ with $2\leq q\leq \infty$ for each component $\si,v,\widetilde{E}$ and $\widetilde{B}$ of the solution $V$.

In what follows, since we shall apply the linear $L^p$-$L^q$ time-decay property of the homogeneous system \eqref{lhs} studied in the previous section to the nonlinear case,  we need the mild form of the original nonlinear Cauchy problem \eqref{s.cr}-\eqref{s.cr.id}. Throughout this section, we suppose that $U=[\rho,u,E,B]$ is the solution to the Cauchy problem \eqref{s.cr}-\eqref{s.cr.id} with initial data $U_0=[\rho_0,u_0,E_0,B_0]$ satisfying \eqref{s.cr.cc}. Here, we remark that due to the transform \eqref{trans.1}, Proposition \ref{prop.s.exi} also holds for $U$. Then, the solution $U$ can be formally written as
\begin{equation}\label{mildf}
    U(t)=e^{t L} U_0+\int_0^t e^{(t-s)L}[g_1(s),g_2(s),g_3(s),0]ds,
\end{equation}
where $e^{tL}$ is defined in \eqref{def.lu} and the nonlinear source term takes the form of
\begin{equation}\label{mildf.s}
    \left\{\begin{array}{l}
      \dis g_1=-\na\cdot(\rho u),\\
      \dis g_2=-u\cdot \na u -u\times B-\ga[(1+\rho)^{\ga -2}-1]\na \rho,\\
      \dis g_3=\rho u.
    \end{array}\right.
\end{equation}
It should be pointed out that in the time integral term of \eqref{mildf}, given $0\leq s\leq t$, it makes sense that $e^{(t-s)L}$ acts on $[g_1(s),g_2(s),g_3(s),0]$ since $[g_1(s),g_2(s),g_3(s),0]$ satisfies the compatible condition \eqref{lhs.cc}.

\subsection{Time rate for the full instant energy functional}

In this subsection we shall prove the time-decay estimate \eqref{prop.s.decay.1} in Proposition \ref{prop.s.decay} for the full instant energy $\|V(t)\|_N^2$. The starting point is the following lemma which can be seen directly from the proof of Proposition \ref{prop.s.exi}.

\begin{lemma}
Let $V=[\si,v,\widetilde{E},\widetilde{B}]$ be the solution to the Cauchy problem \eqref{s.e}-\eqref{s.e.id} with initial data $V_0=[\si_0,v_0,\widetilde{E}_0,\widetilde{B}_0]$ satisfying \eqref{s.e.cc} in the sense of Proposition \ref{prop.s.exi}. Then, if $\CE_N(V_0)$ is sufficiently small,
\begin{equation}\label{lem.ief.1}
    \frac{d}{dt}\CE_{N}(V(t))+\la \CD_N(V(t))\leq 0
\end{equation}
holds for any $t\geq 0$, where $\CE_{N}(V(t))$, $\CD_N(V(t))$ are in the form of \eqref{def.ef} and \eqref{def.ef.d}, respectively.
\end{lemma}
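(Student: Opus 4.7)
The plan is to combine the a priori inequality from Theorem \ref{thm.ap} with the global smallness of the instant energy furnished by Proposition \ref{prop.s.exi}. More precisely, once we know that $\CE_N(V(t))$ remains bounded by a small constant uniformly in time, the nonlinear error term on the right-hand side of \eqref{thm.ap.2} can be absorbed into the dissipation on the left-hand side, and the resulting differential inequality is exactly \eqref{lem.ief.1}.

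First I would invoke Proposition \ref{prop.s.exi}: under the assumption that $\CE_N(V_0)$ is sufficiently small, the Cauchy problem admits a unique global solution $V\in C([0,\infty);H^N)\cap Lip([0,\infty);H^{N-1})$ satisfying $\CE_N(V(t))\leq \CE_N(V_0)$ for all $t\geq 0$. In particular, by choosing $\CE_N(V_0)\leq \de_2$ with $\de_2>0$ small enough, one has
\begin{equation*}
C\left[\CE_N(V(t))^{1/2}+\CE_N(V(t))\right]\leq C\left[(2\de_2)^{1/2}+2\de_2\right]\leq \frac{\la}{2}
\end{equation*}
for every $t\geq 0$, where $\la,C$ are the constants appearing in \eqref{thm.ap.2}. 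Note also that the bootstrap bound $\sup_t\|\si(t)\|_N\leq 1$ required by Theorem \ref{thm.ap} is automatically met once $\de_2$ is small, since $\|\si\|_N^2\leq C_2\CE_N(V)$.

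Next, I would apply Theorem \ref{thm.ap} on the whole time interval $[0,\infty)$. Substituting the above smallness bound into \eqref{thm.ap.2} yields
\begin{equation*}
\frac{d}{dt}\CE_N(V(t))+\la \CD_N(V(t))\leq \frac{\la}{2}\CD_N(V(t)),
\end{equation*}
hence
\begin{equation*}
\frac{d}{dt}\CE_N(V(t))+\frac{\la}{2} \CD_N(V(t))\leq 0,
\end{equation*}
which, after renaming $\la/2$ as $\la$, is precisely \eqref{lem.ief.1}.

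There is no real obstacle here; the only minor subtlety is the regularity of $V$. Theorem \ref{thm.ap} was formally stated for smooth solutions, but since Proposition \ref{prop.s.exi} produces a solution in $C([0,\infty);H^N)\cap Lip([0,\infty);H^{N-1})$ with $N\geq 4$, the energy functional $\CE_N(V(t))$ is absolutely continuous in $t$ and the differential inequality holds a.e. in $t$ (and, upon integration, in the sense stated), exactly as was already used in the proof of Proposition \ref{prop.s.exi} to derive \eqref{prop.s.exi.2}. Thus the lemma follows with no additional work.
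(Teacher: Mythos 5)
Your proof is correct and follows exactly the route the paper intends: the lemma is stated there as following "directly from the proof of Proposition \ref{prop.s.exi}", namely by combining the a priori inequality \eqref{thm.ap.2} with the uniform smallness $\CE_N(V(t))\leq\CE_N(V_0)\leq\de_2$ so that the nonlinear right-hand side is absorbed into $\la\CD_N(V(t))$. Your remark on the regularity needed to run the differential inequality for the $C([0,\infty);H^N)\cap Lip([0,\infty);H^{N-1})$ solution matches the paper's own caveat in the proof of Proposition \ref{prop.s.exi}.
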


Notice $\CE_{N}(V(t))
\sim\|V(t)\|_N^2$, and hence it is equivalent to consider their time-decay rates. Though \eqref{lem.ief.1} implies that $\CE_{N}(V(t))$ is a non-increasing in time Lyapunov functional, its dissipation rate $\CD_N(V(t))$ is so weak that it does not include both the zero-term $\|\widetilde{B}(t)\|^2$ and the highest-order term $\|\na^N[\widetilde{E}(t),\widetilde{B}(t)]\|^2$. The main idea of overcoming these two difficulties is that for the latter, we apply the time-weighted estimate to the inequality \eqref{lem.ief.1} and use iteration in both the time rate and the derivative order to remove the regularity-loss effects of the  dissipative rate $\CD_N(V(t))$, and for the former, we apply the linear $L^p$-$L^q$ time-decay to bound $\|\widetilde{B}(t)\|^2$ in terms of initial data and the nonlinear source term. The similar idea has been mentioned in \cite{DUY}.

Now, we begin with the time-weighted estimate and iteration for the Lyapunov inequality \eqref{lem.ief.1}. Let $\ell\geq 0$. Multiplying \eqref{lem.ief.1} by $(1+t)^{\ell}$ and taking integration over $[0,t]$ gives
\begin{multline*}
    (1+t)^{\ell}\CE_{N}(V(t))+\la \int_0^t (1+s)^{\ell}\CD_{N}(V(s))ds\\
    \leq \CE_{N}(V_0)+\ell \int_0^t(1+s)^{\ell-1}\CE_{N}(V(s)).
\end{multline*}
Noticing
\begin{equation*}
\CE(V)\leq C(\|\widetilde{B}\|^2+\CD_{N+1}(V)),
\end{equation*}
it follows that
\begin{multline*}
    (1+t)^{\ell}\CE_{N}(V(t))+\la \int_0^t (1+s)^{\ell}\CD_{N}(V(s))ds\\
    \leq \CE_{N}(V_0)+C\ell \int_0^t(1+s)^{\ell-1}\|\widetilde{B}(s)\|^2ds\\
    + C\ell \int_0^t(1+s)^{\ell-1}\CD_{N+1}(V(s)))ds.
\end{multline*}
Similarly, it holds that
\begin{multline*}
    (1+t)^{\ell-1}\CE_{N+1}(V(t))+\la \int_0^t (1+s)^{\ell-1}\CD_{N+1}(V(s))ds\\
    \leq \CE_{N+1}(V_0)+C(\ell-1) \int_0^t(1+s)^{\ell-2}\|\widetilde{B}(s)\|^2ds\\
    + C(\ell-1) \int_0^t(1+s)^{\ell-2}\CD_{N+2}(V(s)))ds,
\end{multline*}
and
\begin{equation*}
  \CE_{N+2}(V(t))+  \la \int_0^t \CD_{N+2}(V(s))ds\leq  \CE_{N+2}(V_0).
\end{equation*}
Then, for $1<\ell<2$, it follows by iterating the above estimates that
\begin{multline}\label{ief.p1}
    (1+t)^{\ell}\CE_{N}(V(t))+\la \int_0^t (1+s)^{\ell}\CD_{N}(V(s))ds\\
    \leq C\CE_{N+2}(V_0)+C\int_0^t(1+s)^{\ell-1}\|\widetilde{B}(s)\|^2ds.
\end{multline}

On the other hand, to estimate the integral term on the r.h.s. of \eqref{ief.p1}, let us define
\begin{equation}\label{def.e.infty}
    \CE_{N,\infty}(V(t))=\sup_{0\leq s\leq t}(1+s)^{\frac{3}{2}} \CE_{N}(V(s)).
\end{equation}
Then, we have the following

\begin{lemma}\label{lem.B2.bdd}
For any $t\geq 0$, it holds that
\begin{equation}\label{lem.B2.bdd.1}
    \|\widetilde{B}(t)\|^2\leq C(1+t)^{-\frac{3}{2}}\left(\|[v_0,\widetilde{E}_0,\widetilde{B}_0]\|_{L^1\cap \dot{H}^2}^2+[\CE_{N,\infty}(V(t))]^2\right).
\end{equation}
\end{lemma}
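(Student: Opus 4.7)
The plan is to exploit the mild formulation \eqref{mildf} of the equivalent system \eqref{s.cr}, combined with the sharp linear decay for the magnetic component supplied by Corollary \ref{cor.lhs.re}. Since $\widetilde{B}(t,x)=\frac{1}{\sqrt{\ga}}B(t/\sqrt{\ga},x)$, it suffices to prove the analogous bound for $B$ and then change time variable; this loses only a harmless constant factor in the decay rate because $(1+t/\sqrt{\ga})^{-3/2}\sim (1+t)^{-3/2}$.

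First, I would apply the Duhamel representation \eqref{mildf} to the $B$-component. By the third line of \eqref{cor.linear.s.1}, the homogeneous piece $e^{tL}U_0$ satisfies
\begin{equation*}
\|[e^{tL}U_0]_B\|\leq C(1+t)^{-3/4}\|[u_0,E_0,B_0]\|_{L^1\cap \dot{H}^2},
\end{equation*}
which contributes exactly the first ingredient on the right-hand side of \eqref{lem.B2.bdd.1}. For the Duhamel integral, I would apply the same linear estimate to the source $[g_1(s),g_2(s),g_3(s),0]$; note that because of the zero in the last slot and because only $g_2,g_3$ enter the $B$-component bound in \eqref{cor.linear.s.1}, the relevant quantity is $\|[g_2(s),g_3(s),0]\|_{L^1\cap \dot{H}^2}$.

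Next, I would bound the nonlinear source in $L^1\cap \dot{H}^2$ by the full instant energy. Recalling \eqref{mildf.s}, each term in $g_2$ and $g_3$ is quadratic in $U=[\rho,u,E,B]$ (after Taylor-expanding $(1+\rho)^{\ga-2}-1$ and using $\|\rho\|_N\leq 1$). Hölder in $L^1$ and Leibniz plus Sobolev embedding in $\dot{H}^2$, all absorbed by $H^N$ with $N\geq 4$, give
\begin{equation*}
\|[g_2(s),g_3(s),0]\|_{L^1\cap \dot{H}^2}\leq C\|U(s)\|_N^2\leq C\CE_N(V(s)).
\end{equation*}
Using the equivalence $U\leftrightarrow V$ from \eqref{trans.1} together with the very definition \eqref{def.e.infty}, one has $\CE_N(V(s))\leq (1+s)^{-3/2}\CE_{N,\infty}(V(t))$ for $0\leq s\leq t$.

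Putting these together yields
\begin{equation*}
\|B(t)\|\leq C(1+t)^{-3/4}\|[u_0,E_0,B_0]\|_{L^1\cap \dot{H}^2}+C\CE_{N,\infty}(V(t))\int_0^t (1+t-s)^{-3/4}(1+s)^{-3/2}ds.
\end{equation*}
The standard convolution estimate $\int_0^t(1+t-s)^{-3/4}(1+s)^{-3/2}ds\leq C(1+t)^{-3/4}$ (splitting $[0,t/2]$ and $[t/2,t]$) converts this into $\|B(t)\|\leq C(1+t)^{-3/4}\bigl(\|[u_0,E_0,B_0]\|_{L^1\cap\dot{H}^2}+\CE_{N,\infty}(V(t))\bigr)$, and squaring with $(a+b)^2\leq 2a^2+2b^2$ delivers \eqref{lem.B2.bdd.1} for $B$; the passage to $\widetilde{B}$ via the scaling \eqref{trans.1} completes the argument. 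The main obstacle I expect is the careful verification that the source term truly lies in $L^1\cap \dot{H}^2$ and satisfies the compatibility condition \eqref{lhs.cc} required to apply the linear operator $e^{(t-s)L}$; the compatibility follows from direct inspection of \eqref{s.cr} since $\pa_t(\na\cdot E+\rho)=0$ along smooth solutions, so the source $[g_1,g_2,g_3,0]$ is automatically consistent with the constraint.
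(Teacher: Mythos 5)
Your proposal is correct and follows essentially the same route as the paper: Duhamel's formula \eqref{mildf}, the linear $L^1\cap\dot H^2\to L^2$ decay for the $B$-component from Corollary \ref{cor.lhs.re}, the quadratic bound $\|[g_2,g_3]\|_{L^1\cap\dot H^2}\leq C\CE_N(U(s))\leq C(1+s)^{-3/2}\CE_{N,\infty}(V(t))$, the standard convolution estimate, and the rescaling \eqref{trans.1} back to $\widetilde B$. The only quibbles are cosmetic (the $B$-estimate is the fourth, not third, line of \eqref{cor.linear.s.1}, and the compatibility of the source is most directly seen from $\na\cdot g_3=\na\cdot(\rho u)=-g_1$).
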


\begin{proof}
Apply the fourth linear estimate on $B$ in \eqref{cor.linear.s.1} to the mild form \eqref{mildf} so that
\begin{multline}
\label{lem.B2.bdd.p1}
  \|B(t)\| \leq  C(1+t)^{-\frac{3}{4}}\|[u_0,E_0,B_0]\|_{L^1\cap \dot{H}^2}\\
+C\int_0^t (1+s)^{-\frac{3}{4}} \|[g_2(s),g_3(s)]\|_{L^1\cap \dot{H}^2}ds .
\end{multline}
Recall the definition \eqref{mildf.s} of $g_2$ and $g_3$. It is straightforward to verify that for any $0\leq s\leq t$,
\begin{equation*}
   \|[g_2(s),g_3(s)]\|_{L^1\cap \dot{H}^2} \leq C \CE_N(U(s)).
\end{equation*}
Notice that $\CE_N(U(s))\leq C \CE_N(V(\sqrt{\ga}s))$. From \eqref{def.e.infty}, for any $0\leq s\leq t$,
\begin{equation*}
    \CE_N(V(\sqrt{\ga}s))\leq (1+\sqrt{\ga}s)^{-\frac{3}{2}}\CE_{N,\infty}(V(\sqrt{\ga}t)).
\end{equation*}
Then, it follows that for $0\leq s\leq t$,
\begin{equation*}
   \|[g_2(s),g_3(s)]\|_{L^1\cap \dot{H}^2} \leq C (1+\sqrt{\ga}s)^{-\frac{3}{2}}\CE_{N,\infty}(V(\sqrt{\ga}t)).
\end{equation*}
Putting this into \eqref{lem.B2.bdd.p1} gives
\begin{equation*}
    \|B(t)\|\leq C(1+t)^{-\frac{3}{4}}\big(\|[u_0,E_0,B_0]\|_{L^1\cap \dot{H}^2}+\CE_{N,\infty}(V(\sqrt{\ga}t))\big)
\end{equation*}
which implies \eqref{lem.B2.bdd.1} since $\|\widetilde{B}(t)\|\leq C\|B(t/\sqrt{\ga})\|$ and $[u_0,E_0,B_0]$ is equivalent with $[v_0,\widetilde{E}_0,\widetilde{B}_0]$ up to a positive constant.
This completes the proof of Lemma \ref{lem.B2.bdd}.
\end{proof}

Now, the rest is to prove the uniform-in-time boundedness of $\CE_{N,\infty}(V(t))$ which yields the time-decay rates of the Lyapunov functional $\CE_N(V(t))$ and thus $\|V(t)\|_N^2$. In fact, by taking $\ell=\frac{3}{2}+\eps$ in \eqref{ief.p1} with $\eps>0$ small enough, one has
\begin{multline*}
    (1+t)^{\frac{3}{2}+\eps}\CE_{N}(V(t))+\la \int_0^t (1+s)^{\frac{3}{2}+\eps}\CD_{N}(V(s))ds\\
    \leq C\CE_{N+2}(V_0)+C\int_0^t(1+s)^{\frac{1}{2}+\eps}\|\widetilde{B}(s)\|^2ds.
\end{multline*}
Here, using \eqref{lem.B2.bdd.1} and the fact that $\CE_{N,\infty}(V(t))$ is non-decreasing in $t$, it further holds that
\begin{equation*}
   \int_0^t(1+s)^{\frac{1}{2}+\eps}\|\widetilde{B}(s)\|^2ds
    \leq C(1+t)^\eps \left(\|[v_0,\widetilde{E}_0,\widetilde{B}_0]\|_{L^1\cap \dot{H}^2}^2+[\CE_{N,\infty}(V(t))]^2\right).
\end{equation*}
Therefore, it follows that
\begin{multline*}
 (1+t)^{\frac{3}{2}+\eps}\CE_{N}(V(t))+\la \int_0^t (1+s)^{\frac{3}{2}+\eps}\CD_{N}(V(s))ds\\
    \leq C\CE_{N+2}(V_0) +C(1+t)^\eps \left(\|[v_0,\widetilde{E}_0,\widetilde{B}_0]\|_{L^1\cap \dot{H}^2}^2+[\CE_{N,\infty}(V(t))]^2\right),
\end{multline*}
which implies
\begin{equation*}
    (1+t)^{\frac{3}{2}}\CE_{N}(V(t))\leq  C\left(\CE_{N+2}(V_0)+\|[v_0,\widetilde{E}_0,\widetilde{B}_0]\|_{L^1}^2+[\CE_{N,\infty}(V(t))]^2\right),
\end{equation*}
and thus
\begin{equation*}
    \CE_{N,\infty}(V(t))\leq  C\left(\eps_{N+2}(V_0)^2+[\CE_{N,\infty}(V(t))]^2\right).
\end{equation*}
Here, recall the definition \eqref{def.eps.id} of $\eps_{N+2}(V_0)$. Since $\eps_{N+2}(V_0)>0$ is sufficiently small, $ \CE_{N,\infty}(V(t))\leq  C\eps_{N+2}(V_0)^2$ holds true for any $t\geq 0$, which implies
\begin{equation*}
    \|V(t)\|_N\leq C  \CE_{N}(V(t))^{1/2}\leq C\eps_{N+2}(V_0)(1+t)^{-\frac{3}{4}},
\end{equation*}
that is \eqref{prop.s.decay.1}. This completes the proof of the first part of Proposition \ref{prop.s.decay}.

\subsection{Time rate for the high-order instant energy functional}

In this subsection, we shall continue the proof of Proposition \ref{prop.s.decay} for the second part \eqref{prop.s.decay.2}, that is the time-decay estimate of the high-order energy $\|\na V(t)\|_{N-1}^2$. In fact, it can reduce to the time-decay estimates only on $\|\na \widetilde{B}\|$ and $\|\na^N[\widetilde{E},\widetilde{B}]\|$ by the following lemma.

\begin{lemma}\label{lem.ief.h}
Let $V=[\si,v,\widetilde{E},\widetilde{B}]$ be the solution to the Cauchy problem \eqref{s.e}-\eqref{s.e.id} with initial data $V_0=[\si_0,v_0,\widetilde{E}_0,\widetilde{B}_0]$ satisfying \eqref{s.e.cc} in the sense of Proposition \ref{prop.s.exi}. Then, if $\CE_N(V_0)$ is sufficiently small, there are the high-order instant energy functional $\CE_N^{\rm h}(\cdot)$ and the corresponding dissipation rate $\CD_N^{\rm h}(\cdot)$ such that
\begin{equation}\label{lem.ief.h.1}
    \frac{d}{dt}\CE_{N}^{\rm h}(V(t))+\la \CD_N^{\rm h}(V(t))\leq C\|\na \widetilde{B}\|^2,
\end{equation}
holds for any $t\geq 0$.
\end{lemma}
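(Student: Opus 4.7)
The argument is the higher-order analog of Theorem~\ref{thm.ap}. Since $\CD_N^{\rm h}(V)$ omits the $L^2$ norms of $\si,v,\widetilde E,\widetilde B$, I would rerun the four-step energy scheme used there but perform every $L^2$ estimate only at derivative level $|\al|\ge 1$, so that none of the troublesome $L^2$ contributions is ever generated.

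For Step~1, applying $\pa^\al$ to \eqref{s.e} for $1\le|\al|\le N$ and repeating the calculations leading to \eqref{thm.ap.p02}--\eqref{thm.ap.p02.1} yields
$$ \frac{1}{2}\frac{d}{dt}\|\na V\|_{N-1}^2+\frac{1}{\sqrt{\ga}}\|\na v\|_{N-1}^2\le C\CE_N(V)^{1/2}\CD_N^{\rm h}(V). $$
For Steps~2 and~3, I would introduce the high-order interactive functionals
$$\CE_{N,1}^{{\rm h,int}}(V)=\sum_{1\le|\al|\le N-1}\lng\pa^\al v,\pa^\al\na\si\rng,\quad \CE_{N,2}^{{\rm h,int}}(V)=\sum_{1\le|\al|\le N-1}\lng\pa^\al v,\pa^\al\widetilde E\rng,$$
which, by the derivations of \eqref{thm.ap.p03} and \eqref{thm.ap.p08} restricted to $|\al|\ge 1$, deliver the dissipations $\la\|\na\si\|_{N-1}^2$ and $\la\|\na\widetilde E\|_{N-2}^2$, up to a $C\|\na v\|_{N-1}^2$ absorbed by Step~1 and a cross term $C\|\na v\|_{N-1}\|\na\widetilde B\|_{N-2}$ handled in the final combination.

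The key and most delicate is Step~4, where I use the truncated functional
$$\CE_{N,3}^{{\rm h,int}}(V)=\sum_{1\le|\al|\le N-2}\lng\na\times\pa^\al\widetilde E,\pa^\al\widetilde B\rng.$$
Rerunning the derivation of \eqref{thm.ap.p09}, the Cauchy-Schwarz bound on the coupling $\lng\pa^\al v,\na\times\pa^\al\widetilde B\rng$ now only costs $C\|\pa^\al v\|^2\subset C\CD_N^{\rm h}(V)$ (since $|\al|\ge 1$) plus a small fraction of $\|\na\pa^\al\widetilde B\|^2$; the $\|v\|^2$ and $\|\widetilde E\|^2$ terms appearing in the original $\CE_{N,3}^{\rm int}$-estimate are never produced. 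The recovered dissipation is, however,
$$\sum_{1\le|\al|\le N-2}\|\na\pa^\al\widetilde B\|^2=\|\na\widetilde B\|_{N-2}^2-\|\na\widetilde B\|^2,$$
the full desired $\|\na\widetilde B\|_{N-2}^2$ minus exactly the single piece $\|\na\widetilde B\|^2$.

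Setting $\CE_N^{\rm h}(V)=\|\na V\|_{N-1}^2+\sum_{i=1}^3\ka_i\CE_{N,i}^{{\rm h,int}}(V)$ with $0<\ka_3\ll\ka_2\ll\ka_1\ll 1$ and $\ka_2^{3/2}\ll\ka_3$, and treating the $\ka_2$ cross term by the same Cauchy-Schwarz splitting as in Step~5 of Theorem~\ref{thm.ap}, summation of Steps~1--4 with weights $1,\ka_1,\ka_2,\ka_3$ produces $\frac{d}{dt}\CE_N^{\rm h}(V)+\la\bigl(\CD_N^{\rm h}(V)-\|\na\widetilde B\|^2\bigr)$ on the left, modulo cubic remainders absorbed via the smallness of $\CE_N(V(t))$ from Proposition~\ref{prop.s.exi}. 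Moving the missing $\la\|\na\widetilde B\|^2$ to the right then yields \eqref{lem.ief.h.1}. The genuine obstacle is precisely Step~4: one cannot recover $\widetilde B$-dissipation without either including the $|\al|=0$ mode (which produces $\|v\|^2$ or $\|\widetilde E\|^2$, both outside $\CD_N^{\rm h}$) or forfeiting the lowest piece $\|\na\widetilde B\|^2$ of the $\widetilde B$-dissipation; choosing the latter is exactly what leaves the residual $C\|\na\widetilde B\|^2$ on the right of the lemma, to be controlled in the next stage by the linear $L^1$-$L^2$ time decay of $B$ via a time-weighted iteration.
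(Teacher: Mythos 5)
Your proposal is correct and follows essentially the same route as the paper: the paper likewise reruns the four energy steps of Theorem~\ref{thm.ap} restricted to derivatives $|\al|\ge 1$, defines $\CE_N^{\rm h}$ as $\|\na V\|_{N-1}^2$ plus the three truncated interactive functionals with weights $0<\ka_3\ll\ka_2\ll\ka_1\ll 1$, $\ka_2^{3/2}\ll\ka_3$, and recovers only $\|\na^2\widetilde B\|_{N-3}^2$ in the fourth step, leaving the residual $C\|\na\widetilde B\|^2$ on the right of \eqref{lem.ief.h.1}. Your identification of the $|\al|=0$ mode of the $\widetilde E$--$\widetilde B$ functional as the source of that residual matches the paper's construction exactly.
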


\begin{proof}
It can be done by modifying the proof of Theorem \ref{thm.ap} a little. In fact, by letting the energy estimates made only on the high-order derivatives, then corresponding to \eqref{thm.ap.p01}, \eqref{thm.ap.p03}, \eqref{thm.ap.p08}, and \eqref{thm.ap.p09}, it can be re-verified that
\begin{equation*}
    \frac{1}{2}\frac{d}{dt}\|\na V\|_{N-1}^2+\frac{1}{\sqrt{\ga}}\|\na v\|_{N-1}^2\leq C\|V\|_N\|\na [\si,v]\|_{N-1}^2,
\end{equation*}
\begin{equation*}
% \nonumber to remove numbering (before each equation)
\frac{d}{dt}\sum_{1\leq |\al|\leq N-1}\langle \pa^\al v, \pa^\al \na \si\rangle+\la \|\na \si\|_{N-1}^2\\
\leq C\|\na^2 v\|_{N-2}^2+\|V\|_N^2\|\na[\si,v]\|_{N-1}^2,
\end{equation*}
\begin{multline*}
% \nonumber to remove numbering (before each equation)
\frac{d}{dt}\sum_{1\leq |\al|\leq N-1}\langle \pa^\al v, \pa^\al\widetilde{E}\rangle+\la \|\na\widetilde{E}\|_{N-2}^2\\
\leq C\|\na v\|_{N-1}^2+C\|\na^2 v\|_{N-2}\|\na \widetilde{B}\|_{N-2}+\|V\|_N^2\|\na[\si,v]\|_{N-1}^2,
\end{multline*}
and
\begin{multline*}
% \nonumber to remove numbering (before each equation)
\frac{d}{dt}\sum_{1\leq |\al|\leq N-2}\langle \na\times \pa^\al \widetilde{E}, \pa^\al\widetilde{B}\rangle+\la \|\na^2\widetilde{B}\|_{N-3}^2\\
\leq C\|\na^2\widetilde{E}\|_{N-3}^2+C\|\na v\|_{N-3}^2+\|V\|_N^2\|\na[\si,v]\|_{N-1}^2.
\end{multline*}
Here, the details of proof are omitted for simplicity. Now, in the similar way as in \eqref{def.energy}, let us define
\begin{multline}\label{def.energy.h}
% \nonumber to remove numbering (before each equation)
\CE_{N}^{\rm h}(V(t))   = \|\na V\|_{N-1}^2+\kappa_1\sum_{1\leq |\al|\leq N-1}\langle \pa^\al v, \pa^\al \na \si\rangle\\
+\kappa_2\sum_{1\leq |\al|\leq N-1}\langle \pa^\al v, \pa^\al\widetilde{E}\rangle
+\kappa_3\sum_{1\leq |\al|\leq N-2}\langle \na\times \pa^\al \widetilde{E}, \pa^\al\widetilde{B}\rangle.
\end{multline}
Similarly, one can choose $0<\kappa_3\ll \kappa_2\ll\kappa_1\ll 1$ with $\kappa_2^{3/2}\ll \kappa_3$ such that $\CE_{N}^{\rm h}(V(t))\sim \|\na V(t)\|_{N-1}^2$, that is, $\CE_{N}^{\rm h}(\cdot)$ is indeed a high-order instant energy functional satisfying \eqref{def.ef.h}, and furthermore, the linear combination of the previously obtained four estimates with coefficients corresponding to \eqref{def.energy.h} yields \eqref{lem.ief.h.1} with $ \CD_N^{\rm h}(\cdot)$ defined in \eqref{def.ef.h.d}. This completes the proof of Lemma \ref{lem.ief.h}.
\end{proof}

By comparing \eqref{def.ef.h} and \eqref{def.ef.h.d} for the definitions of $ \CE_N^{\rm h}(\cdot)$ and $ \CD_N^{\rm h}(\cdot)$, it follows from \eqref{lem.ief.h.1} that
\begin{equation*}
    \frac{d}{dt}\CE_{N}^{\rm h}(V(t))+\la \CE_{N}^{\rm h}(V(t))\leq C(\|\na \widetilde{B}\|^2+\|\na^N[\widetilde{E},\widetilde{B}]\|^2),
\end{equation*}
which implies
\begin{multline}\label{decay.hef.p1}
\CE_{N}^{\rm h}(V(t))\leq \CE_{N}^{\rm h}(V_0)e^{-\la t}\\
+C\int_0^t e^{-\la (t-s)}(\|\na \widetilde{B}(s)\|^2+\|\na^N[\widetilde{E}(s),\widetilde{B}(s)]\|^2)ds.
\end{multline}
To estimate the time integral term on the r.h.s. of the above inequality, one has

\begin{lemma}\label{lem.hef.source}
Let $V=[\si,v,\widetilde{E},\widetilde{B}]$ be the solution to the Cauchy problem \eqref{s.e}-\eqref{s.e.id} with initial data $V_0=[\si_0,v_0,\widetilde{E}_0,\widetilde{B}_0]$ satisfying \eqref{s.e.cc} in the sense of Proposition \ref{prop.s.exi}. If $\eps_{N+6}(V_0)>0$ is sufficiently small, where $\eps_{N+6}(V_0)$ is defined in \eqref{def.eps.id}, then
\begin{equation}\label{lem.hef.source.1}
    \|\na \widetilde{B}(t)\|^2+\|\na^N[\widetilde{E}(t),\widetilde{B}(t)]\|^2\leq C\eps_{N+6}(V_0)^2(1+t)^{-\frac{5}{2}}
\end{equation}
for any $t\geq 0$.
\end{lemma}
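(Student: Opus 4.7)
The idea is to insert the Duhamel formula \eqref{mildf} into the linear decay estimates of Corollary \ref{cor.lhs.re} and close the resulting inequality using the already-proved first part \eqref{prop.s.decay.1} of Proposition \ref{prop.s.decay}, \emph{crucially applied at an elevated regularity level}. Since the rescaling \eqref{trans.1} only produces harmless multiplicative constants involving $\sqrt{\ga}$, it suffices to establish the analogous bound
$$\|\na B(t)\|^2+\|\na^N[E(t),B(t)]\|^2 \leq C\eps_{N+6}(V_0)^2(1+t)^{-5/2}$$
for the solution $U=[\rho,u,E,B]$ to \eqref{s.cr}.

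First I would apply the last two estimates of \eqref{cor.linear.s.3} to \eqref{mildf} to get
$$
\|\na B(t)\|\leq C(1+t)^{-5/4}\|U_0\|_{L^1\cap\dot H^4}+C\int_0^t(1+t-s)^{-5/4}\|[g_2,g_3](s)\|_{L^1\cap\dot H^4}\,ds,
$$
and likewise with $(L^1,\dot H^4)$ replaced by $(L^2,\dot H^{N+3})$ for $\|\na^N[E,B](t)\|$. Using the quadratic form of $g_2,g_3$ from \eqref{mildf.s}, Cauchy--Schwarz, the embedding $H^2\hookrightarrow L^\infty$, and the Moser product estimate $\|fh\|_{\dot H^s}\leq C(\|f\|_{L^\infty}\|h\|_{\dot H^s}+\|h\|_{L^\infty}\|f\|_{\dot H^s})$, the source terms split as
$$
\|[g_2,g_3]\|_{L^1\cap L^2}\leq C\|V\|_N^2, \qquad \|[g_2,g_3]\|_{\dot H^4\cap\dot H^{N+3}}\leq C\|V\|_N\,\|V\|_{N+4},
$$
where the second bound loses one derivative to the lower factor and one to the product rule on $\na\rho$, $\na u$.

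The key move is to invoke \eqref{prop.s.decay.1} not only at level $N$, giving $\|V\|_N\leq C\eps_{N+2}(1+s)^{-3/4}$, but also at the elevated level $N+4$, which is legitimate precisely because $\eps_{N+6}(V_0)$ is small by hypothesis, yielding $\|V\|_{N+4}\leq C\eps_{N+6}(1+s)^{-3/4}$. Together these give
$$
\|[g_2,g_3](s)\|_{L^1\cap L^2\cap\dot H^4\cap\dot H^{N+3}}\leq C\eps_{N+6}(V_0)^2(1+s)^{-3/2}.
$$
Splitting the Duhamel integral at $s=t/2$ and using $5/4, 3/2>1$ I would then get the standard bound $\int_0^t(1+t-s)^{-5/4}(1+s)^{-3/2}\,ds\leq C(1+t)^{-5/4}$. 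Combining with $\|U_0\|_{L^1\cap\dot H^4}+\|U_0\|_{L^2\cap\dot H^{N+3}}\leq C\eps_{N+6}(V_0)$, squaring, and undoing the rescaling \eqref{trans.1} yields \eqref{lem.hef.source.1}.

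The main obstacle is keeping the top-order source norm $\|[g_2,g_3]\|_{\dot H^{N+3}}$ at the full rate $(1+s)^{-3/2}$: a naive bound that uses only the uniform-in-time control $\|V\|_{N+6}\lesssim\eps_{N+6}$ from Proposition \ref{prop.s.exi} on the high-derivative factor in the Moser splitting would lose a $(1+s)^{3/4}$ factor, after which the Duhamel integral would fail to close at rate $(1+t)^{-5/4}$. The four extra derivatives between $N$ and $N+4$ precisely match the one derivative in $\na u$-type products plus the derivatives needed by the Moser/Sobolev step, which is exactly why the integrability hypothesis must be $\eps_{N+6}(V_0)$ and cannot be relaxed with the present argument.
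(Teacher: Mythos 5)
Your proposal is correct and follows essentially the same route as the paper: apply the linear estimates \eqref{cor.linear.s.3} to the Duhamel formula \eqref{mildf}, bound the quadratic sources $\|[g_2,g_3]\|_{L^1\cap\dot H^4}$ and $\|[g_2,g_3]\|_{L^2\cap\dot H^{N+3}}$ by $C\|U\|_{N+4}^2\lesssim \eps_{N+6}(V_0)^2(1+s)^{-3/2}$ using the first part of Proposition \ref{prop.s.decay} at the elevated level $N+4$ (which is exactly the paper's use of the $\eps_{N+6}$ hypothesis), and close the convolution integral at rate $(1+t)^{-5/4}$ before undoing the rescaling. Your identification of why the regularity index must be $N+6$ matches the paper's reasoning, and your convolution kernel $(1+t-s)^{-5/4}$ is the correct form.
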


\noindent For this time, suppose that the above lemma is true. Then, by using \eqref{lem.hef.source.1} in \eqref{decay.hef.p1}, one has
\begin{equation*}
 \CE_{N}^{\rm h}(V(t))\leq    \CE_{N}^{\rm h}(V_0)e^{-\la t}+C\eps_{N+6}(V_0)^2(1+t)^{-\frac{5}{2}}.
\end{equation*}
Since $ \CE_{N}^{\rm h}(V(t))\sim \|\na V(t)\|_{N-1}^2$ holds true for any $t\geq 0$, \eqref{prop.s.decay.2} follows. This also completes the proof of Proposition \ref{prop.s.decay}.
The rest is devoted to

\medskip

\noindent{\bf Proof of Lemma \ref{lem.hef.source}:}
Suppose that  $\eps_{N+6}(V_0)>0$ is sufficiently small. Notice that, by the first part of Proposition \ref{prop.s.decay},
\begin{equation*}
    \|V(t)\|_{N+4}\leq  C\eps_{N+6}(V_0)(1+t)^{-\frac{3}{4}},
\end{equation*}
which further implies from \eqref{trans.1} that for $U=[\rho,u,E,B]$,
\begin{equation}\label{lem.hef.source.p1}
    \|U(t)\|_{N+4}\leq  C\eps_{N+6}(V_0)(1+t)^{-\frac{3}{4}}.
\end{equation}
Similarly to obtain \eqref{lem.B2.bdd.p1}, one can apply the linear estimate \eqref{cor.linear.s.3} to the mild form \eqref{mildf} of the solution $U(t)$ so that
\begin{multline}
\label{lem.hef.source.p2}
  \|\na B(t)\| \leq  C(1+t)^{-\frac{5}{4}}\|[u_0,E_0,B_0]\|_{L^1\cap \dot{H}^4}\\
+C\int_0^t (1+s)^{-\frac{5}{4}} \|[g_2(s),g_3(s)]\|_{L^1\cap \dot{H}^4}ds,
\end{multline}
and
\begin{multline}
\label{lem.hef.source.p3}
  \|\na^N [E(t),B(t)]\| \leq  C(1+t)^{-\frac{5}{4}}\|[u_0,E_0,B_0]\|_{L^2\cap \dot{H}^{N+3}}\\
+C\int_0^t (1+s)^{-\frac{5}{4}} \|[g_2(s),g_3(s)]\|_{L^2\cap \dot{H}^{N+3}}ds.
\end{multline}
Recalling the definition \eqref{mildf.s} of $g_2$ and $g_3$, it is straightforward to check that
\begin{eqnarray*}
% \nonumber to remove numbering (before each equation)
&&\|[g_2(t),g_3(t)]\|_{L^1\cap \dot{H}^4}\leq C\|U(t)\|_{\max\{5,N\}}^2,\\
&& \|[g_2(t),g_3(t)]\|_{L^2\cap \dot{H}^{N+3}}\leq C\|U(t)\|_{N+4}^2.
\end{eqnarray*}
The above estimate together with \eqref{lem.hef.source.p1} give
\begin{equation*}
   \|[g_2(t),g_3(t)]\|_{L^1\cap \dot{H}^4}+ \|[g_2(t),g_3(t)]\|_{L^2\cap \dot{H}^{N+3}}\leq C\eps_{N+6}(V_0)^2(1+t)^{-\frac{3}{2}}.
\end{equation*}
Then, it follows from \eqref{lem.hef.source.p2} and \eqref{lem.hef.source.p3} that
\begin{equation*}
  \|\na B(t)\|+  \|\na^N [E(t),B(t)]\|\leq    C\eps_{N+6}(V_0)(1+t)^{-\frac{5}{4}},
\end{equation*}
where the smallness of $\eps_{N+6}(V_0)$ was used. This implies \eqref{lem.hef.source.1} by the definition \eqref{trans.1} of $\widetilde{E}$ and $\widetilde{B}$. The proof of Lemma \ref{lem.hef.source} is complete.

\subsection{Time rate in $L^q$}

In this subsection we shall prove Proposition \ref{prop.s.decay} for the time-decay rates of solutions $V=[\si,v,\widetilde{E},\widetilde{B}]$ in $L^q$ with $2\leq q\leq \infty$ to the Cauchy problem \eqref{s.e}-\eqref{s.e.id}. To prove \eqref{prop.s.Lq.1}, \eqref{prop.s.Lq.2} and \eqref{prop.s.Lq.3}, due to Proposition \ref{prop.s.exi} and the transform \eqref{trans.1},  it equivalently suffices to consider the same estimates on $U=[\rho,u,E,B]$ which is the solution to the other reformulated Cauchy problem \eqref{s.cr}-\eqref{s.cr.id}. Throughout this subsection, we suppose that $\eps_{13}(V_0)>0$ is sufficiently small. In addition, for $N\geq 4$, Proposition \ref{prop.s.decay} shows that if $\eps_{N+2}(V_0)>0$ is sufficiently small,
\begin{eqnarray}
% \nonumber to remove numbering (before each equation)
\|U(t)\|_{N}\leq C\eps_{N+2}(V_0)(1+t)^{-\frac{3}{4}},\label{rateq.p1}
\end{eqnarray}
and if $\eps_{N+6}(V_0)>0$ is sufficiently small,
\begin{eqnarray}\label{rateq.p2}
\|\na U(t)\|_{N-1}\leq C\eps_{N+6}(V_0)(1+t)^{-\frac{5}{4}}.
\end{eqnarray}
Now, we begin with estimates on $B$, $[u,E]$ and $\rho$ in turn as follows.

\medskip

\noindent{\it Estimate on $\|B\|_{L^q}$.}  For $L^2$ rate, it is easy to see from \eqref{rateq.p1} that
\begin{equation*}
    \|B(t)\|\leq C\eps_{6}(V_0)(1+t)^{-\frac{3}{4}}.
\end{equation*}
For $L^\infty$ rate, by applying the $L^\infty$ linear estimate on $B$ in \eqref{cor.linear.s.2} to the mild form \eqref{mildf}, one has
\begin{multline*}
% \nonumber to remove numbering (before each equation)
\|B(t)\|_{L^\infty}\leq C(1+t)^{-\frac{3}{2}}\|[u_0,E_0,B_0]\|_{L^1\cap \dot{H}^5}\\
+C\int_0^t(1+t-s)^{-\frac{3}{2}} \|[g_2(s),g_3(s)]\|_{L^1\cap \dot{H}^5}ds.
\end{multline*}
Since by \eqref{rateq.p1},
\begin{equation*}
 \|[g_2(t),g_3(t)]\|_{L^1\cap \dot{H}^5}\leq C\|U(t)\|_6^2 \leq C\eps_{8}(V_0)^2(1+t)^{-\frac{3}{2}},
\end{equation*}
it follows that
\begin{equation*}
 \|B(t)\|_{L^\infty}\leq C\eps_{8}(V_0)(1+t)^{-\frac{3}{2}}.
\end{equation*}
So, by $L^2$-$L^\infty$ interpolation,
\begin{equation}\label{rateq.p.rho}
    \|B(t)\|_{L^q}\leq C\eps_{8}(V_0)(1+t)^{-\frac{3}{2}+\frac{3}{2q}}
\end{equation}
for $2\leq q\leq \infty$.

\medskip

\noindent{\it Estimate on $\|[u,E]\|_{L^q}$.} For $L^2$ rate, applying the $L^2$ linear estimates on $u$ and $E$ in \eqref{cor.linear.s.1} to \eqref{mildf}, one has
\begin{multline*}
% \nonumber to remove numbering (before each equation)
\|u(t)\|\leq C(1+t)^{-\frac{5}{4}}(\|\rho_0\|+\|[u_0,E_0,B_0]\|_{L^1\cap \dot{H}^2})\\
+C\int_0^t(1+t-s)^{-\frac{5}{4}}(\|g_1(s)\|+ \|[g_2(s),g_3(s)]\|_{L^1\cap \dot{H}^2})ds,
\end{multline*}
and
\begin{multline*}
% \nonumber to remove numbering (before each equation)
\|E(t)\|\leq C(1+t)^{-\frac{5}{4}}\|[u_0,E_0,B_0]\|_{L^1\cap \dot{H}^3}\\
+C\int_0^t(1+t-s)^{-\frac{5}{4}} \|[g_2(s),g_3(s)]\|_{L^1\cap \dot{H}^3}ds.
\end{multline*}
Since by \eqref{rateq.p1},
\begin{equation*}
\|g_1(t)\|+\|[g_2(t),g_3(t)]\|_{L^1\cap {H}^3}\leq C\|U(t)\|_4^2 \leq C\eps_{6}(V_0)^2(1+t)^{-\frac{3}{2}},
\end{equation*}
it follows that
\begin{equation*}
    \|[u(t),E(t)]\|\leq C\eps_{6}(V_0)(1+t)^{-\frac{5}{4}}.
\end{equation*}
For $L^\infty$ rate, applying the $L^\infty$ linear estimates on $u$ and $E$ in \eqref{cor.linear.s.2} to \eqref{mildf}, one has
\begin{multline*}
% \nonumber to remove numbering (before each equation)
\|u(t)\|_{L^\infty}\leq C(1+t)^{-2}(\|\rho_0\|_{L^2\cap\dot{H}^2}+\|[u_0,E_0,B_0]\|_{L^1\cap \dot{H}^5})\\
+C\int_0^t(1+t-s)^{-2}(\|g_1(s)\|_{L^2\cap\dot{H}^2}+ \|[g_2(s),g_3(s)]\|_{L^1\cap \dot{H}^5})ds,
\end{multline*}
and
\begin{multline*}
% \nonumber to remove numbering (before each equation)
\|E(t)\|_{L^\infty}\leq C(1+t)^{-2}\|[u_0,E_0,B_0]\|_{L^1\cap \dot{H}^6}\\
+C\int_0^t(1+t-s)^{-2}\|[g_2(s),g_3(s)]\|_{L^1\cap \dot{H}^6}ds,
\end{multline*}
Since
\begin{equation*}
\|g_1(t)\|_{L^2\cap \dot{H}^2}+ \|[g_2(t),g_3(t)]\|_{\dot{H}^5\cap \dot{H}^6}
\leq C\|\na U(t)\|_6^2\leq  C\eps_{13}(V_0)^2(1+t)^{-\frac{5}{2}}
\end{equation*}
and
\begin{multline*}
 \|[g_2(t),g_3(t)]\|_{L^1}\leq C\|U(t)\|(\|u(t)\|+\|\na U(t)\|)\\
 \leq C  \left[\eps_{6}(V_0)(1+t)^{-\frac{3}{4}}\right]\cdot \left[\eps_{10}(V_0)(1+t)^{-\frac{5}{4}}\right]
 \leq C \eps_{10}(V_0)^2(1+t)^{-2}
\end{multline*}
where  \eqref{rateq.p1}, \eqref{rateq.p2} and \eqref{rateq.p3} were used, then it follows that
\begin{equation*}
    \|[u(t),E(t)]\|_{L^\infty}\leq C\eps_{13}(V_0)(1+t)^{-2}.
\end{equation*}
Therefore, by $L^2$-$L^\infty$ interpolation,
\begin{equation}\label{rateq.p3}
    \|[u(t),E(t)]\|_{L^q}\leq C\eps_{13}(V_0)(1+t)^{-2+\frac{3}{2q}}
\end{equation}
for $2\leq q\leq \infty$.

\medskip

\noindent{\it Estimate on $\|\rho\|_{L^q}$.} For $L^2$ rate, we need to bootstrap once. First, applying the $L^2$ linear estimates on $\rho$ in \eqref{cor.linear.s.1} to \eqref{mildf}, one has
\begin{equation}\label{rateq.p4}
% \nonumber to remove numbering (before each equation)
\|\rho(t)\|\leq Ce^{-\frac{t}{2}}\|[\rho_0,u_0]\|
+C\int_0^te^{-\frac{t-s}{2}}\|[g_1(s),g_2(s)]\|ds.
\end{equation}
Due to
\begin{equation*}
  \|[g_1(t),g_2(t)] \|\leq C(\|\na U(t)\|_1^2+\|u(t)\|\cdot\|B(t)\|_{L^\infty})\leq C\eps_{10}(V_0)^2(1+t)^{-\frac{5}{2}}
\end{equation*}
where  \eqref{rateq.p2},  \eqref{rateq.p.rho} and \eqref{rateq.p3} were used, then \eqref{rateq.p4} gives the slower time-decay estimate
\begin{equation*}
  \|\rho(t)\|\leq C \eps_{10}(V_0)(1+t)^{-\frac{5}{2}}.
\end{equation*}
By further re-estimating $ \|[g_1,g_2] \|$ and using  \eqref{rateq.p2},  \eqref{rateq.p.rho}, \eqref{rateq.p3} once again and the above slower time-decay estimate to obtain
\begin{multline*}
  \|[g_1(t),g_2(t)] \|\leq C\|u(t)\|_{L^\infty}(\|\na \rho(t)\|+\|\na u(t)\|+\|B(t)\|)\\
  +C\|\rho(t)\|
  (\|\na \rho(t)\|_2+\|\na u(t)\|_2)
  \leq C\eps_{13}(V_0)^2(1+t)^{-\frac{11}{4}},
\end{multline*}
it follows from \eqref{rateq.p4} that
\begin{equation*}
  \|\rho(t)\|\leq C \eps_{13}(V_0)(1+t)^{-\frac{11}{4}}.
\end{equation*}
For $L^\infty$ rate,  by applying the $L^\infty$ linear estimates on $\rho$ in \eqref{cor.linear.s.2} to \eqref{mildf},
\begin{equation}\label{rateq.p5}
% \nonumber to remove numbering (before each equation)
\|\rho(t)\|_{L^\infty}\leq Ce^{-\frac{t}{2}}\|[\rho_0,u_0]\|_{L^2\cap \dot{H}^2}
+C\int_0^te^{-\frac{t-s}{2}}\|[g_1(s),g_2(s)]\|_{L^2\cap \dot{H}^2}ds.
\end{equation}
Notice that one can check
\begin{multline}\label{rateq.p6}
\|[g_1(t),g_2(t)]\|_{ \dot{H}^2}\leq C\|\na U(t)\|_4(\|\rho(t)\|+\|[u(t),B(t)]\|_{L^\infty}\\
+\|\na[\rho(t),u(t)]\|_{L^\infty}).
\end{multline}
Here, since the linear time-decay rate of $\|\na[\rho(t),u(t)]\|_{L^\infty}$ is larger than $3/2$ and the nonhomogeneous source is at least quadratically nonlinear, we have the following slower time-decay estimate
\begin{equation*}
 \|\na[\rho(t),u(t)]\|_{L^\infty}\leq C  \eps_{8}(V_0)(1+t)^{-\frac{3}{2}}.
\end{equation*}
Then, it follows from \eqref{rateq.p6} that
\begin{equation*}
\|[g_1(t),g_2(t)]\|_{L^2\cap \dot{H}^2}\leq C \eps_{13}(V_0)(1+t)^{-\frac{11}{4}},
\end{equation*}
which implies from \eqref{rateq.p5} that
\begin{equation*}
% \nonumber to remove numbering (before each equation)
\|\rho(t)\|_{L^\infty}\leq C \eps_{13}(V_0)(1+t)^{-\frac{11}{4}}.
\end{equation*}
Therefore, by $L^2$-$L^\infty$ interpolation,
\begin{equation}\label{rateq.p7}
% \nonumber to remove numbering (before each equation)
\|\rho(t)\|_{L^q}\leq C \eps_{13}(V_0)(1+t)^{-\frac{11}{4}}
\end{equation}
for $2\leq q\leq \infty$.

\medskip

Thus, \eqref{rateq.p7}, \eqref{rateq.p3} and \eqref{rateq.p.rho} give \eqref{prop.s.Lq.1}, \eqref{prop.s.Lq.2} and \eqref{prop.s.Lq.3}, respectively. This completes the proof of Proposition \ref{prop.s.Lq} and hence Theorem \ref{thm.s.o}.

\section*{Acknowledgments}

This work is supported by the Direct Grant 2010/2011 in CUHK. The author also acknowledges the financial support from RICAM, Austrian Academy of Sciences when this work was done there in the early of 2010. The author would like to thank Professor Shuichi Kawashima for sending to him on this September some recent work \cite{IHK,IK,UWK}  about the investigation of the system with regularity-loss property.


\begin{thebibliography}{99}\small

\bibitem{BCD}
C. Besse, J. Claudel, P. Degond, et al., A model hierarchy for ionospheric plasma modeling, {\it Math. Models Methods Appl. Sci.} {\bf 14} (2004) 393--415.

\bibitem{BMP}
Y. Brenier, N. Mauser and M. Puel, Incompressible Euler and $e$-MHD as scaling limits of the Vlasov-Maxwell system, {\it Commun. Math. Sci.} {\bf 1} (2003) 437--447.

\bibitem{CT}
D. Chae and E. Tadmor, On the finite time blow-up of the Euler-Poisson equations in $\R^2$, {\it Commun. Math. Sci.} {\bf 6} (2008) 785--789.

\bibitem{CJW}
G.Q. Chen, J.W. Jerome and  D.H. Wang, Compressible Euler-Maxwell equations, {\it Transp. Theory Statist. Phys.} {\bf 29} (2000) 311--331.


\bibitem{DLYY}
Y. Deng, T.-P. Liu, T. Yang and Z.-A. Yao, Solutions of Euler-Poisson equations for gaseous stars, {\it Arch. Ration. Mech. Anal.}  {\bf 164} (2002) 261--285.



\bibitem{D-Hypo}
R.-J. Duan, Hypocoercivity of linear degenerately dissipative kinetic equations, preprint, arXiv:0912.1733 (2010).


\bibitem{DS-VMB}
R.-J. Duan and R.M. Strain, Optimal large-time behavior of the Vlasov-Maxwell-Boltzmann system in the whole space, preprint, arXiv:1006.3605v1 (2010).

\bibitem{DUY}
R.-J. Duan, S. Ukai and T. Yang, A combination of energy method and spectral analysis for study of equations of gas motion, {\it Front. Math. China}  {\bf 4}  (2009) 253--282.

\bibitem{Guo}
Y. Guo, Smooth irrotational flows in the large to the Euler-Poisson system in ${\bf R}^{3+1}$, {\it Comm. Math. Phys.} {\bf 195} (1998) 249--265.

\bibitem{HZ}
D. Hoff and K. Zumbrun, Multi-dimensional diffusion waves for the Navier-Stokes equations of compressible flow, {\it Indiana Univ. Math. J.} {\bf 44} (1995) 603--676.


\bibitem{IHK}
K. Ide, K. Haramoto and S. Kawashima, Decay property of regularity-loss type
for dissipative Timoshenko system, {\it Math. Models Meth. Appl. Sci.} {\bf 18} (2008)
647--667.

\bibitem{IK}
K. Ide and S. Kawashima, Decay property of regularity-loss type and nonlinear effects for dissipative Timoshenko system, {\it Math. Models Meth. Appl. Sci.} {\bf 18} (2008)
1001--1025.

\bibitem{Je}
J.W. Jerome, The Cauchy problem for compressible hydrodynamic-Maxwell systems: A local theory for smooth solutions, {\it Differential Integral Equations} {\bf 16} (2003) 1345--1368.

\bibitem{Ka}
S. Kawashima, Systems of a hyperbolic-parabolic composite type, with applications to the equations of magnetohydrodynamics, Ph.D. Thesis, Kyoto Univ., 1983.

\bibitem{Kato}
T. Kato, The Cauchy problem for quasi-linear symmetric hyperbolic systems, {\it Arch. Rational Mech. Anal.}  {\bf 58}  (1975) 181--205.

\bibitem{LNX}
T. Luo, R. Natalini and Z. Xin, Large time behavior of the solutions to a hydrodynamic model for semiconductors, {\it SIAM J. Appl. Math.} {\bf 59} (1999) 810--830.

\bibitem{LS}
T. Luo and J. Smoller, Existence and non-linear stability of rotating star solutions of the compressible Euler-Poisson equations, {\it Arch. Ration. Mech. Anal.}  {\bf 191}  (2009) 447--496.


\bibitem{MRS}
P.A. Markowich, C. Ringhofer and C. Schmeiser, {\it Semiconductor Equations}, Springer, 1990.

\bibitem{PW}
Y.J. Peng and S. Wang, Convergence of compressible Euler-Maxwell equations to incompressible Euler equations, {\it Comm. Partial Differential Equations} {\bf 33} (2008) 349--376.

\bibitem{RG}
H. Rishbeth and O.K. Garriott, {\it Introduction to Ionospheric
Physics}. Academic Press, 1969.


\bibitem{Stein}
E.M. Stein, {\it Singular Integrals and Differentiability Properties
of Functions}, Princeton Mathematical Series, No. {\bf 30} Princeton
University Press, Princeton, N.J. 1970 xiv+290 pp.

\bibitem{Ta}
M.E. Taylor, {\it Partial Differential Equations, I. Basic Theory}, Springer, New York, 1996.

\bibitem{UWK}
Y. Ueda, S. Wang and S. Kawashima, Dissipative structure of the regularity-loss type and time asymptotic decay of solutions for the Euler-Maxwell system, preprint (2010).

\bibitem{Vi}
C. Villani, Hypocoercivity, {\it Memoirs Amer. Math. Soc.} {\bf 202}  (2009).


\end{thebibliography}
\end{document}